\newtheorem{theorem}{Theorem}[section]
\newtheorem{proposition}[theorem]{Proposition}
\newtheorem{lemma}[theorem]{Lemma}
\theoremstyle{definition}
\newtheorem{remark}[theorem]{Remark}
\DeclareMathOperator{\dv}{div}
\DeclareMathOperator{\tr}{tr}
\DeclareMathOperator{\diam}{diam}
\DeclareMathOperator{\dist}{dist}
\DeclareMathOperator{\interior}{int}
\DeclareMathOperator{\cl}{clos}
\newcommand{\R}{{\mathbb R}}
\newcommand{\C}{{\mathbb C}}
\newcommand{\e}{\epsilon}
\DeclareMathOperator{\im}{\mathfrak{Im}}
\DeclareMathOperator{\re}{\mathfrak{Re}}
\title{On $C^1$ regularity for  degenerate elliptic equations in the plane}
\author{Thibault Lacombe\footnote{Institut de Math\'ematiques de Toulouse, UMR 5219, Universit\'e de Toulouse, CNRS, UPS
IMT, 31062 Toulouse Cedex 9, France. \tt{thibault.lacombe@math.univ-toulouse.fr}}
 \and 
Xavier Lamy\footnote{Institut de Math\'ematiques de Toulouse, UMR 5219, Universit\'e de Toulouse, CNRS, UPS
IMT, 31062 Toulouse Cedex 9, France. \tt{xavier.lamy@math.univ-toulouse.fr}}
}
\begin{document}

\maketitle

\begin{abstract}
We show that Lipschitz solutions 
$u$ of 
$\mathrm{div}\, G(\nabla u)=0$ 
in $B_1\subset\R^2$
are $C^1$, 
for strictly monotone vector fields $G\in C^0(\mathbb R^2;\mathbb R^2)$
satisfying a mild ellipticity condition.
If $G=\nabla F$ for a strictly convex function $F$,
and $0\leq \lambda(\xi)\leq \Lambda(\xi)$ 
are
 the two eigenvalues of $\nabla^2 F(\xi)$,
our 
 assumption
 is that the set
$\lbrace\lambda=0\rbrace
\cap \lbrace \Lambda=\infty\rbrace$,
 where ellipticity degenerates
 \emph{both} from below and from above,
is finite.
This extends results by De Silva and Savin
(Duke Math. J. 151, No. 3, p.487-532, 2010), 
which assumed either that set empty, or the larger
 set $\lbrace \lambda=0\rbrace$  finite.
Our main new input is to
transfer estimates in 
$\lbrace \lambda > 0 \rbrace $ to estimates in $\lbrace \Lambda <\infty\rbrace$
 by means of a conjugate equation.
When $G$ is not a gradient, 
the ellipticity assumption needs to be interpreted in a specific way, 
and we highlight the nontrivial effect of the antisymmetric part of $\nabla G$.
\end{abstract}

\section{Introduction}

We consider solutions $u\colon B_1\subset\R^2\to \R$ of the 
nonlinear
 equation
\begin{equation}
\label{2i}
\dv G(\nabla u) = 0
\qquad
 \text{ in } B_1,
\end{equation}  
where the vector field $G : \mathbb{R}^2 \to \mathbb{R}^2$ is strictly monotone, that is,
\[ 
\langle G(\xi) - G(\zeta), \xi - \zeta \rangle > 0 \quad \forall \xi \neq \zeta \in \mathbb{R}^2.
\] 
Sufficient conditions on $G$ ensuring local Lipschitz regularity of weak solutions
 can be found in \cite{EMT} 
(see also the survey \cite{MingioneDarkSide}, 
 the recent works \cite{BBJ16,BB16,BBLV18,BB20,BBL24},
and counterexamples in \cite{CT24,ACFJKM} when integrability properties of $\nabla u$ are not good enough).
Here, as in  \cite{DSS,BB18,LR18,CMCE}, we focus on conditions
 ensuring that Lipschitz solutions are $C^1$.

 It is known since the works of Morrey and Nirenberg \cite{morrey38,nirenberg53} 
 (see \cite[Chapter~12]{GT}) 
that Lipschitz solutions of \eqref{2i} are 
$C^{1,\alpha}$ for some $\alpha\in (0,1)$
 (and smooth if $G$ is smooth) if $G$ is \emph{uniformly elliptic}, that is,
 \begin{align*}
 \lambda\leq \nabla^s G \leq \Lambda
 \qquad\text{for some }0 < \lambda < \Lambda < +\infty\, ,
 \end{align*} 
where $\nabla^s G=(\nabla G +\nabla G^T)/2$ denotes the symmetric gradient of $G$.

In this two-dimensional setting, 
the antisymmetric part of $\nabla G$ does not play any role in the uniform ellipticity condition, 
because the regularity can be inferred from a priori estimate 
for the
equation
$\tr(A \nabla^2 u)=0$, 
with $A=\nabla^s G(\nabla u)$.
In higher dimensions, the estimates of de Giorgi, Nash and Moser 
also provide $C^{1,\alpha}$ regularity for \eqref{2i}
 under the uniform ellipticity assumption
 $\lambda \leq \nabla^s G \leq |\nabla G|\leq \Lambda$, 
 which  does  require the antisymmetric part of $\nabla G$ to be bounded.

For a general strictly monotone $G$, 
ellipticity may degenerate from below: $\lambda = 0$; or   above: $\Lambda = +\infty$. 
Specific types of degeneracy have been studied extensively, including the fundamental case of the $p$-Laplacian $G=\nabla |\cdot|^p$ for $1<p<\infty$, 
but the general setting still raises several open questions, see \cite{CM} for  a recent survey.
Very degenerate equations, where the field $G$ is not strictly monotone, have also attracted recent  attention \cite{SV,CF14a,CF14b,Lledos}.


\subsection{The variational case $G=\nabla F$}

One
feature of the present work
is to highlight the nontrivial effect of the antisymmetric part of $\nabla G$, 
which played no role in the
two-dimensional
 uniformly elliptic case. 

We focus first on the case where this effect is absent, that is, $\nabla G$ is symmetric.
Then we have $G=\nabla F$ for some strictly convex function $F$. Lipschitz solutions of
\begin{equation}
\label{eq3i}
\dv \nabla F(\nabla u)  = 0 \text{ in } B_1\,,
\end{equation} 
 are minimizers of the energy $\int F(\nabla u)\, dx$, hence the term \emph{variational}.

Following \cite{DSS}, it is natural to separate regions where ellipticity degenerates: $\lambda=0$, or becomes singular: $\Lambda=+\infty$. 
Accordingly we set,
leaving regularity considerations and rigorous definitions 
aside for the moment,
\begin{align*}
\mathcal D
&
=
\left\lbrace
\xi\in\R^2\colon
\text{`` } \nabla^2 F (\xi) \text{ has an eigenvalue equal to }0 
\text{ ''}
\right\rbrace,
\\
\mathcal S
&
=
\left\lbrace
\xi\in\R^2\colon
\text{`` } \nabla^2 F (\xi) \text{ has an eigenvalue equal to } +\infty
\text{ ''} \right\rbrace.
\end{align*}

In \cite{DSS}, 
de Silva and Savin show that $M$-Lipschitz solutions of \eqref{eq3i},
with $F$ strictly convex,
 are $C^1$, provided one of the two following conditions is satisfied:
\begin{itemize}
\item $\overline B_M\cap \mathcal D\cap \mathcal S $ is empty: ellipticity is not lost simultaneously from below and above \cite[Theorem~1.1]{DSS};
\item $ \overline B_M\cap \mathcal D$ is finite: ellipticity is lost from below at most at a finite number of values \cite[Theorem~1.2]{DSS}.
\end{itemize}
A remarkable feature of \cite[Theorem~1.2]{DSS} is that it requires no control from above on the ellipticity.
It is natural to wonder about a counterpart involving only $\mathcal S$.
As a consequence of our main result we obtain 
$C^1$ regularity under
 an even less restrictive condition,
  generalizing
   both 
\cite[Theorem~1.1 \& 1.2]{DSS}.

\begin{theorem}\label{t:var}
If $F \colon \R^2 \to \R $ is $C^1$ and strictly convex, 
and 
\begin{align*}
\overline{B}_M \cap \mathcal{D} \cap \mathcal{S}\text{ is finite},
\end{align*}
 then any Lipschitz solution $u$ of $\dv \nabla F(\nabla u)=0$ in $B_1$
  with $|\nabla u|\leq M$ is $C^1$. 
\end{theorem}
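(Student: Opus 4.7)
My plan is to combine the two theorems of De Silva--Savin with the two-dimensional stream-function duality announced in the abstract.

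\textbf{Conjugate equation.} Since $\dv \nabla F(\nabla u)=0$ in the simply connected ball $B_1$, there exists a Lipschitz stream function $v\colon B_1\to\R$ with $J\nabla v=\nabla F(\nabla u)$, where $J$ denotes the $\pi/2$-rotation in $\R^2$. Because $F$ is strictly convex and $C^1$, $\nabla F$ is a homeomorphism from $\overline B_M$ onto its image, and Legendre duality gives $\nabla u=\nabla F^*(J\nabla v)$. The identity $\curl\nabla u=0$ then translates into the dual equation
$\dv\nabla\tilde F(\nabla v)=0$, with $\tilde F(y):=F^*(Jy)$.
A direct calculation shows that the eigenvalues of $\nabla^2\tilde F$ at $y=J^T\nabla F(\xi)$ are $1/\Lambda(\xi)$ and $1/\lambda(\xi)$. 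Consequently, under the homeomorphism $\xi\mapsto J^T\nabla F(\xi)$ one has $\tilde{\mathcal D}\leftrightarrow\mathcal S$ and $\tilde{\mathcal S}\leftrightarrow\mathcal D$, so $\tilde{\mathcal D}\cap\tilde{\mathcal S}$ is again finite.

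\textbf{Combining with DSS.} Applied to $v$ instead of $u$, \cite[Theorem~1.2]{DSS} yields a dual regularity statement: if $\mathcal S$ is finite, then $v\in C^1$, hence $\nabla u=\nabla F^*(J\nabla v)$ is continuous (since $\nabla F^*$ is continuous on the compact set $\nabla F(\overline B_M)$). Together with \cite[Theorem~1.2]{DSS} itself, this already covers the cases when either $\mathcal D$ or $\mathcal S$ is finite. To reach the weaker hypothesis that only $\mathcal D\cap\mathcal S$ is finite, I would fix $x_0\in B_1$ and study the cluster set $K(x_0)\subset\overline B_M$ of approximate values of $\nabla u$ at $x_0$. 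A local-in-gradient-space version of \cite[Theorem~1.1]{DSS} handles cluster values lying outside $\mathcal D\cap\mathcal S$ (where ellipticity degenerates on at most one side). The task is then to rule out the possibility that $K(x_0)$ contains a bad value $\xi_i\in \mathcal D\cap\mathcal S$ \emph{together} with some other cluster point.

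\textbf{The hard part.} Near such $\xi_i$, both degeneracies coexist but only at the single point $\xi_i$, so any small motion in gradient space restores either $\lambda>0$ or $\Lambda<\infty$. The key new input is to transfer DSS-type oscillation-improvement estimates for $u$ in the region $\{\lambda>0\}$ (where DSS's techniques work directly) into analogous estimates for $u$ in the region $\{\Lambda<\infty\}$ through the conjugate equation (where those same techniques now apply to $v$ in the region $\{\tilde\lambda>0\}$), and then to alternate the two dual bounds in order to force $K(x_0)=\{\xi_i\}$. The main obstacle is quantitative: $\nabla F$ (resp.\ $\nabla F^*$) fails to be Lipschitz on $\mathcal S$ (resp.\ $\mathcal D$), so modulus-of-continuity estimates do not survive the transformation $\nabla u\leftrightarrow \nabla v$ at the bad points themselves. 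One must therefore design oscillation estimates that exploit the isolation of $\xi_i$ in $\mathcal D\cap\mathcal S$ and remain stable under these singular transformations, which is precisely where the finiteness hypothesis enters the iteration.
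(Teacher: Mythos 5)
You have correctly identified the key new idea: the conjugate equation $\dv\nabla\tilde F(\nabla v)=0$ with $\tilde F(y)=F^*(Jy)$, and the resulting exchange $\tilde{\mathcal D}\leftrightarrow\mathcal S$, $\tilde{\mathcal S}\leftrightarrow\mathcal D$ under $\xi\mapsto J^T\nabla F(\xi)$. This is indeed the mechanism used in the paper (via the dual field $G^*=iG^{-1}(-i\cdot)$). However, your proposal stops short of a proof precisely at the point where the work must be done. The step you label ``the hard part'' --- combining the two DSS theorems to cover the weaker hypothesis that only $\mathcal D\cap\mathcal S$ is finite --- is not carried out, and the cluster-set/alternation strategy you sketch for it is not the right mechanism. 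There is no ``local-in-gradient-space version of DSS Theorem~1.1'' to invoke; the engine of DSS Theorem~1.2 is a \emph{localization lemma}: if $\nabla u(B_1)$ avoids a ball $B_\rho(\xi_0)$ with $B_{4\rho}(\xi_0)\subset O_\lambda$, then at a smaller scale $\nabla u$ localizes either entirely inside $B_{4\rho}(\xi_0)$ or entirely outside $B_{3\rho}(\xi_0)$. The paper's actual route is: (i) prove a \emph{twin localization lemma near $V_\Lambda$} by transferring the $H^1$/Caccioppoli estimate (not a modulus of continuity) through the conjugate equation, and then running the same circle-oscillation argument using the modulus of \emph{monotony} of $G$, which is uniform and survives the transformation; (ii) iterate both localization lemmas exactly as in DSS Theorem~1.2, with $\mathcal D$ replaced throughout by $\mathcal D\cap\mathcal S$; this forces $\nabla u$ into a small neighborhood of the finite set $\mathcal D\cap\mathcal S$, hence into a single small ball by connectedness.

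Your stated obstacle --- that $\nabla F$ and $\nabla F^*$ fail to be Lipschitz at the bad points, so modulus-of-continuity estimates do not survive --- is a red herring under the paper's scheme: the localization lemma is only ever invoked for a ball $B_{4\rho}(\xi_0)$ entirely contained in $O_\lambda$ or $V_\Lambda$, i.e.\ away from the bad set, and the quantity transported through the conjugation is an integral bound on $\nabla G(\nabla u)$, while the pointwise separation of image sets is controlled by $\omega_G$. You also do not address the fact that the a priori estimates (involving $\nabla^2u$) require $G$ smooth and strongly monotone, whereas $F$ is only assumed $C^1$ and strictly convex; the paper closes this gap with an explicit modification of $G$ at infinity and a mollification/approximation argument for both $G$ and $u$ (Lemmas in the Appendix). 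In summary, you have found the right new ingredient but not the argument that makes it yield the conclusion; the iteration of the two localization lemmas and the approximation step are both missing.
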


We describe next a family of examples where Theorem~\ref{t:var} applies, but the results of \cite{DSS} do not.
Consider $F(x,y)=f(x)+g(y)$ where $f,g\colon\R\to\R$ are $C^1$ strictly convex functions.
Denote by $D_f,S_f\subset\R$ the closures of the sets where $f''=0$ or $+\infty$, and similarly for $g$.
Since $D_f\cap S_f=D_g\cap S_g=\emptyset$,
we have
\begin{align*}
\mathcal D
&
=(D_f\times\R )\cup (\R\times D_g),
\\
\mathcal S 
&
=(S_f\times \R)\cup (\R\times S_g),
\\
\mathcal D\cap\mathcal S 
&
=(D_f\times S_g)\cup (S_f\times D_g).
\end{align*}
Hence $\mathcal D$ is infinite as soon as $D_f$ or $D_g$ is non-empty, 
but the intersection $\mathcal D\cap\mathcal S$ is finite and non-empty as soon as $D_f\cup S_f$ and $D_g\cup S_g$ are finite and non-empty.

Note that, in the case of the $p$-Laplacian, ellipticity is degenerate only at the origin: bad values of the gradient are small, which facilitates regularity theory.
Theorem~\ref{t:var} goes well beyond that case, as did already the results of \cite{DSS}.
However, as in \cite{DSS}, our approach is purely two-dimensional.
A counter-example in dimension 4 is given in \cite{CMCE}, 
where it is also conjectured that there should be a counter-example in dimension 3.

\subsection{The general case}

Consider a general $C^0$ strictly monotone vector field $G\colon\R^2\to\R^2$.
In the
 uniformly elliptic case, 
the condition satisfied by $G$ 
constrains only the symmetric part $\nabla^s G$ of the gradient, 
hence a natural generalization of the sets 
$\mathcal D$ and $\mathcal S$ 
 could
 be 
the sets where $\nabla^s G$ has a zero or infinite eigenvalue.
This turns out to be 
wrong for $\mathcal S$, as we explain next.

Note that $G$ is strictly monotone,
 and therefore injective,
  so we may consider its inverse $G^{-1}$.
Then, the (loose) definitions
\begin{align*}
\mathcal D(G)
&
 =
 \left\lbrace \xi\in\R^2\colon
\text{`` } \nabla^s G(\xi)\text{ has an eigenvalue equal to }0
\text{ ''}\right\rbrace\, ,
\\
\mathcal S(G)
&
= 
\left\lbrace \xi\in\R^2\colon 
\text{`` }\nabla^s (G^{-1})(G(\xi))
\text{ has an eigenvalue equal to }0
\text{ ''}\right\rbrace\, ,
\end{align*}
ensure the validity of an exact analog of Theorem~\ref{t:var} in the general strictly monotone setting. 
Before stating it, we give the rigorous versions:
\begin{align}\label{eq:DS}
\begin{aligned}
\mathcal{D}(G) & =
\bigcap_{\lambda>0} \cl 
\left\lbrace \xi \in \mathbb{R}^2 : \liminf_{\vert  \zeta \vert \to 0} \frac{\langle G(\xi +\zeta) - G(\xi), \zeta \rangle}{\vert \zeta \vert^2} \leq\lambda \right\rbrace 
\\
\mathcal{S}(G) & =\bigcap_{\Lambda>0} \cl 
\left\lbrace \xi \in \mathbb{R}^2 : \liminf_{\vert \zeta \vert \to 0} \frac{\left\langle G( \xi +\zeta) - G(\xi), \zeta \right\rangle}{\left\vert G(\xi+ \zeta) - G(\xi) \right\vert^2} \leq
\frac 1\Lambda
\right\rbrace.
\end{aligned}
\end{align}
Here, $\cl A$ denotes the topological closure of $A\subset\R^2$.

\begin{remark}\label{r:defSD}
These definitions make sense without any regularity assumption on $G$.
The quotient appearing in the definition of $\mathcal S(G)$ can be rewritten as
\begin{align*}
\frac{\langle G^{-1}(G(\xi) +\eta) - G^{-1}(G(\xi)), \eta \rangle}{\vert \eta \vert^2},
\quad \text{with }\eta =G(\xi+ \zeta) - G(\xi),
\end{align*}
which explains the previous 
loose
 definition. 
If $G$ is $C^1$, then $\mathcal D(G)$ is the set where $\nabla G$ has a zero eigenvalue, and similarly for $\mathcal S(G)$ if $G^{-1}$ is $C^1$. 
In general, such a pointwise description fails.
For instance,
the inclusion
\begin{align*}
D(G):=\cl \left\lbrace\xi\in\R^2 \colon 
\liminf_{\zeta \to 0} 
\frac{\langle G(\xi+\zeta)-G(\xi),\zeta \rangle}{\vert \zeta \vert^2 } = 0
\right\rbrace
 \subset \mathcal D(G),
\end{align*}
might be strict: consider  $G(x,y)=(g(x),y)$ with $g(x)=\int_0^x f$, where 
 $f(t)=|t|+|\sin(1/t)|$ for $t\neq 0$, then $D(G)=\emptyset$ but $\mathcal D(G)=\lbrace 0\rbrace$.
 \end{remark}

With these definitions, our main result is as follows.

\begin{theorem}\label{t:gen}
If $G \colon \R^2 \to \R^2 $ is $C^0$ and strictly monotone, 
and 
\begin{align*}
\overline{B}_M \cap \mathcal{D}(G) \cap \mathcal{S}(G)\text{ is finite},
\end{align*}
 then any Lipschitz solution $u$ of 
$\dv G(\nabla u)=0$ in $B_1$  with $|\nabla u|\leq M$
  is $C^1$. 
\end{theorem}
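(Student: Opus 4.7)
The plan is to combine the De Silva--Savin scheme of \cite{DSS} with a duality between $u$ and a conjugate stream-function $v$ that allows one to exchange the two ends of the ellipticity spectrum.

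First I would introduce the conjugate equation. Since $\dv G(\nabla u)=0$ on the simply connected ball $B_1$, the Poincar\'e lemma yields a Lipschitz function $v$ with $G(\nabla u)=J\nabla v$, where $J$ is the counterclockwise $90^\circ$ rotation. Inverting gives $\nabla u=G^{-1}(J\nabla v)$, and the requirement that $\nabla u$ be curl-free becomes $\dv\tilde G(\nabla v)=0$ for the new vector field
\[
\tilde G(\eta):=-J\,G^{-1}(J\eta).
\]
A direct monotonicity check shows that $\tilde G$ is again $C^0$ and strictly monotone, and substituting $\eta=-JG(\xi)$ into the definitions \eqref{eq:DS} interchanges the two degenerate sets:
\[
\mathcal D(\tilde G)=-JG\bigl(\mathcal S(G)\bigr),\qquad \mathcal S(\tilde G)=-JG\bigl(\mathcal D(G)\bigr).
\]
In particular, the finite exceptional set of $G$ corresponds to a finite exceptional set of $\tilde G$, and $v$ falls in the same class of problems as $u$.

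Next I would prove that $\nabla u$ is continuous at a fixed $x_0\in B_{1/2}$ by analyzing the compact connected set $K\subset\overline B_M$ of essential cluster values of $\nabla u$ at $x_0$; continuity is equivalent to $K$ being a singleton. If $K\cap\mathcal D(G)=\emptyset$, then near $x_0$ the equation for $u$ is nondegenerate from below and a quantitative version of the argument of \cite[Theorem~1.1]{DSS}---smooth approximation by uniformly monotone fields $G_\varepsilon$, uniform $C^1$ estimates, passage to the limit---applied to $u$ forces $K$ to be a point. If instead $K\cap\mathcal S(G)=\emptyset$, then by the duality above the cluster set of $\nabla v$ at $x_0$ avoids $\mathcal D(\tilde G)$, so the same argument applied to $v$ forces that set to be a point, and this transfers to $K$ via $\nabla u=G^{-1}(J\nabla v)$ and the continuity of $G^{-1}$. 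This is the step realizing the abstract's announced ``transfer of estimates'' from the regime $\lambda>0$ to the regime $\Lambda<\infty$. Finally, if $K$ meets both $\mathcal D(G)$ and $\mathcal S(G)$, then by connectedness $K\cap\mathcal D(G)\cap\mathcal S(G)\ne\emptyset$, so $K$ contains a point of the finite set $\mathcal D(G)\cap\mathcal S(G)\cap\overline B_M$; a connectedness argument applied to $K$ minus such isolated bad points---each of whose punctured neighborhoods in $K$ falls under one of the two preceding cases---reduces $K$ to a single bad value.

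The main obstacle, and the key novelty over the variational setting of \cite{DSS}, is in implementing the transfer step: $\nabla G$ has in general a nontrivial antisymmetric part that does not vanish under conjugation, since $\nabla\tilde G=-J(\nabla G)^{-1}J$ inherits its own antisymmetric piece. The De Silva--Savin a priori estimates must therefore be set up in a form that depends only on bounds for the symmetric part of the linearization---as recalled in the introduction, the two-dimensional linear equation $\Tr(A\nabla^2 u)=0$ with $A=\nabla^s G(\nabla u)$ is controlled using only the symmetric matrix $A$---and I must verify that this robustness is preserved both by the smooth approximation of $G$ and by the passage through the conjugate system.
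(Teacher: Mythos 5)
Your duality step is correct and is essentially the paper's conjugate vector field $G^*$ (up to sign conventions), and you have rightly identified it as the mechanism that transfers estimates from the $\lambda>0$ regime to the $\Lambda<\infty$ regime. But the overall scheme built around the cluster set $K$ at a point has genuine gaps.

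The first problem is the claim in your third case that ``by connectedness $K\cap\mathcal D(G)\cap\mathcal S(G)\neq\emptyset$.'' This is false: a connected compact set can meet two closed sets without meeting their intersection (take a curve that travels from $\mathcal D\setminus\mathcal S$ through the good region $\R^2\setminus(\mathcal D\cup\mathcal S)$ to $\mathcal S\setminus\mathcal D$). This is not a minor slip; it is precisely the situation the theorem has to rule out, and the assertion is circular --- you are assuming the cluster set cannot wander through a region of mixed degeneracy, which is exactly what must be proved. The follow-up ``connectedness argument applied to $K$ minus such isolated bad points'' is then unsupported, because cases (a) and (b) as you state them are \emph{global} conclusions (that $K$ is a singleton), not local information about punctured neighborhoods of $K$.

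The missing ingredient, which is the technical heart of the paper's proof, is the \emph{localization lemma} and its iteration over a covering of $\overline B_{2M}$ in the target space. The paper does not case-split on a cluster set; instead it covers $\overline B_{2M}$ by $O_\lambda(G)\cup V_\Lambda(G)\cup\bigcup_j B_r(\xi_j)$, finds a small ball near $\partial B_{2M}$ that $\nabla u$ avoids, and applies a dichotomy lemma (Lemma~\ref{l:locOlambda} in $O_\lambda$, and its new counterpart Lemma~\ref{l:locVLambda} in $V_\Lambda$ obtained by the duality): at a smaller scale $\nabla u$ either localizes completely inside a slightly larger ball (done) or avoids it entirely, in which case the avoidance propagates to overlapping neighbors and the argument iterates. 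The connectedness used there is in the target space and is automatic because the bad set is a finite union of small balls, whose complement in $\overline B_{2M}$ is connected. This iteration is what lets the argument switch back and forth between $O_\lambda$ and $V_\Lambda$ control as $\nabla u$ traverses regions of different degeneracy, and it has no counterpart in your case analysis. The proof of each dichotomy lemma uses the maximum/minimum principle $\partial(\nabla u(B_r))\subset\nabla u(\partial B_r)$ together with an $H^1$ Caccioppoli bound on $\nabla^2 u$ (in $O_\lambda$) or on $\nabla(G(\nabla u))$ (in $V_\Lambda$, via the dual equation), integrated over circles. You also need an explicit approximation scheme (modifying $G$ at infinity, smoothing to strongly monotone $G_\e$, checking stability of $O_\lambda$, $V_\Lambda$, and $\omega_G$ under smoothing) in order to invoke the max/min principle and the $H^1$ estimates, since the max/min principle in the form used requires smooth solutions; the paper devotes an appendix to this.

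Your final paragraph is right that everything must be phrased in terms of $\nabla^s G$ and that the antisymmetric part survives the conjugation, but as written the proposal does not supply the quantitative tools (the two localization lemmas and their iteration) that make the transfer argument go through.
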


Note that, in the uniformly elliptic case $0<\lambda \leq \nabla^s G\leq \Lambda$ we have $\mathcal D(G)=\emptyset$,
so in that case Theorem~\ref{t:gen} also requires no condition on the antisymmetric part of $\nabla G$.
Next we give some further explanations about the role of that antisymmetric part and the set $\mathcal S(G)$.

\subsection{The role of the antisymmetric part of $\nabla G$}

In view of the uniformly elliptic case, the set
\begin{align*}
\widetilde{\mathcal S}(G)= \left\lbrace \xi\in\R^2
\colon \text{`` }\nabla^s G(\xi) \text{ has an eigenvalue equal to }+\infty\text{ ''}\right\rbrace\, ,
\end{align*}
or rather its rigorous version
\begin{align}\label{eq:tildeS}
\tilde{\mathcal{S}}(G) & = 
\bigcap_{\Lambda>0}
\cl \left\lbrace \xi \in \mathbb{R}^2 : \limsup_{\vert \zeta \vert \to 0} \frac{\left\langle G(\xi + \zeta) - G(\xi), \zeta \right\rangle}{\left\vert \zeta \right\vert^2} \geq\Lambda \right\rbrace
\,,
\end{align}
could have been a natural candidate to replace $\mathcal S$ in a 
 generalization of Theorem~\ref{t:var} to the nonvariational setting. 
We 
demonstrate instead
%
that the antisymmetric part of $\nabla G$ actually plays an important role in this degenerate setting. 
Before doing so, 
we observe the following elementary property
 to help compare $\mathcal S$ and $\widetilde{\mathcal S}$.

\begin{proposition}\label{p:StildeS}
For any continuous strictly monotone $G \colon \R^2 \to \R^2 $, we have
the inclusion
$
\widetilde{\mathcal S}(G)
\subset
{\mathcal S}(G)
$,
with equality if $G=\nabla F$.
\end{proposition}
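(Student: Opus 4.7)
For the inclusion $\widetilde{\mathcal S}(G) \subset \mathcal S(G)$, set $a := \langle G(\xi+\zeta) - G(\xi), \zeta \rangle$, $b := |G(\xi+\zeta) - G(\xi)|$, $c := |\zeta|$ for $\xi \in \R^2$ and small $\zeta \neq 0$. Strict monotonicity gives $a > 0$, and Cauchy--Schwarz gives $a \le bc$, i.e.\ $(a/b^2)(a/c^2) \le 1$. Hence, if $\zeta_n \to 0$ is a sequence with $a_n/c_n^2 \to L \ge \Lambda$, then along the same sequence $a_n/b_n^2 \le 1/(a_n/c_n^2) \to 1/L \le 1/\Lambda$, so $\liminf_{|\zeta|\to 0} a/b^2 \le 1/\Lambda$. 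This shows, for each fixed $\Lambda > 0$, the inclusion of inner sets $\{\limsup a/c^2 \ge \Lambda\} \subset \{\liminf a/b^2 \le 1/\Lambda\}$; passing to closures and intersecting over $\Lambda > 0$ yields $\widetilde{\mathcal S}(G) \subset \mathcal S(G)$.

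For the equality in the gradient case $G = \nabla F$, the plan is to combine Legendre duality with Aleksandrov's theorem. Let $F^*$ denote the Legendre conjugate, so that $H := G^{-1} = \nabla F^*$ is continuous and strictly monotone. The substitution $p = G(\xi)$, $\eta = G(\xi+\zeta) - G(\xi)$ is a local homeomorphism near zero, with inverse $\zeta = H(p+\eta) - H(p)$, and a direct computation shows that $a/b^2$ becomes $\langle H(p+\eta) - H(p), \eta\rangle/|\eta|^2$ while $a/c^2$ becomes $\langle H(p+\eta) - H(p), \eta\rangle/|H(p+\eta) - H(p)|^2$; that is, the roles of $|\zeta|^2$ and $|G(\xi+\zeta) - G(\xi)|^2$ in the denominators get swapped when expressed via $H$. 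Appealing to Aleksandrov's theorem, $F$ is twice differentiable at almost every $\xi \in \R^2$, with symmetric nonnegative Hessian $\nabla^2 F(\xi)$ whose largest eigenvalue we denote by $\beta(\xi) \in (0, +\infty]$; a second-order expansion along the corresponding eigenvector yields
\[
\liminf_{|\zeta|\to 0}\frac{a}{b^2} = \frac{1}{\beta(\xi)},
\qquad
\limsup_{|\zeta|\to 0}\frac{a}{c^2} = \beta(\xi).
\]
Thus at each Aleksandrov point the inner sets of $\mathcal S(G)$ and $\widetilde{\mathcal S}(G)$ at level $\Lambda$ both coincide with $\{\beta(\xi) \ge \Lambda\}$; since Aleksandrov points are dense, taking closures and intersecting over $\Lambda > 0$ yields $\mathcal S(\nabla F) = \widetilde{\mathcal S}(\nabla F)$.

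The main obstacle is the passage to closures at non-Aleksandrov points: a non-Aleksandrov $\xi$ in $\bigcap_{\Lambda>0} \cl\{\liminf a/b^2 \le 1/\Lambda\}$ must be shown to lie also in $\bigcap_{\Lambda>0} \cl\{\limsup a/c^2 \ge \Lambda\}$. Equivalently, for each $\Lambda > 0$ one must produce Aleksandrov points $\xi' \to \xi$ with $\beta(\xi') \ge \Lambda$, which relies on a semicontinuity-type property of $\beta$ on the Aleksandrov set guaranteed by the convexity of $F$. While the density of Aleksandrov points is classical, this compatibility with closures is where the argument becomes technically most delicate.
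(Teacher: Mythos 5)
Your Cauchy--Schwarz argument for the inclusion $\widetilde{\mathcal S}(G) \subset \mathcal S(G)$ is correct and is the same as the paper's. The proof of the reverse inclusion when $G=\nabla F$ contains a genuine gap, which you yourself flag, and that gap is precisely where the substance of the statement lies.

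You correctly reduce to showing: if $\xi_0 \notin \widetilde{\mathcal S}(\nabla F)$, so that $\limsup_{\zeta\to 0} a/c^2 \le \Lambda$ on some ball $B_{3r}(\xi_0)$, then $\xi_0 \notin \mathcal S(\nabla F)$. Cauchy--Schwarz only gives $(a/b^2)(a/c^2)\le 1$, which turns a uniform \emph{upper} bound on $a/c^2$ into a \emph{lower} bound on $c^2/a$, not into the needed lower bound $\liminf a/b^2 \ge 1/\Lambda'$; so pointwise estimates without further structure cannot close the argument. Your plan via Aleksandrov's theorem identifies $\liminf a/b^2 = 1/\beta$ and $\limsup a/c^2 = \beta$ at twice-differentiability points, but the sequence $\xi_n\to\xi_0$ provided by $\xi_0\in\cl A_\Lambda$ is not known to consist of Aleksandrov points, and the semicontinuity statement for $\beta$ that you would need to replace $\xi_n$ by nearby Aleksandrov points with $\beta\ge\Lambda$ is exactly what you do not prove. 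As it stands, the argument is incomplete.

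The paper avoids Aleksandrov entirely. From the uniform bound $\limsup_{\zeta\to 0}a/c^2\le\Lambda$ on $B_{3r}(\xi_0)$, the first step is to upgrade it to a \emph{finite-difference} bound $\langle\nabla F(\xi+\zeta)-\nabla F(\xi),\zeta\rangle\le\Lambda|\zeta|^2$ for all $\xi\in B_{2r}(\xi_0)$ and $\zeta\in B_r$: the monotone function $t\mapsto\langle\nabla F(\xi+t\zeta),\zeta\rangle$ has right Dini derivative bounded by $\Lambda|\zeta|^2$ everywhere, hence is absolutely continuous with $0\le f'\le\Lambda|\zeta|^2$. This finite-difference bound is stable under mollification, giving $0\le\nabla^2 F_\e\le\Lambda$ on $B_r(\xi_0)$ for small $\e$. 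For a symmetric matrix $0\le A\le\Lambda$ one has $|A\zeta|^2\le\Lambda\langle A\zeta,\zeta\rangle$; integrating this along the segment $[\xi,\xi+\zeta]$ and applying Jensen's inequality yields $|\nabla F_\e(\xi+\zeta)-\nabla F_\e(\xi)|^2\le\Lambda\langle\nabla F_\e(\xi+\zeta)-\nabla F_\e(\xi),\zeta\rangle$, and sending $\e\to 0$ shows $\xi_0\in V_\Lambda(\nabla F)$, i.e.\ $\xi_0\notin\mathcal S(\nabla F)$. This mollification--Jensen mechanism is the missing ingredient in your plan; the Legendre duality and Aleksandrov facts you invoke are true but do not by themselves bridge the closure issue.
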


This 
implies
 in particular 
that Theorem~\ref{t:gen} reduces exactly to Theorem~\ref{t:var} when $G=\nabla F$.

Note that the inclusion $\widetilde{\mathcal S}(G)\subset \mathcal S(G)$ can be strict.
For instance, the strictly monotone field
 $G_0(x,y)=(x^3-y,x+y)$
 satisfies
  $\mathcal S(G_0) =\R\times \lbrace 0\rbrace $ and $\widetilde {\mathcal S}(G_0)= \emptyset $.
  This elementary example 
   is however not so interesting from the standpoint of equation \eqref{2i}, because its antisymmetric contribution is linear, and therefore disappears in \eqref{2i}.

Note also that the set $\mathcal S(G)$ can be non-empty even when $G$ is uniformly elliptic:
for any non-zero antisymmetric matrix $M$ of operator norm $\|M\|\leq 1/2$,
the field $G_M(\xi)=\xi +\ln(|\xi|)M\xi$ satisfies $1/2\leq \nabla^s G_M\leq 3/2$,
but  the antisymmetric part of $\nabla G_M $ blows up at the origin, and $\mathcal S(G_M)=\lbrace 0\rbrace$.

We  provide here 
a more involved example which shows that Theorem~\ref{t:gen} 
is false with $\mathcal S(G)$ replaced by $\widetilde{\mathcal S}(G)$.

\begin{theorem}\label{t:ex}
There exists a $C^0$ strictly monotone vector field $G \colon \R^2 \to \R^2$, with $\mathcal{D}(G)\cap \mathcal{\tilde{S}}(G)$ finite, and a Lipschitz solution of \eqref{2i} which is not $C^1$.
\end{theorem}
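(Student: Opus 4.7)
The goal is to construct a continuous strictly monotone field $G$ and a Lipschitz non-$C^1$ function $u$ with $\dv G(\nabla u)=0$ weakly, such that $\mathcal D(G)\cap \widetilde{\mathcal S}(G)$ is finite (indeed empty). The guiding idea is to exploit the freedom in the antisymmetric part of $\nabla G$ to keep $\nabla^s G$ bounded but degenerate along an entire curve: boundedness forces $\widetilde{\mathcal S}(G)=\emptyset$, while the associated degeneracy is exactly what makes the true set $\mathcal S(G)$ nontrivial and allows non-$C^1$ solutions. This is the mechanism already visible in the elementary example $G_0(x,y)=(x^3-y,x+y)$, which must now be pushed to the nonlinear regime.

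I would use the one-homogeneous ansatz $u(x,y) = r f(\theta)$, with $f \in C^\infty(S^1)$ nonconstant and $f+f''>0$. Then $\nabla u(x) = \xi(\theta):=f(\theta)\hat{r} + f'(\theta)\hat{\theta}$ depends only on $\theta$ and parametrizes a smooth simple closed curve $\Gamma \subset \R^2$; since $\xi$ is nonconstant, $u$ is not $C^1$ at the origin. Writing $G(\nabla u(\theta)) = \alpha(\theta)\hat{r}+\beta(\theta)\hat{\theta}$, a polar-coordinate computation (using $dx=r\,dr\,d\theta$ and integrating by parts in the weak formulation) reduces the PDE $\dv G(\nabla u)=0$ to the single scalar identity $\alpha = -\beta'$ along $\Gamma$. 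Choosing $\beta\in C^\infty(S^1)$ with $\beta+\beta''$ of constant sign, one finds $\partial_\theta G|_\Gamma = -(\beta+\beta'')\hat{r}$, orthogonal to the tangent $(f+f'')\hat{\theta}$ of $\Gamma$; equivalently, in the local basis $(\hat{r},\hat{\theta})$,
\[
\nabla G(\xi(\theta))\,\hat{\theta} \,=\, -c(\theta)\,\hat{r}, \qquad c(\theta) := \frac{\beta+\beta''}{f+f''}.
\]

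I would then extend $G$ off $\Gamma$ in a tubular neighborhood by prescribing the transversal derivative, e.g.\ $G(\xi(\theta)+s\hat{r}(\theta)) = (\alpha(\theta)+s\,a(\theta))\hat{r}(\theta) + \beta(\theta)\hat{\theta}(\theta) + O(s^2)$ for a smooth bounded $a(\theta)>0$; this yields, on $\Gamma$,
\[
\nabla G|_\Gamma = \begin{pmatrix} a & -c \\ c & 0 \end{pmatrix}, \qquad \nabla^s G|_\Gamma = \begin{pmatrix} a & 0 \\ 0 & 0 \end{pmatrix},
\]
bounded and positive semi-definite but degenerate in the $\hat{\theta}$ direction. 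Outside the tubular neighborhood I would take $G=\nabla F$ for a smooth strongly convex $F$, glued $C^0$ to the interior data along an annulus. Then $\nabla^s G$ is bounded everywhere, so $\widetilde{\mathcal S}(G)=\emptyset$ and $\mathcal D(G)\cap \widetilde{\mathcal S}(G)=\emptyset$ is trivially finite; meanwhile $u$ is a Lipschitz weak solution by the reduction above and fails to be $C^1$ at the origin.

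The main obstacle is verifying that $G$ is globally strictly monotone: because $\nabla^s G$ has a zero eigenvalue at every point of $\Gamma$, positive semi-definiteness does not automatically yield strict monotonicity, and one must use the antisymmetric contribution $\pm c$ together with global injectivity across $\Gamma$. I would handle this by defining the tubular extension via an explicit flow, so that the radial segments $s\mapsto \xi(\theta)+s\hat{r}(\theta)$ foliate a tubular neighborhood of $\Gamma$ and are mapped by $G$ onto a foliation of a neighborhood of $G(\Gamma)$; injectivity then extends globally by the strongly convex matching outside. A complementary consistency check is that for this $G$ the set $\mathcal S(G)$ (not $\widetilde{\mathcal S}(G)$) contains the whole curve $\Gamma$, so that the genuine Theorem~\ref{t:gen} is not violated.
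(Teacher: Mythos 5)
Your approach differs from the paper's (which works on the Beltrami side by constructing a compactly supported strictly $1$-Lipschitz $H$ with $H|_{\mathbb S^1}(z)=z^3/3$ and then pushes forward through Minty's correspondence, so that strict monotonicity of $G$ is automatic), and unfortunately the direct construction you sketch cannot work: the claim $\widetilde{\mathcal S}(G)=\emptyset$ is not achievable within the one-homogeneous ansatz. You correctly flag strict monotonicity as ``the main obstacle,'' but the obstacle is in fact insurmountable, for a reason beyond the injectivity issue you propose to handle by a flow/foliation. Restricting attention to $\xi_1=\xi(\theta_1),\ \xi_2=\xi(\theta_2)\in\Gamma$ and Taylor-expanding $\Psi(\theta+h,\theta):=\langle G(\xi(\theta+h))-G(\xi(\theta)),\xi(\theta+h)-\xi(\theta)\rangle$ using only $\alpha=-\beta'$ (from the PDE) and $\langle G',\xi'\rangle\equiv 0$, one finds that the $h^2$ and $h^3$ coefficients vanish identically, and the $h^4$ coefficient equals $-\tfrac{1}{12}\Phi(\theta)^2\,c'(\theta)$ with $\Phi=f+f''$ and $c=(\beta+\beta'')/(f+f'')$. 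Strict monotonicity forces $c'\leq 0$ on all of $[0,2\pi]$, which by periodicity of $c$ forces $c'\equiv 0$, i.e.\ $c$ constant. But then $\beta+\beta''=c(f+f'')$ integrates (in complex notation $\xi=(f+if')e^{i\theta}$, $G(\xi(\theta))=(-\beta'+i\beta)e^{i\theta}$) to $G(\xi)=ic\,\xi+\text{const}$ on $\Gamma$, an affine map with antisymmetric linear part, so $\langle G(\xi_1)-G(\xi_2),\xi_1-\xi_2\rangle=0$ for $\xi_1,\xi_2\in\Gamma$. This contradicts strict monotonicity outright. So within your ansatz, keeping $c$ (hence $\nabla^s G$) bounded on $\Gamma$ is incompatible with strict monotonicity; the only escape is to let $f+f''$ vanish at some values of $\theta$, where $c$ and $\nabla^s G$ then blow up, producing a nonempty $\widetilde{\mathcal S}(G)$. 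That is exactly what the paper does: there $f(\theta)=-\tfrac13\sin 2\theta$, so $f+f''=\sin 2\theta$ vanishes at $\theta\in\tfrac\pi2\mathbb Z$, yielding $\widetilde{\mathcal S}(G)=F(\{\pm 1,\pm i\})$, finite but nonempty.

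There is also a smaller internal inconsistency: the tubular extension as you wrote it, $G(\xi(\theta)+s\hat r)=(\alpha+sa)\hat r+\beta\hat\theta+O(s^2)$, gives $\nabla G\,\hat r=a\hat r$, so the $(2,1)$ entry is $0$, not $c$; to get your displayed matrix you need the $\hat\theta$ component to be $(\beta+sc)\hat\theta$. That entry is in fact forced to be $c$ by positive semi-definiteness of $\nabla^s G$ (since the $(2,2)$ entry is $0$, the off-diagonal of $\nabla^s G$ must vanish), but this is worth stating rather than prescribing the wrong normal expansion. The consistency check you mention ($\Gamma\subset\mathcal S(G)$) is correct, and your guiding intuition that the antisymmetric part must do the work is right and shared with the paper; but the precise set one can make finite is $\mathcal D\cap\widetilde{\mathcal S}$, not $\widetilde{\mathcal S}$ itself.
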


This example arises from connections with the  Aviles-Giga energy and degenerate differential inclusions \cite{LP,LLP,LLP24}, which partially motivated the present work.
The bad set $\mathcal D\cap \mathcal S$ is infinite, and the non-$C^1$ solution $u$ satisfies $\nabla u\in \mathcal D\cap\mathcal S$ almost everywhere.
In view of this and of Theorem~\ref{t:gen}, 
it seems natural to conjecture that, 
for quite general strictly monotone vector fields $G$ and any Lipschitz solution $u$ of \eqref{2i}, 
the function $x\mapsto \dist(\nabla u(x),\mathcal D\cap\mathcal S)$ is continuous.
Results in that spirit are proved in \cite{SV,CF14a,CF14b,Lledos}
 in different contexts.

In fact, our proof of Theorem~\ref{t:gen} does imply this continuity, 
under the assumption that 
 the complement of 
 any small enough
  neighborhood of $\mathcal D\cap\mathcal S$ is connected 
 (cf Remark~\ref{r:connect}).
But
 in the example of Theorem~\ref{t:ex} the complement of $\mathcal D\cap\mathcal S$ is not connected: additional arguments would be needed to prove the above conjecture.

\subsection{Nonlinear Beltrami equations}

In two dimensions, Minty's correspondence 
between monotone and 1-Lipschitz vector fields
\cite{Minty} 
induces a correspondence between equations of the form \eqref{2i} and nonlinear autonomous
Beltrami equations
\begin{equation}
\label{Beltrami}
f_{\bar{z}} = H(f_z),
\end{equation}
where $f_z=(\partial_x f-i\partial_y f)/2$, $f_{\bar z}=(\partial_x f +i\partial_y f)/2$,
and $H\colon\C\to\C$ is \emph{strictly 1-Lipschitz}, that is,
\begin{equation*}
\vert H(\xi) - H(\zeta) \vert < \vert \xi - \zeta \vert\qquad\forall \xi\neq\zeta\in\C\, .
\end{equation*}
This connection is described and exploited for instance in \cite[Chapter~15]{IMGeom}, \cite[Chapter~16]{AIM}, or \cite{AstalaClop}.
In another instance of degenerate elliptic setting, Minty's correspondence 
also has applications to the regularity of Monge-Ampère equations in the plane \cite{ADPK11,DPGT}.

The nonlinear Beltrami equation \eqref{Beltrami} is
 uniformly
 elliptic if the function $H$ is $k$-Lipschitz for some $k\in (0,1)$, 
and 
the $1$-Lipschitz case allows for 
degenerate ellipticity.
In that setting, we have the following transposition of Theorem~\ref{t:gen}.


\begin{theorem}
\label{t:beltrami}
Let \( H : \mathbb{C} \to \mathbb{C} \) be 
strictly 1-Lipschitz and define
\begin{align*}
&
\Gamma_\pm
=\bigcap_{\lambda>0} \cl \left\lbrace \xi\in \C \colon 
\liminf_{\zeta \to 0} \frac{1 - \vert L_H(\xi,\zeta)\vert^2}{\vert 1 \pm L_H(\xi,\zeta)\vert^2} \leq \lambda
\right\rbrace,
\\
&
\text{where }
L_H(\xi,\zeta)
=\frac{H(\xi+\zeta)-H(\xi)}{\overline{\zeta}}.
\end{align*}
If $\Gamma_+\cap\Gamma_-$ is locally finite, then any Lipschitz solution $f\colon B_1\to\C$ of \eqref{Beltrami} is $C^1$.
\end{theorem}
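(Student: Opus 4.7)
The plan is to reduce Theorem~\ref{t:beltrami} to Theorem~\ref{t:gen} via Minty's correspondence, which pairs strictly 1-Lipschitz maps $H\colon\C\to\C$ with strictly monotone fields $G\colon\R^2\to\R^2$ through a Cayley-type transform. Concretely, for each $a\in\C$ I would set
\[
p(a)=\bar a+H(a),\qquad G(p(a))=\bar a-H(a).
\]
The strict 1-Lipschitz property of $H$ makes $a\mapsto p(a)$ injective; invariance of domain then gives that it is a homeomorphism, with surjectivity following from Minty's theorem applied to the monotone operator $a\mapsto a+\overline{H(a)}$. Writing $\Delta a=a'-a$ and $\Delta H=H(a')-H(a)$, a direct computation yields
\[
\langle G(p(a'))-G(p(a)),\,p(a')-p(a)\rangle=|\Delta a|^2-|\Delta H|^2>0,
\]
so $G$ is strictly monotone and continuous.

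Next I would verify that a Lipschitz map $f=u+iv\colon B_1\to\C$ solves $f_{\bar z}=H(f_z)$ if and only if $u$ is a Lipschitz solution of $\dv G(\nabla u)=0$ and $\nabla v$ is a fixed $\pi/2$-rotation of $G(\nabla u)$. Setting $a=f_z$, a direct identification of real and imaginary parts shows that $u_x+iu_y=\bar a+H(a)=p(a)$, which sets up the bijection and transfers both Lipschitz and $C^1$ regularity between $f$ and $u$; the pointwise bound $|f_z|\le M_0$ translates to $|\nabla u|\le M$ for some $M=M(M_0,H(0))$.

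The core of the proof is the identification of $\Gamma_\pm$ with the divergence-form sets $\mathcal{D}(G)$ and $\mathcal{S}(G)$. With $\zeta_p:=p(a+\Delta a)-p(a)=\overline{\Delta a}+\Delta H$ and $G(p+\zeta_p)-G(p)=\overline{\Delta a}-\Delta H$, one computes
\[
\frac{\langle G(p+\zeta_p)-G(p),\,\zeta_p\rangle}{|\zeta_p|^2}=\frac{1-|L_H(a,\Delta a)|^2}{|1+L_H(a,\Delta a)|^2},
\]
\[
\frac{\langle G(p+\zeta_p)-G(p),\,\zeta_p\rangle}{|G(p+\zeta_p)-G(p)|^2}=\frac{1-|L_H(a,\Delta a)|^2}{|1-L_H(a,\Delta a)|^2}.
\]
Since $p$ is a homeomorphism, $\zeta_p\to 0$ iff $\Delta a\to 0$, so passing to the $\liminf$ and taking closures yields $\Gamma_+=p^{-1}(\mathcal{D}(G))$ and $\Gamma_-=p^{-1}(\mathcal{S}(G))$. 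Because $p^{-1}$ sends bounded sets to bounded sets, local finiteness of $\Gamma_+\cap\Gamma_-$ upgrades to finiteness of $\overline{B}_M\cap\mathcal{D}(G)\cap\mathcal{S}(G)$ for every $M$, and Theorem~\ref{t:gen} yields $u\in C^1$, hence $f\in C^1$.

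The main obstacle, while conceptually minor, will be to verify rigorously that Minty's correspondence is a bijection between Lipschitz \emph{weak} solutions of the two equations, so that the distributional divergence equation, the existence of the conjugate potential $v$, and the weak derivatives of $f$ and $u$ are matched up consistently. Once this bookkeeping is in place, the remaining content of the proof is the explicit calculation above together with the invocation of Theorem~\ref{t:gen}.
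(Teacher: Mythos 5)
Your approach is essentially the same as the paper's: set $p(a)=\bar a+H(a)$ (the paper's $2F$), define $G$ by $G(p(a))=\bar a-H(a)$, verify strict monotonicity from $|\Delta a|^2-|\Delta H|^2>0$, identify $\Gamma_\pm$ with $\mathcal{D}(G),\mathcal{S}(G)$ via the same two algebraic identities, and reduce to Theorem~\ref{t:gen}. Your shortcut of invoking Theorem~\ref{t:gen} once for $u$ and then reading off $C^1$-regularity of $v$ from $\nabla v=iG(\nabla u)$ is a slight economy over the paper, which applies Theorem~\ref{t:gen} separately to $u$ (with $G$) and $v$ (with $G^*$); both are fine.

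There is one genuine gap. You assert that $p$ is a homeomorphism "with surjectivity following from Minty's theorem applied to the monotone operator $a\mapsto a+\overline{H(a)}$," but Minty--Browder surjectivity requires \emph{coercivity}, i.e.\ $\langle a+\overline{H(a)},a\rangle/|a|\to+\infty$. For a general strictly $1$-Lipschitz $H$ this need not hold: since $|H(a)|\lesssim|a|$, one only gets a lower bound by a constant, and $p$ can fail to be onto (so $G$ would be defined only on a proper subset of $\C$). The paper addresses this in Proposition~\ref{p:H}, item 1, by first replacing $H$ with $H\circ\Phi$ for a radial cut-off $\Phi$ that equals the identity on a large ball and tends to zero at infinity; this preserves strict $1$-Lipschitzness, forces the required coercivity, and does not change $H$ (hence $G$ and $\Gamma_\pm$) on the region $\overline B_{M_0}$ actually visited by $f_z$. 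Your proof needs this modification step; without it the passage to a globally defined strictly monotone $G$ on all of $\R^2$, which Theorem~\ref{t:gen} requires, is not justified.
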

A complete version, 
with the explicit role of the Lipschitz constant $M$ in Theorem~\ref{t:gen}, 
will be given in Theorem~\ref{t:beltramiM}. 
Many other types of regularizing effects of 
nonlinear Beltrami equations  have been studied, for instance in 
 \cite{ACFJS12,ACFJK17,ACFJ19,AstalaClop,GMSuper,ACFJKM}.

A few comments about the bad set $\Gamma_+\cap \Gamma_-$ are in order. 
As expected, it contains points 
$\xi\in\C$
at which the local Lipschitz constant
\begin{align*}
\mathrm{Lip}(H;\xi) = \limsup_{\zeta\to 0} |L_H(\xi,\zeta)|
\end{align*}
 is equal to  1.
However it takes into account more precise information, not only about the modulus of 
finite differences
 of $H$,
but also about their angle.
For instance, if $H$ is differentiable at a point $\xi_0$,
with local Lipschitz constant equal to 1,
but
 its differential is the 
conjugation
 operator
$\zeta\mapsto\bar \zeta$ (or its opposite), 
 then $\xi_0\notin\Gamma_-$ (or $\xi_0\notin\Gamma_+$).
This angular effect, 
and the particular role played by the conjugation operator, 
was already observed in \cite{GSVM} for linear Beltrami equations with varying coefficient,
and studied further in the context of quasilinear elliptic equations of the form $\dv( A(z,u)\nabla u )=0$, see the survey \cite{GRSS}.

\begin{remark}\label{r:conjug}
A basic interpretation of the distinguished role of the conjugation operator is provided by the following elementary fact about degenerate linear Beltrami equations
\begin{align*}
f_{\bar z}= L[ f_z],
\end{align*}
where $L\colon \mathbb C\to\C$ is a $\R$-linear operator of operator norm $\|L\|=1$.
If $\pm L$ is the conjugation operator,
 then $\re f$ or $\im f$ must be constant, and this is the only operator of norm 1 with that property (see Proposition~\ref{p:linbeltrami}).
In other words, at the linear level, the conjugation operator has a smoothing effect on one of the two components of $f$. 
The
 regularization of both components in Theorem~\ref{t:beltrami} is due to further nonlinear effects, 
making use of the fact that $H$ is strictly 1-Lipschitz.
\end{remark}

\subsection{Ideas of the proof of Theorem~\ref{t:gen}}

The strategy combines the ideas of \cite{DSS} 
with a duality argument 
 which allows to symmetrize 
the roles of $\mathcal D$ and $\mathcal S$.

The main tool in the proof of \cite[Theorem~1.2]{DSS} is a localization lemma 
\cite[Lemma~3.1]{DSS} stating the following, for $\xi_0\notin\mathcal D$ and $\rho>0$ such that $B_{4\rho}(\xi_0)\cap\mathcal D=\emptyset$.
If the image of $\nabla u$ stays outside $B_\rho(\xi_0)$, 
then, at a smaller scale,
 it must localize either completely inside $B_{4\rho}(\xi_0)$ or completely outside $B_{3\rho}(\xi_0)$. 
The first case already provides control on the oscillations of $\nabla u$, 
and in the second case one can iterate this lemma: 
if the second case keeps occuring, one eventually concludes that $\nabla u$ localizes inside a small neighborhood of $\mathcal D$.  When $\mathcal D$ is finite, this forces small oscillations.

The core idea in the proof of \cite[Lemma~3.1]{DSS} is Lebesgue's observation
that even though $H^1$ functions in the plane fail to be continuous,
continuity can be recovered if they satisfy a maximum and minimum principle 
(see e.g. \cite[\S~7.3-5]{IMGeom}).
This idea is applied to functions of $|\nabla u-\xi_0|$.
This requires an $H^1$ bound, and a maximum/minimum principle for such functions.
While the latter is somewhat general, the $H^1$ bound follows from a Cacciopoli-type inequality which heavily uses the fact that $\xi_0\notin\mathcal D$, and this is why \cite[Theorem~1.2]{DSS} is constrained to $\mathcal D$.

If, instead, we assume only that $\xi_0\notin \mathcal S$, there is no obvious reason why the $H^1$ bound should be valid.
A simple, but key, observation is that
there is a 
dual vector field $G^*$ such that 
$\eta_0=iG(\xi_0)\notin \mathcal D(G^*)$, and with the property that
 $iG(\nabla u)=\nabla v$
for a solution $v$ of
 $\dv G^*(\nabla v)=0$.
 This duality is already presented and exploited in \cite[\S~16.4]{AIM}.
Since it exchanges the roles of $\mathcal D$ and $\mathcal S$, one can apply the argument of
 \cite[Lemma~1.3]{DSS} to localize the image of $\nabla v$, and
come back to $\nabla u$ thanks to the strict monotony of $G$.
With this new localization lemma for $\xi_0\notin\mathcal S$, 
one can then follow the strategy of \cite[Theorem~1.2]{DSS}, 
as described above, with $\mathcal D$ replaced by $\mathcal D\cap\mathcal S$.

All
 these arguments are performed at the level of \emph{a priori} estimates, that is, assuming that $u$ is smooth, and we combine them with an approximation argument in order to conclude.

\subsection{Plan of the paper}

In Section~\ref{s:apriori}, we prove the a priori estimates described above. 
In Section~\ref{s:proof}, we perform the approximation argument to prove Theorem \ref{t:gen}. In Section~\ref{s:beltrami}, we transpose it to Beltrami equations, proving Theorem \ref{t:beltrami}. 
In Section~\ref{s:tildeS}, we prove Proposition~\ref{p:StildeS}.
In Section~\ref{s:ex} we construct the example of Theorem~\ref{t:ex}.
Several technical results are gathered in the Appendices.

\subsubsection*{Acknowledgements}

TL is supported by a EUR MINT PhD fellowship.
XL is supported by the ANR project ANR-22-CE40-0006.
This work was initiated during a stay
 at the Hausdorff Institute for Mathematics (HIM) in Bonn, funded by the Deutsche Forschungsgemeinschaft (DFG, German Research Foundation) under Germany’s Excellence Strategy –
EXC-2047/1 – 390685813, during the Trimester Program “Mathematics for Complex
Materials”.

\subsection{Notations}


\begin{align*}
\langle\cdot,\cdot\rangle \, \& \, \vert \cdot \vert
\quad & 
\text{ the scalar product \& associated euclidean norm on } \R^2\\
 B_r(x) \quad & \text{ the ball centered at $x$ with radius } r \\
 B_r \quad & \text{ the ball centered at 0 with radius }r\\ 
 i \quad  & \text{ the counterclockwise rotation of angle } \pi/2 \\
\interior A \quad & \text{ the interior of } A\subset\R^2 \\
\cl{A} \quad & \text{ the closure of } A\subset\R^2 \\
\partial A \quad & \text{ the boundary of } A\subset\R^2 \\
\diam A \quad & \text{ the diameter of } A \subset\R^2\\
\mathbb{S}^1 \quad & \text{ the unit circle } \lbrace x\in \R^2\colon \vert x \vert = 1 \rbrace \\
a \otimes b \quad & \text{ the  matrix with entries } (a\otimes b)_{ij}=a_ib_j\\
 D^{\zeta} \quad  & \text{ the finite difference operator } D^{\zeta} G(\xi) = G(\xi +\zeta)-G(\xi) \\
\nabla^s G \quad & \text{ the symmetric part }(\nabla G + \nabla G^T)/2  \text{ of }\nabla G
\end{align*}

%
%
%

\section{A priori estimate}\label{s:apriori}

In this section we prove an explicit a priori estimate on the oscillation of $\nabla u$, in terms of:
\begin{itemize}
\item
the \emph{modulus of monotony} $\omega_G\colon (0,\infty)\to (0,\infty)$, given by
\begin{align*}
\omega_G(t)=\inf_{|\xi-\zeta| > t} \langle G(\xi)-G(\zeta),\xi-\zeta\rangle,
\end{align*}
\item and the open sets $O_\lambda(G)$, $V_\Lambda(G)$ given, as in \cite{DSS},
 by
\begin{align}
O_\lambda(G)
&
=
\interior
\left\lbrace
\xi\in\R^2\colon
\liminf_{\vert \zeta \vert \to 0} \frac{\langle G(\xi +\zeta)-G(\xi),\zeta \rangle}{\vert \zeta \vert^2} \geq \lambda
\right\rbrace
\, ,
\label{eq:Olambda}
\\
V_\Lambda(G)
&
=
\interior
\left\lbrace
\xi\in\R^2\colon
\liminf_{\vert \zeta \vert \to 0} \frac{\langle G(\xi+\zeta)-G(\xi),\zeta \rangle}{\vert G(\xi+\zeta)-G(\xi) \vert^2} \geq \frac{1}{\Lambda}
\right\rbrace
\, ,
\label{eq:VLambda}
\end{align}
for $\lambda,\Lambda>0$, where $\interior A$ denotes the topological interior of $A\subset\R^2$.
\end{itemize} 
The relevance of these sets with respect to Theorem~\ref{t:gen} is that we have
\begin{align*}
\mathcal D(G) =\R^2\setminus \bigcup_{\lambda >0} O_\lambda(G),
\qquad
\mathcal S(G)=\R^2\setminus \bigcup_{\Lambda>0} V_\Lambda(G).
\end{align*}
If
 $G$ is $C^1$, they coincide with
\begin{align*}
& O_{\lambda}(G) =  \interior 
\left \lbrace \xi \in \R^2
\colon  \langle \nabla^s G(\xi) \zeta,\zeta \rangle \geq \lambda \vert \zeta \vert^2,\;\forall \zeta\in\R^2 \right \rbrace\, ,  \\
& V_{\Lambda}(G) =  \interior 
\left \lbrace \xi \in \R^2
\colon \Lambda \langle \nabla ^s G(\xi) \zeta,\zeta\rangle \geq \left \vert \nabla G(\xi) \zeta \right \vert^2, \;\forall\zeta\in\R^2 \right \rbrace 
\, .
\end{align*}
The main result of this section is the following quantitative version of Theorem~\ref{t:gen}
for a priori smooth solutions, 
and a \emph{strongly monotone} vector field $G$, that is, there exists $C>0$ such that
\begin{align}\label{eq:strongmonot}
C \langle G(\xi)-G(\zeta),\xi-\zeta\rangle \geq |\xi-\zeta|^2 + |G(\xi)-G(\zeta)|^2,
\end{align}
for all $\xi,\zeta\in\R^2$. 
(This is equivalent to $G$ being both uniformly elliptic and globally Lipschitz.)

\begin{proposition}\label{p:apriori}
Let $G \colon \mathbb{R}^2 \to \mathbb{R}^2 $ smooth and strongly monotone.
Let $r>0$ and assume that there exist $\lambda,\Lambda,M>0$ and $\xi_1,\ldots,\xi_N\in\R^2$ with $|\xi_i-\xi_j|\geq 4r$ for $i\neq j$, and such that
\begin{align*}
\overline B_{2M} \subset  V_{\Lambda}(G) \cup O_{\lambda}(G) \cup \bigcup_{j=1}^N B_{r}(\xi_j)\, .
\end{align*} 
Then
any smooth solution $u$ of $\dv(G(\nabla u))=0$ in $B_1$
with $|\nabla u|\leq M$ satisfies
\begin{align*}
\diam(\nabla u(B_{\delta}))\leq r,
\end{align*}
 where
$\delta>0$ depends on
\begin{itemize}
\item a Lebesgue number $\eta\in (0,r)$ of the above open covering: 
any ball $B_\eta(\xi)$ with $|\xi|\leq 2M$ must be contained in $V_{\Lambda}(G)$, $O_{\lambda}(G)$ or $B_{r}(\xi_j)$ for some $j\in\lbrace 1,\ldots N\rbrace$;
\item  the gradient bound $M$ and 
the ellipticity constants $\lambda,\Lambda$;
\item the integrals $\int_{B_1}|\nabla u|^2\, dx $ and $\int_{B_1} |G(\nabla u)|^2\, dx$;
\item the modulus of monotony $\omega_G$ via any $c>0$ such that
$\omega_G(t)/t\geq c$ for all
 $t \in [\eta/4,M+\eta]$.
\end{itemize}
\end{proposition}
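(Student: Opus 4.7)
The plan combines the iteration of \cite[Theorem~1.2]{DSS} with a conjugate-equation duality that symmetrizes the roles of $O_\lambda(G)$ and $V_\Lambda(G)$, as announced in the introduction.

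The core ingredient is a unified \emph{localization dichotomy}: there is a ratio $\kappa\in(0,1)$, depending on the stated data, such that whenever $\xi_0$ belongs to $O_\lambda(G)$ or $V_\Lambda(G)$ and $B_{4\rho}(\xi_0)$ is contained in that set, and $\nabla u(B_R)\subset \overline B_{2M}\setminus B_\rho(\xi_0)$, then either $\nabla u(B_{\kappa R})\subset B_{4\rho}(\xi_0)$ or $\nabla u(B_{\kappa R})\cap B_{3\rho}(\xi_0)=\emptyset$. For $\xi_0\in O_\lambda(G)$ this is exactly \cite[Lemma~3.1]{DSS}, whose proof rests on a Caccioppoli bound for a truncation $w$ of $|\nabla u-\xi_0|$ (using $\nabla^s G(\nabla u)\geq\lambda I$ wherever $|\nabla u-\xi_0|<4\rho$) combined with Lebesgue's observation that a nonnegative $H^1$ function in two dimensions which is both a super- and a sub-solution of a divergence-form equation is continuous.

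For $\xi_0\in V_\Lambda(G)$ the new idea is to pass to the \emph{conjugate equation}. Since $B_1$ is simply connected and $\dv G(\nabla u)=0$, there exists $v\in C^\infty(B_1)$ with $\nabla v=i\,G(\nabla u)$; inverting gives $\nabla u=G^{-1}(-i\nabla v)$, whence $v$ solves $\dv G^\ast(\nabla v)=0$ with $G^\ast(\eta):=i\,G^{-1}(-i\eta)$. One checks that $G^\ast$ is smooth and strongly monotone, with ellipticity data quantitatively inherited from those of $G$, and that the intrinsic definitions \eqref{eq:Olambda}--\eqref{eq:VLambda} yield the key correspondence
\[
\xi\in V_\Lambda(G)\quad\Longleftrightarrow\quad iG(\xi)\in O_{1/\Lambda}(G^\ast).
\]
Hence the already-proved case, applied to $v$ at $\eta_0:=iG(\xi_0)$, localizes $\nabla v$, and strict monotony of $G$ (quantified via $\omega_G$ on $[\eta/4,M+\eta]$) transfers the localization back to $\nabla u$ around $\xi_0$, at the cost of adjusting the various constants but preserving the qualitative form of the dichotomy.

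Equipped with this unified dichotomy I would run the iteration of \cite[Theorem~1.2]{DSS}. The Lebesgue-number hypothesis says every point of $\overline B_{2M}$ is either $\eta$-deep in $O_\lambda\cup V_\Lambda$ or within $\eta$ of some bad centre $\xi_j$. Starting from $R=1$ and $\rho$ comparable to $\min(\eta,r)$, I either pick $\xi_0$ in the image $\nabla u(B_R)$ lying $\eta$-deep in the good set and apply the dichotomy, or observe that the image is already confined near the finite family $\{\xi_j\}$; the ``inside'' alternative immediately produces a small diameter, while the ``outside'' alternative strictly shrinks the image by removing a ball of definite size. Because only $N$ bad balls are present and $\overline B_{2M}$ admits only a bounded number of disjoint $\rho$-balls, the procedure terminates after a bounded number of steps, yielding $\diam\nabla u(B_\delta)\leq r$ with $\delta$ equal to the product of the finitely many $\kappa$-factors accumulated. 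The main obstacle is the duality step: verifying the correspondence between $V_\Lambda(G)$ and $O_{1/\Lambda}(G^\ast)$ intrinsically (without relying on the pointwise description that fails when $G^{-1}$ is not differentiable), and tracking how the modulus $\omega_G$, the strong-monotony constant of \eqref{eq:strongmonot}, and the $L^2$ bounds transfer from $u$ to $v$ and back.
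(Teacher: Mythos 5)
Your proposal correctly identifies the two main ingredients that the paper uses: the conjugate equation $\dv G^*(\nabla v)=0$ with $G^*(\eta)=iG^{-1}(-i\eta)$ and $\nabla v=iG(\nabla u)$, together with the correspondence $\xi\in V_\Lambda(G)\Leftrightarrow iG(\xi)\in O_{1/\Lambda}(G^*)$, and the iteration scheme from \cite[Theorem~1.2]{DSS} driven by a localization dichotomy and a finite covering by $\rho$-balls. So the overall architecture matches the paper.

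There is, however, a genuine difference in how the $V_\Lambda$ dichotomy is obtained, and the paper's route is cleaner. You propose to apply the $O_\lambda$ dichotomy to $v$ at $\eta_0=iG(\xi_0)$ and then transfer the resulting ball-localization of $\nabla v$ back to $\nabla u$ via strict monotony. This can be made to work, but it is awkward: $iG$ does not map balls to balls, so one has to choose a smaller $\rho'$ with $B_{4\rho'}(\eta_0)\subset iG(B_{4\rho}(\xi_0))$, push the resulting localization back through $G^{-1}$, and in doing so the $3\rho$/$4\rho$ radii of the dichotomy are distorted by the bi-Lipschitz constants of $G$; extra work is then needed to restore the precise form required by the iteration. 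The paper avoids this entirely: it transfers only the $H^1$ estimate, namely from
$\int_{B_{1/2}\cap(\nabla v)^{-1}(O_{1/\Lambda}(G^*))}|\nabla^2 v|^2\leq C$
one reads off directly
$\int_{B_{1/2}\cap(\nabla u)^{-1}(V_\Lambda(G))}|\nabla(G(\nabla u))|^2\leq C$
because $\nabla v=iG(\nabla u)$ and $(\nabla v)^{-1}(O_{1/\Lambda}(G^*))=(\nabla u)^{-1}(V_\Lambda(G))$, with no distortion of sets. The localization lemma for $V_\Lambda$ (Lemma~\ref{l:locVLambda}) is then proved directly at the level of $u$, using this $H^1$ bound together with the maximum/minimum principle $\partial(\nabla u(B_r))\subset\nabla u(\partial B_r)$ applied to a cutoff $\mathcal R(G(\nabla u))$ that separates $G(B_{3\rho}(\xi_0))$ from $G(\overline B_M\setminus B_{4\rho}(\xi_0))$; this is where $\omega_G$ enters (to bound the distance between those two image sets from below), rather than through a round trip $u\to v\to u$.

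A smaller but real slip: in the iteration you say you ``pick $\xi_0$ in the image $\nabla u(B_R)$''. The localization lemma requires the opposite, $\nabla u(B_R)\cap B_\rho(\xi_0)=\emptyset$, so $\xi_0$ must sit at distance at least $\rho$ from the image. Since $\nabla u(B_1)\subset\overline B_M$, the paper starts the iteration from a ball $B^k_{4\rho}$ contained in the annulus $B_{2M}\setminus\overline B_M$ (automatically disjoint from the image) and propagates through neighboring balls of the finite $\rho$-covering of $\overline B_{2M}\setminus\bigcup_j B_r(\xi_j)$, using connectedness of that set. Getting this direction right matters because it is what makes the iteration start unconditionally; otherwise you have nothing to anchor it on.
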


As explained in the introduction, the proof of this a priori estimate follows the strategy in \cite[Theorem~1.2]{DSS} and relies on two localization lemmas: 
\begin{itemize}
\item
one dealing with values of $\nabla u$ away from $\mathcal D$, that is, in $O_\lambda$, already proved in \cite[Lemma~3.1]{DSS};
\item 
a counterpart dealing with values of $\nabla u$ away from $\mathcal S$, that is, in $V_\Lambda$, which is our main new contribution.
\end{itemize}
We start by proving this new localization lemma 
before proceeding to the proof of Proposition~\ref{p:apriori}.

\subsection{The localization lemma in $V_\Lambda$}

This subsection is devoted to the proof of the following, 
where $G\colon\R^2\to\R^2$ 
is still assumed smooth and strongly monotone.

\begin{lemma}\label{l:locVLambda}
Let $u$ a solution of  $\dv(G(\nabla u))=0 $ in $B_1$ and assume that  
\[ \nabla u (B_1) \cap B_{\rho}(\xi_0) = \emptyset \text{ } \text{ and } \text{ } B_{4 \rho}(\xi_0)  \subset V_{\Lambda}(G)\, , \]
for some $\Lambda,\rho > 0$ and $\xi_0\in\R^2$.
Then we have
\begin{align*}
\text{either }
&
\nabla u(B_{\delta}) \subset B_{4 \rho}(\xi_0), 
\quad
\text{or }
\nabla u (B_{\delta}) \cap B_{3 \rho}(\xi_0) = \emptyset \, ,
\end{align*}
for some $\delta>0$ depending on $\Lambda$, $\rho$, $\| \nabla u \|_{L^2(B_1)}$, 
and any $c>0$ such that $\omega_G(t)/t\geq c$ for $t\in [\rho, \|\nabla u\|_\infty+3\rho ] $.
\end{lemma}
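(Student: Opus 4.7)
The plan is to apply the de Silva--Savin localization lemma \cite[Lemma~3.1]{DSS} in a dual setting, obtained by passing to a conjugate equation, and then to transfer the resulting localization back to $\nabla u$ using the strict monotonicity of $G$.

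\emph{Conjugate equation and duality.} Since $\dv G(\nabla u)=0$ on the simply connected domain $B_1$, the rotated field $iG(\nabla u)$ is curl-free and hence admits a scalar potential $v\colon B_1\to\R$ with $\nabla v=iG(\nabla u)$. I set $G^*(\eta):=iG^{-1}(-i\eta)$: a direct calculation gives $G^*(\nabla v)=i\nabla u$, so $\dv G^*(\nabla v)=-\curl\nabla u=0$, and $G^*$ inherits from $G$ smoothness and strong monotonicity. Using the substitution $\eta=G(\xi+\zeta)-G(\xi)$ in the quotient defining $V_\Lambda(G)$ (see Remark~\ref{r:defSD}) together with the rotational invariance of Euclidean scalar products, one checks the duality $iG(V_\Lambda(G))=O_{1/\Lambda}(G^*)$. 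With $\eta_0:=iG(\xi_0)$, the hypothesis $B_{4\rho}(\xi_0)\subset V_\Lambda(G)$ then becomes $iG(B_{4\rho}(\xi_0))\subset O_{1/\Lambda}(G^*)$.

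\emph{Transfer and application of DSS.} The $V_\Lambda$-hypothesis on $B_{4\rho}(\xi_0)$ yields the Lipschitz bound $|G(\xi+\zeta)-G(\xi)|\leq\Lambda|\zeta|$ on this ball, while the modulus-of-monotony hypothesis gives $|G(\xi+\zeta)-G(\xi)|\geq c|\zeta|$ on the relevant range. Thus $iG$ is bilipschitz on $B_{4\rho}(\xi_0)$, and with the choice $\rho':=c\rho$ one gets both $B_{4\rho'}(\eta_0)\subset iG(B_{4\rho}(\xi_0))\subset O_{1/\Lambda}(G^*)$ (ellipticity for $G^*$ at $\eta_0$) and $B_{\rho'}(\eta_0)\subset iG(B_\rho(\xi_0))$, the latter turning the disjointness hypothesis into $\nabla v(B_1)\cap B_{\rho'}(\eta_0)=\emptyset$. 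Since $\|\nabla v\|_{L^2(B_1)}=\|G(\nabla u)\|_{L^2(B_1)}$ is controlled by $\Lambda\|\nabla u\|_{L^2(B_1)}$ (up to a constant depending on $G(0)$), all hypotheses of \cite[Lemma~3.1]{DSS} are in place. That lemma, applied to $v$ with parameters $(G^*,\eta_0,\rho',1/\Lambda)$, then produces $\delta>0$ such that either $\nabla v(B_\delta)\subset B_{4\rho'}(\eta_0)$ or $\nabla v(B_\delta)\cap B_{3\rho'}(\eta_0)=\emptyset$.

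\emph{Translation and main obstacle.} The first (containment) case translates at once via the lower bilipschitz bound $|\xi-\xi_0|\leq|G(\xi)-G(\xi_0)|/c$: from $|G(\nabla u)-G(\xi_0)|<4\rho'$ we deduce $|\nabla u-\xi_0|<4\rho$, as required. The main obstacle lies in the second case: from $|G(\nabla u)-G(\xi_0)|\geq 3\rho'=3c\rho$ the upper bilipschitz bound only yields $|\nabla u-\xi_0|\geq 3c\rho/\Lambda$, strictly smaller than $3\rho$ whenever $c<\Lambda$. Bridging this gap is the technical heart of the argument and cannot be handled by a black-box use of the DSS lemma. The most natural remedy is to adapt its proof to the dual picture, running its Caccioppoli estimate and Lebesgue-type continuity argument with the pulled-back sublevel sets $iG(B_r(\xi_0))$ -- which are topological disks around $\eta_0$ but not Euclidean balls -- in place of Euclidean balls $B_r(\eta_0)$. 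The Caccioppoli estimate for $|\nabla v-\eta_0|$ remains available because $\eta_0\in O_{1/\Lambda}(G^*)$, and strict monotonicity of $G$ ensures these disks are comparable to Euclidean balls at the topological level, so that the geometric argument of \cite[Lemma~3.1]{DSS} adapts cleanly and yields the desired dichotomy directly in the $\xi$-coordinates.
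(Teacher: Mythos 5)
Your approach is essentially the paper's: you correctly locate the duality $G^*(\eta)=iG^{-1}(-i\eta)$, correctly observe that a black-box application of the de Silva--Savin localization lemma to $(\nabla v,G^*)$ fails because the bilipschitz constants of $G$ on the annulus are mismatched (lower bound $c$ from $\omega_G$, upper bound $\Lambda$ from $V_\Lambda$), and correctly conclude that the remedy is to run the Lebesgue-type continuity argument with the image sets $G(B_r(\xi_0))$ rather than Euclidean balls in the dual variable. This is precisely what the paper does: it first establishes, via the duality, the $H^1$ bound on $\nabla(G(\nabla u))$ over $(\nabla u)^{-1}(V_\Lambda(G))$ (Lemma~\ref{l:H1bdVLambda}), and then proves Lemma~\ref{l:locVLambda} directly by constructing a cutoff $\mathcal R$ on the $\eta$-plane separating $\underline\Sigma=G(B_{3\rho}(\xi_0))$ from $\overline\Sigma=G(\overline B_M\setminus B_{4\rho}(\xi_0))$, and applying the mean value theorem to $\mathcal R(G(\nabla u))$ on circles $\partial B_r$.

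The one step you leave hand-wavy is also the one the reader most needs spelled out. You write that ``strict monotonicity ensures these disks are comparable to Euclidean balls at the topological level,'' but topological comparability is content-free (every embedded disk is homeomorphic to a ball); what the argument actually uses is a \emph{metric} separation
$\dist(\underline\Sigma,\overline\Sigma)\geq \inf_{\rho\leq t\leq M+3\rho}\omega_G(t)/t\geq c$,
which is exactly why the assumption on $\omega_G$ appears in the statement of the lemma, and which controls the Lipschitz constant of the cutoff $\mathcal R$ so that $\int_{\partial B_r}|\nabla(\mathcal R(G(\nabla u)))|\geq 1$ can be integrated in $r$ against the $H^1$ bound. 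Similarly, the Caccioppoli estimate is not ``for $|\nabla v-\eta_0|$'': the $H^1$ bound is for $\nabla^2 v=\nabla(G(\nabla u))$ on $(\nabla v)^{-1}(O_{1/\Lambda}(G^*))$, and it is the chain rule applied to $\mathcal R\circ G\circ\nabla u$, together with the support property $|\nabla\mathcal R|\leq (4/\eta)\mathbf 1_{\R^2\setminus(\overline\Sigma\cup\underline\Sigma)}$, that localizes the integral to the good set. Making these two points explicit turns your sketch into the paper's proof; without them, the final paragraph asserts rather than proves that the adaptation ``yields the desired dichotomy.''
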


As described in the introduction, the proof of Lemma~\ref{l:locVLambda} relies on 
a well-known duality \cite[\S~16.4]{AIM}. 
We recall here the basic properties that we will use.
Note that $G$ is invertible, as follows e.g. from the Minty-Browder theorem \cite[Theorem~9.14-1]{Ciarlet},
its inverse $G^{-1}$ is smooth thanks to the inverse function theorem,
and  the strong monotony \eqref{eq:strongmonot} of $G$ 
implies 
  that $G^{-1}$
 is strongly monotone.
At the core of the proof of Lemma~\ref{l:locVLambda} are the two following elementary but crucial properties.

\begin{lemma}\label{l:G*}
The dual vector field  $G^*(\xi)= i G^{-1}(-i\xi)$ is strongly monotone and
satisfies:
\begin{enumerate}
\item For any smooth solution $u$ of $\dv G(\nabla u)=0$ in $B_1$, there exists a smooth function $v$ such that $G(\nabla u)=-i\nabla v$ and
\begin{align*}
\dv G^*(\nabla v)=0\quad\text{in }B_1\, .
\end{align*}
\item For any $\Lambda>0$ and $\xi\in\R^2$ we have
\begin{align*}
\xi\in V_{\Lambda}(G)\quad\Longleftrightarrow
\quad
iG(\xi)\in O_{1/\Lambda}(G^*).
\end{align*}
\end{enumerate}
\end{lemma}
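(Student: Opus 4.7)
The plan is to verify the three assertions of Lemma~\ref{l:G*} by direct computation, exploiting the definition $G^*(\xi)=iG^{-1}(-i\xi)$ together with the fact that the rotation $i$ is a Euclidean isometry. The key observation is that the change of variable $\xi=iG(\xi')$ turns $G^*$ into a conjugate of $G$ up to the isometry $i$, which swaps the roles of $|\cdot-\cdot|$ and $|G(\cdot)-G(\cdot)|$ in the various ellipticity quotients.

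First, I would establish strong monotonicity of $G^*$. Given $\xi,\zeta\in\R^2$, I parametrize them as $\xi=iG(\xi')$, $\zeta=iG(\zeta')$ with $\xi',\zeta'\in\R^2$, so that $G^*(\xi)=i\xi'$ and $G^*(\zeta)=i\zeta'$. Since $i$ preserves the scalar product, I get $\langle G^*(\xi)-G^*(\zeta),\xi-\zeta\rangle=\langle \xi'-\zeta',G(\xi')-G(\zeta')\rangle$, together with $|\xi-\zeta|^2=|G(\xi')-G(\zeta')|^2$ and $|G^*(\xi)-G^*(\zeta)|^2=|\xi'-\zeta'|^2$, so \eqref{eq:strongmonot} for $G^*$ is just a rewriting of \eqref{eq:strongmonot} for $G$.

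For assertion (1), since $B_1$ is simply connected, $\dv G(\nabla u)=0$ is exactly the integrability condition for the existence of a smooth $v$ with $\nabla v=iG(\nabla u)$, equivalently $G(\nabla u)=-i\nabla v$ (using $(-i)i=\mathrm{Id}$). Then
\begin{align*}
G^*(\nabla v)=iG^{-1}(-i\nabla v)=iG^{-1}\bigl(G(\nabla u)\bigr)=i\nabla u,
\end{align*}
which is a rotated gradient, hence divergence-free, and $v$ is smooth by the inverse function theorem applied to $G$.

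For assertion (2), I note that the map $\Phi:=iG\colon\R^2\to\R^2$ is a homeomorphism, continuous and injective by strict monotony, with continuous inverse $\Phi^{-1}=G^{-1}\circ(-i)$. Fixing $\xi_0\in\R^2$, setting $\eta_0=\Phi(\xi_0)$, and writing any nearby $\eta_0+\tau=\Phi(\xi_0+\zeta)$ for a unique small $\zeta\to 0$, I compute $\tau=i\bigl(G(\xi_0+\zeta)-G(\xi_0)\bigr)$ and $G^*(\eta_0+\tau)-G^*(\eta_0)=i\zeta$. Using again that $i$ is an isometry,
\begin{align*}
\frac{\langle G^*(\eta_0+\tau)-G^*(\eta_0),\tau\rangle}{|\tau|^2}
=\frac{\langle G(\xi_0+\zeta)-G(\xi_0),\zeta\rangle}{|G(\xi_0+\zeta)-G(\xi_0)|^2}.
\end{align*}
Hence the defining sets
$A_\Lambda=\{\xi:\liminf\cdots\geq 1/\Lambda\}$ of \eqref{eq:VLambda}
and
$B_{1/\Lambda}=\{\eta:\liminf\cdots\geq 1/\Lambda\}$ of \eqref{eq:Olambda}
satisfy $A_\Lambda=\Phi^{-1}(B_{1/\Lambda})$, and since $\Phi$ is a homeomorphism, taking interiors gives $V_\Lambda(G)=\Phi^{-1}(O_{1/\Lambda}(G^*))$, which is the claimed equivalence.

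The main care required is simply the bookkeeping of the rotation $i$, its inverse $-i$, and the fact that one must match the pointwise conditions \emph{before} taking interiors in \eqref{eq:VLambda} and \eqref{eq:Olambda}, which is legitimate because $\Phi$ is a homeomorphism and hence sends interiors to interiors.
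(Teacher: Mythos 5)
Your proposal is correct and follows essentially the same route as the paper: strong monotony via the isometry $i$, Poincar\'e's lemma for part (1), and the finite-difference identity combined with the homeomorphism property of $iG$ for part (2). The only slip is in part (1), where you attribute the smoothness of $v$ to ``the inverse function theorem applied to $G$''; in fact $v$ is smooth simply because $\nabla v=iG(\nabla u)$ is a smooth vector field (both $G$ and $u$ being smooth), so a smooth local primitive exists directly. The IFT plays no role there. On the other hand, your treatment of part (2) is a touch more careful than the paper's, since you explicitly observe that the homeomorphism $\Phi=iG$ carries interiors to interiors, which is what legitimizes deducing $V_\Lambda(G)=\Phi^{-1}\bigl(O_{1/\Lambda}(G^*)\bigr)$ from the pointwise identity before interiors are taken; the paper leaves this passage implicit.
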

\begin{proof}
The strong monotony of $G^{-1}$ implies that of $G^*$. 
For the first property, the existence of $v$ such that $G(\nabla u)=-i\nabla v$ follows from Poincaré's lemma and the fact that  $iG(\nabla u)$ is curl-free. Then we see that
\begin{align*}
G^*(\nabla v)=G^*(iG(\nabla u))=iG^{-1}(G(\nabla u))=i\nabla u,
\end{align*}
is divergence-free.
The second property follows from the identity
\begin{align*}
\frac{\langle G(\xi +\zeta)-G(\xi),\zeta\rangle}{|G(\xi+\zeta)-G(\xi)|^2}
=
\frac{\langle G^*(\eta+\sigma)-G^*(\eta),\sigma\rangle}{|\sigma|^2}
\, ,
\end{align*}
 for any $\zeta\in\R^2\setminus\lbrace 0\rbrace$ and $\eta=iG(\xi)$, $\sigma=iG(\xi+\zeta)-iG(\xi)$, and the fact that $\zeta =G^{-1}(G(\xi)-i\sigma)-\xi \to 0$  if and only if $\sigma\to 0$.
\end{proof}

The duality provided by Lemma~\ref{l:G*} enables us to prove an $H^1$ estimate for $G(\nabla u)$ in the preimage  $(\nabla u)^{-1}(V_\Lambda)$, 
by reducing it to the following estimate in the preimage of $O_\lambda$.

\begin{lemma}\label{l:H1bdOlambda}
Let $u$ a smooth solution of $\dv(G(\nabla u))=0$ in $B_1$. Then we have
\begin{equation}
\label{Cacciopoli2bis}
\int_{B_{1/2} \cap (\nabla u)^{-1}(O_{\lambda})} \left \vert \nabla^2 u \right  \vert ^2 dx \leq C
\, , 
\end{equation}
where  $C = \frac{c_0}{\lambda^2} \| G( \nabla u) \|_{L^2(B_{1})}^2 $ for a universal constant $c_0>0$.
\end{lemma}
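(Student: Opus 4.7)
The plan is to run a Caccioppoli-type computation at the level of $H := G(\nabla u)$, using the $2$-dimensional structure and the fact that $\dv H = 0$. Set $W = \nabla^2 u$ and $B = \nabla G(\nabla u)$, so that the chain rule gives the matrix identity $\nabla_x H = BW$. Differentiating the equation in $x_k$ yields $\dv(B\nabla \partial_k u)=0$, and testing this against $\phi^2 H_k$, where $\phi\in C^\infty_c(B_1)$ is a fixed cutoff with $\phi\equiv 1$ on $B_{1/2}$, one integration by parts together with the pointwise identities $(B\nabla\partial_k u)_i = (\nabla H)_{ik}$ and $\partial_i H_k=(\nabla H)_{ki}$ gives, after summing over $k$,
\begin{equation*}
\int \phi^2 \tr\bigl((\nabla H)^2\bigr)\,dx = -2\int \phi \bigl\langle (\nabla H)H, \nabla \phi \bigr\rangle\,dx.
\end{equation*}

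For the right-hand side I would integrate by parts a second time, transferring the remaining spatial derivative off $H$; the only bulk term produced involves $\dv H$ and therefore vanishes. What remains is $2\int (\nabla \phi \cdot H)^2\,dx + 2\int \phi \langle \nabla^2\phi\,H, H\rangle\,dx$, bounded by $c_0 \|H\|_{L^2(B_1)}^2$ with $c_0$ a universal constant depending only on the chosen cutoff.

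The main obstacle is the pointwise lower bound on $\tr((\nabla H)^2)$ on $(\nabla u)^{-1}(O_\lambda)$. In the variational case $B$ is symmetric and this is the classical inequality; in general $B$ has a nonzero antisymmetric part, which requires a new idea. The plan is to exploit the $2$D structure: since $\tr(\nabla H)=\dv H=0$, Cayley--Hamilton gives $\tr((\nabla H)^2)=-2\det(\nabla H) = -2 \det B\cdot \det W$. Writing $B=B^s+\alpha J$ with $J$ the $\pi/2$-rotation, a short $2\times 2$ calculation yields $\det B = \det B^s + \alpha^2 \geq \det B^s$. Moreover the PDE itself reads $\tr(B^s W)=0$, which combined with $B^s \geq \lambda I$ on $O_\lambda$ forces $\det W \leq 0$ (diagonalising $B^s$). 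Combining these,
\begin{equation*}
\tr\bigl((\nabla H)^2\bigr) \,\geq\, -2\det B^s \cdot \det W \,=\, \tr\bigl((B^s W)^2\bigr) \,=\, \bigl|(B^s)^{1/2} W (B^s)^{1/2}\bigr|^2 \,\geq\, \lambda^2 |W|^2
\end{equation*}
on $(\nabla u)^{-1}(O_\lambda)$, the last inequality being immediate in an eigenbasis of $B^s$. Integrating and using $\phi \equiv 1$ on $B_{1/2}$ gives the desired estimate. The key new observation making the non-variational case go through is the favourable identity $\det B = \det B^s + \alpha^2$, which absorbs the antisymmetric part of $\nabla G$ into a strictly larger determinant rather than letting it spoil the Caccioppoli computation.
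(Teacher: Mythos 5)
Your proof is correct and takes essentially the same route as the paper: the Caccioppoli identity obtained by testing the differentiated equation against $\phi^2 G(\nabla u)$ is the same computation (the paper carries out the same two integrations by parts, writing the result as $2\int G^i(\nabla u)G^k(\nabla u)\partial_k(\xi_i\xi)\,dx$), your identity $\det B = \det B^s + \alpha^2$ is a sharper form of the paper's inequality $\det(A^s)\leq\det A$, and the final pointwise lower bound via the symmetric trace-free matrix $(B^s)^{1/2}W(B^s)^{1/2}$ is exactly the paper's argument borrowed from De Silva--Savin. One small step to make explicit: to pass from $\int_{B_1}\phi^2\tr\bigl((\nabla H)^2\bigr)\,dx$ down to the integral over $B_{1/2}\cap(\nabla u)^{-1}(O_\lambda)$, you need $\tr\bigl((\nabla H)^2\bigr)\geq 0$ on \emph{all} of $B_1$, not only on the $O_\lambda$-preimage; in this lemma $G$ is smooth and strongly monotone, so $\nabla^s G(\nabla u)>0$ everywhere, hence $\det\nabla^2 u\leq 0$ and $\tr\bigl((\nabla H)^2\bigr)=-2\det\nabla G(\nabla u)\,\det\nabla^2 u\geq 0$ throughout $B_1$, which closes the argument.
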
 
\begin{proof}[Proof of Lemma~\ref{l:H1bdOlambda}]
This is proved in \cite[Proposition~3.3]{DSS} in the variational case $G=\nabla F$.
We provide here the minor changes needed to deal with a general strongly monotone $G$.

Multiplying the equation $\dv(\nabla G(\nabla u)\nabla u_k)=0$ (where the subscript $k$ denotes differentiation with respect to $x_k$) by $\xi^2 G^k(\nabla u)$ with $\xi$ a smooth cut-off function satisfying $\mathbf 1_{B_{1/2}}\leq\xi\leq\mathbf 1_{B_1}$,
and performing the same manipulations as in \cite[Proposition~3.3]{DSS}, 
using in particular that the matrix $\nabla G(\nabla u)\nabla^2u$ is trace-free, we obtain
\begin{align*}
2 \int_{B_1}\xi^2 \det(\nabla G(\nabla u)) \lvert \det(\nabla^2u) \rvert 
\,dx 
&
=  2\int_{B_1}G^i(\nabla u ) G^k(\nabla u) \partial_k(\xi_i \xi)
\, dx 
\\
&\leq c_0 \int_{B_1} |G(\nabla u)|^2\, dx.
\end{align*}
For any $A\in \R^{2\times 2}$ and $A^s$ its symmetric part, we have
\[  \det(A^{s}) = a_{11}a_{22}-\left(\frac{a_{12}+a_{21}}{2}\right)^2 \leq a_{11}a_{22}-a_{12}a_{21} = \det(A),\]
so we infer
\begin{align}\label{eq:H1bd1}
2 \int_{B_1}\xi^2 \det(\nabla^s G(\nabla u)) \lvert \det(\nabla^2u) \rvert 
\,dx 
&\leq c_0 \int_{B_1} |G(\nabla u)|^2\, dx.
\end{align}
Then, using that the matrix 
\begin{align*}
A= \left(\nabla^{s}G(\nabla u) \right)^{\frac{1}{2}} \nabla^2u \left(\nabla^{s}G(\nabla u) \right)^{\frac{1}{2}},
\end{align*}
is symmetric and trace-free we obtain, 
arguing as in \cite[Proposition~3.3]{DSS}, the inequality
\begin{align*}
\det (\nabla^{s}G(\nabla u) )\left \vert \det \nabla^2u \right \vert \geq \frac{\lambda^2}{2} \left \vert \nabla^2u \right \vert ^2 \mathbf 1_{ (\nabla u)^{-1}(O_{\lambda}(G))},
\end{align*} 
which, plugged back into \eqref{eq:H1bd1}, concludes the proof.
\end{proof}

We
 combine the estimate of Lemma~\ref{l:H1bdOlambda} and the duality of Lemma~\ref{l:G*} to obtain an estimate in the preimage of $(\nabla u)^{-1}(V_\Lambda)$.

\begin{lemma}\label{l:H1bdVLambda}
Let $u$ a smooth solution of $\dv(G(\nabla u))=0$ in $B_1$. Then we have 
\begin{equation*}
\int_{B_{1/2} \cap (\nabla u)^{-1}(V_{\Lambda}(G))} \left \vert \nabla \left (G( \nabla u) \right) \right  \vert ^2 dx \leq C 
\end{equation*}
Where $C = c_0 \Lambda^2 \| \nabla u \|_{L^2(B_{1})}^2$  for a universal constant $c_0>0$.
\end{lemma}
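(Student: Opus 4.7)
The strategy is to reduce the statement to Lemma~\ref{l:H1bdOlambda} via the duality of Lemma~\ref{l:G*}. Informally, the set $V_\Lambda(G)$ is the image under $\xi\mapsto -iG^*{}^{-1}(\xi)$ of $O_{1/\Lambda}(G^*)$, so integrating $|\nabla(G(\nabla u))|^2$ on $(\nabla u)^{-1}(V_\Lambda(G))$ should be the same as integrating $|\nabla^2 v|^2$ on $(\nabla v)^{-1}(O_{1/\Lambda}(G^*))$, where $v$ is the dual potential.

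First, I introduce the dual potential from Lemma~\ref{l:G*}: let $v$ be smooth with
\[
G(\nabla u) = -i\nabla v, \qquad \dv G^*(\nabla v) = 0 \text{ in } B_1,
\]
where $G^*(\xi) = iG^{-1}(-i\xi)$. Since $i$ is an isometry of $\R^2$, differentiating $G(\nabla u) = -i\nabla v$ gives $|\nabla(G(\nabla u))|^2 = |\nabla^2 v|^2$ pointwise.

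Next, I identify the integration domain. The second assertion of Lemma~\ref{l:G*} tells us that for every $\xi \in \R^2$,
\[
\xi \in V_\Lambda(G) \iff iG(\xi) \in O_{1/\Lambda}(G^*).
\]
Applied pointwise to $\xi = \nabla u(x)$, and using that $iG(\nabla u) = i(-i\nabla v) = \nabla v$, this yields the set equality
\[
(\nabla u)^{-1}(V_\Lambda(G)) = (\nabla v)^{-1}(O_{1/\Lambda}(G^*)).
\]

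Now I apply Lemma~\ref{l:H1bdOlambda} to the smooth solution $v$ of $\dv G^*(\nabla v) = 0$ with ellipticity parameter $\lambda = 1/\Lambda$. This gives
\[
\int_{B_{1/2} \cap (\nabla v)^{-1}(O_{1/\Lambda}(G^*))} |\nabla^2 v|^2 \, dx \leq c_0 \Lambda^2 \|G^*(\nabla v)\|_{L^2(B_1)}^2.
\]
Combining with the pointwise identity $|\nabla(G(\nabla u))|^2 = |\nabla^2 v|^2$ and the set identity above, and noting that $G^*(\nabla v) = G^*(iG(\nabla u)) = i\nabla u$ (as in the proof of Lemma~\ref{l:G*}), so that $|G^*(\nabla v)| = |\nabla u|$, we obtain exactly the desired bound $c_0 \Lambda^2 \|\nabla u\|_{L^2(B_1)}^2$.

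There is no real obstacle: the argument is a clean translation through the duality, and the only things to verify carefully are (i) that $v$ is smooth enough to apply Lemma~\ref{l:H1bdOlambda} (which follows from the smoothness of $u$ and $G$, since $v$ solves a smooth strongly monotone equation with smooth right-hand side $-iG(\nabla u)$), and (ii) that the set equivalence from Lemma~\ref{l:G*} passes correctly from pointwise values of $\xi$ to $\xi = \nabla u(x)$, which is immediate.
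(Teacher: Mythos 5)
Your proof is correct and follows exactly the paper's argument: introduce the dual potential $v$ via Lemma~\ref{l:G*}, use the set identity $(\nabla u)^{-1}(V_\Lambda(G)) = (\nabla v)^{-1}(O_{1/\Lambda}(G^*))$, and apply Lemma~\ref{l:H1bdOlambda} to $v$ and $G^*$. You merely spell out the intermediate verifications (the pointwise norm identity and $|G^*(\nabla v)|=|\nabla u|$) that the paper leaves implicit, which is fine.
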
 

\begin{proof}
By Lemma~\ref{l:G*} we have $iG(\nabla u)=\nabla v$,
where $v$ satisfies
 \begin{align*}
 \dv(G^*(\nabla v))=0\quad\text{ in }B_1,
 \quad
 (\nabla v)^{-1} \left (O_{\frac{1}{\Lambda}}(G^*) \right)=
 (\nabla u )^{-1} \left( V_\Lambda(G) \right ) \, ,
 \end{align*} 
 hence Lemma~\ref{l:H1bdVLambda} follows from 
 Lemma~\ref{l:H1bdOlambda} applied to $v$ and $G^*$.
\end{proof}

We are ready to prove Lemma~\ref{l:locVLambda}:

\begin{proof}[Proof of Lemma~\ref{l:locVLambda}]
As remarked in \cite[Proposition~3.6]{Lledos}, we have the 
maximum/minimum principle
\begin{align}\label{eq:maxminpple}
\partial (\nabla u(B_r) ) \subset \nabla u (\partial B_r),
\end{align}
which follows from \cite[Theorem~II]{HN59} since 
$\det(\nabla^2 u)$ does not change sign (as a consequence of 
$\nabla^s G(\nabla u)\nabla^2 u$ being trace-free).

We denote $M=\|\nabla u\|_\infty$ and prove Lemma~\ref{l:locVLambda} by contradiction:
assume that  $\delta\in (0,1/2] $
is such that $\nabla u(B_\delta)$ intersects both $B_{3\rho}(\xi_0)$ and
$\overline B_M \setminus B_{4\rho}(\xi_0)$.

Since 
$\nabla u(B_1) \cap B_{\rho}(\xi_0) = \emptyset $,
this implies that, for any $r\in (\delta,1)$ the boundary of
$\nabla u(B_r)$
 intersects both 
$B_{3\rho}(\xi_0)$ and
$\overline B_M\setminus B_{4\rho}(\xi_0)$.  
Indeed, $\nabla u(B_r)$ contains a point $\zeta$ in $B_{3\rho}(\xi_0)$ and does not contain $\xi_0$, so on the segment $[\xi_0,\zeta]\subset B_{3\rho}(\xi_0)$ there must be a point belonging to the boundary of $\nabla u(B_r)$.
Similarly,
$\nabla u(B_r)$ contains a point $\zeta$ in $\overline B_M\setminus B_{4\rho}(\xi_0)$,
and, since $\nabla u(B_r)\subset\overline B_M$,
 on the half line $\lbrace \zeta +t(\zeta-\xi_0)\rbrace_{t\geq 0}\subset \R^2\setminus B_{4\rho}(\xi_0)$ there must be a point
 belonging to the boundary of $\nabla u(B_r)$ (and then automatically also to $\overline B_M$).

Thanks to the maximum/minimum principle \eqref{eq:maxminpple}
we deduce that
$\nabla u(\partial B_r)$
 intersects both 
$B_{3\rho}(\xi_0)$ and
$\overline B_M\setminus B_{4\rho}(\xi_0)$.
Define the sets
\begin{align*}
 \overline{\Sigma}= G \left( \overline B_M \setminus B_{4\rho}(\xi_0) \right) \quad 
 \underline{\Sigma} =  G \left( B_{3\rho}(\xi_0) \right)   
 \, ,
\end{align*}
and note that, by definition of the modulus of monotony $\omega_G$, we have
\begin{align*}
\dist(\overline\Sigma,\underline\Sigma)
\geq \eta := \inf_{\rho\leq t\leq M+ 3\rho}\frac{\omega_G(t)}{t} >0. 
\end{align*}

Let $\mathcal{R} \in C^1(\R^2)$  such that 
 $|\nabla \mathcal R|\leq  \frac{4}{\eta}\mathbf 1_{\R^2\setminus(\overline\Sigma \cup \underline{\Sigma})}$ and
\begin{align*}
 \mathcal{R}(\xi) = 
\begin{cases}
0 &\text{ if }\xi\in\overline\Sigma,\\
1 &\text{ if }\xi\in\underline\Sigma.
\end{cases}
\end{align*}
Recall that
$\nabla u(\partial B_r)$
 intersects both 
$B_{3\rho}(\xi_0)$ and
$\overline B_M\setminus B_{4\rho}(\xi_0)$.
This implies that $\mathcal R(G(\nabla u)))$ takes both the value 0 and the value 1 on $\partial B_r$, and therefore, by the mean value theorem,
\begin{align*}
1\leq \int_{\partial B_r} \left \vert \nabla \left( \mathcal{R}(G(\nabla u)) \right) \right \vert \, ds
\leq 
\sqrt{2 \pi r} \left (\int_{\partial B_r} \left \vert \nabla \left( \mathcal{R}(G(\nabla u)) \right) \right \vert^2\, ds \right)^{\frac{1}{2}}
\end{align*}
Dividing by $\sqrt r$, squaring and and integrating it on $[ \delta, 1/2 ]$ we find
\begin{align*}
\log\left(\frac{1}{2\delta} \right)
&
\leq 2\pi 
\int_{B_{1/2}} \left \vert \nabla \left( \mathcal{R}(G(\nabla u)) \right) \right \vert^2 dx
\\
&
\leq 
\frac{32\pi}{\eta^2}
\int_{B_{1/2} \cap (\nabla u)^{-1}(B_{4 \rho}(\xi_0))}  \left \vert \nabla (G(\nabla u)) \right \vert^2 dx.
\end{align*}
The last inequality follows from the chain rule, the fact that $|\nabla\mathcal R|=0$  on $\overline\Sigma=G( \overline B_M \setminus B_{4\rho}(\xi_0) )$, and the inequality $|\nabla\mathcal R|\leq 4/\eta$.
Recalling that $B_{4\rho}(\xi_0)\subset V_\Lambda(G)$, we can use Lemma~\ref{l:H1bdVLambda} 
to deduce
\begin{align*}
\log\left(\frac{1}{2\delta} \right)
\leq \frac{32\pi}{\eta^2} 
\int_{B_{1/2} \cap (\nabla u)^{-1}(V_\Lambda(G))} 
\left \vert \nabla (G(\nabla u)) \right \vert^2 dx
\leq\frac{32\pi C}{\eta^2}.
\end{align*}
For $\delta < \exp(-32\pi C/\eta^2)/2$ this is impossible, 
and the conclusion of Lemma~\ref{l:locVLambda} is therefore verified.
\end{proof}

\subsection{Proof of the a priori estimate}

Before proving  Proposition~\ref{p:apriori}, we recall the localization lemma \cite[Lemma~3.1]{DSS} near $O_\lambda$.

\begin{lemma}\label{l:locOlambda}
Let $u$ a solution of  $\dv G(\nabla u)=0 $ in $B_1$ and assume that  
\[ \nabla u (B_1) \cap B_{\rho}(\xi_0) = \emptyset \text{ } \text{ and } \text{ } B_{4 \rho}(\xi_0)  \subset O_{\lambda}(G)\, , \]
for some $\lambda,\rho > 0$ and $\xi_0\in\R^2$.
Then we have
\begin{align*}
\text{either }
&
\nabla u(B_{\delta}) \subset B_{4 \rho}(\xi_0), 
\quad
\text{or }
\nabla u (B_{\delta}) \cap B_{3 \rho}(\xi_0) = \emptyset \, ,
\end{align*}
for some $\delta>0$ depending on $\lambda$, $\rho$, and $\| G(\nabla u) \|_{L^2(B_1)}$.
\end{lemma}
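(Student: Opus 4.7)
The plan is to mirror the proof of Lemma~\ref{l:locVLambda} that was just carried out, with the simplification that no duality is needed: since we already have the Caccioppoli estimate of Lemma~\ref{l:H1bdOlambda} directly on $\nabla u$ in the preimage of $O_\lambda(G)$, we can apply the same geometric/maximum-principle argument to $\nabla u$ itself rather than to its dual $\nabla v = iG(\nabla u)$.

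First, I would proceed by contradiction, assuming $\delta\in(0,1/2]$ is such that $\nabla u(B_\delta)$ meets both $B_{3\rho}(\xi_0)$ and $\overline B_M\setminus B_{4\rho}(\xi_0)$, with $M=\|\nabla u\|_\infty$. Since by assumption $\nabla u(B_1)\cap B_\rho(\xi_0)=\emptyset$, the same segment/half-line argument used in the proof of Lemma~\ref{l:locVLambda} shows that for every $r\in(\delta,1/2]$, $\nabla u(B_r)$ has boundary points in both $B_{3\rho}(\xi_0)$ and $\overline B_M\setminus B_{4\rho}(\xi_0)$. The maximum/minimum principle \eqref{eq:maxminpple}, which was already verified to hold for any smooth solution of $\dv G(\nabla u)=0$, then transfers this to $\nabla u(\partial B_r)$.

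Next, I would choose a smooth cut-off $\mathcal R\in C^1(\R^2)$ with $\mathcal R\equiv 1$ on $\overline B_{3\rho}(\xi_0)$, $\mathcal R\equiv 0$ outside $B_{4\rho}(\xi_0)$, and $|\nabla\mathcal R|\leq C/\rho$ supported in the annulus $B_{4\rho}(\xi_0)\setminus B_{3\rho}(\xi_0)$. The composition $\mathcal R(\nabla u)$ then takes both values $0$ and $1$ on every circle $\partial B_r$ with $r\in(\delta,1/2]$, so the mean value theorem combined with Cauchy--Schwarz gives
\begin{align*}
1 \leq \int_{\partial B_r}|\nabla(\mathcal R(\nabla u))|\,ds \leq \sqrt{2\pi r}\left(\int_{\partial B_r}|\nabla(\mathcal R(\nabla u))|^2\,ds\right)^{1/2}.
\end{align*}
Dividing by $\sqrt r$, squaring and integrating in $r$ over $[\delta,1/2]$ yields
\begin{align*}
\log\!\left(\tfrac{1}{2\delta}\right)\leq 2\pi\int_{B_{1/2}}|\nabla(\mathcal R(\nabla u))|^2\,dx\leq \frac{C}{\rho^2}\int_{B_{1/2}\cap(\nabla u)^{-1}(B_{4\rho}(\xi_0))}|\nabla^2 u|^2\,dx,
\end{align*}
where the last inequality uses the chain rule together with the fact that $\nabla\mathcal R$ is supported in $B_{4\rho}(\xi_0)$ and bounded by $C/\rho$.

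Finally, since $B_{4\rho}(\xi_0)\subset O_\lambda(G)$ by hypothesis, the Caccioppoli estimate of Lemma~\ref{l:H1bdOlambda} bounds the right-hand side by $(C'/(\lambda^2\rho^2))\|G(\nabla u)\|_{L^2(B_1)}^2$, giving a bound on $\log(1/(2\delta))$ in terms of $\lambda$, $\rho$ and $\|G(\nabla u)\|_{L^2(B_1)}$ only. Choosing $\delta$ strictly smaller than the corresponding explicit threshold produces the required contradiction. There is no real obstacle here beyond verifying that the argument of Lemma~\ref{l:locVLambda} goes through verbatim without the duality step; the only notational difference is that one applies $\mathcal R$ and the Caccioppoli estimate directly to $\nabla u$ rather than to $G(\nabla u)$, so the factor $\eta$ coming from the modulus of monotony does not appear and the dependence reduces to $\lambda$, $\rho$ and $\|G(\nabla u)\|_{L^2(B_1)}$ as claimed.
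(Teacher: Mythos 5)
Your proposal is correct and is exactly the argument the paper has in mind: it explicitly states that, after replacing the $H^1$ estimate with Lemma~\ref{l:H1bdOlambda}, the proof of Lemma~\ref{l:locOlambda} is completed with the same maximum/minimum-principle and circle-oscillation argument as Lemma~\ref{l:locVLambda}. You also correctly identified the one simplification, namely that since the cut-off $\mathcal R$ is applied directly in $\xi$-space rather than to $G(\xi)$, the separation $\rho$ between $B_{3\rho}(\xi_0)$ and $\R^2\setminus B_{4\rho}(\xi_0)$ replaces the modulus-of-monotony quantity $\eta$, so the final dependence on $\omega_G$ disappears.
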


In \cite{DSS} this is proved in the variational setting $G=\nabla F$, 
but the only step that needs minor adaptation is the $H^1$ estimate \cite[Proposition~3.3]{DSS},
which we have adapted here in Lemma~\ref{l:H1bdOlambda}.
Then the proof of Lemma~\ref{l:locOlambda} 
is completed using arguments similar to Lemma~\ref{l:locVLambda}, 
based on the maximum/minimum principle \eqref{eq:maxminpple} and the estimate of oscillations on the circles $\partial B_r$.

\begin{proof}[Proof of Proposition \ref{p:apriori}]

Let $u$ be a smooth  solution of $\dv G(\nabla u)=0 $
 in $B_1$ with $|\nabla u|\leq M$.
By assumption we have
\begin{align*} 
\overline B_{2M}\subset  V_{\Lambda}(G) \cup O_{\lambda}(G)\cup \bigcup_{j=1}^N B_{r}(\xi_j)
\, ,
\end{align*} 
and we fix a Lebesgue number $\eta\in (0,r)$ of this open covering, with the property that
any ball $B_\eta(\xi)$ centered at $\xi\in \overline B_{2M}$ is contained in 
$V_{\Lambda}(G)$, $O_{\lambda}(G)$, or $ B_{r}(\xi_j)$ for some $j\in\lbrace 1,\ldots,N\rbrace$. 
We set $\rho=\eta/4$.
The ball $\overline B_{2M}$ can be covered by a finite number of balls of radius $\rho$.
Removing the balls that are contained in one of the $B_r(\xi_j)$, we are left with a covering
\begin{align*}
\overline B_{2M}\setminus \bigcup_{j=1}^N B_{r}(\xi_j) \subset \bigcup_{k=1}^K B^k_{\rho},
\end{align*}
with $K\leq c M^2/\eta^2$ for some universal constant $c>0$, and the property that each ball $B^k_{4\rho}$ satisfies
\begin{align*}
B^k_{4\rho}\subset V_\Lambda(G)
\quad
\text{or}
\quad 
B^k_{4\rho}\subset O_\lambda(G).
\end{align*}
Since $ \nabla u (B_1) \subset \overline{B_M} $ , there exists a ball $B^k_{4\rho} \subset B_{2M} \setminus \overline{B_M} $ such that : 
\[ \nabla u (B_1) \cap B_{\rho}^k = \emptyset           \]
Since
 $ B^k_{4\rho} \subset V_{\Lambda}(G)$ or $B^k_{4\rho} \subset O_{\lambda}(G) $, 
 we can apply Lemma~\ref{l:locVLambda} or Lemma~\ref{l:locOlambda} 
  to ensure the existence of some $\delta>0$ such that
\begin{align*}
\text{either }\nabla u(B_\delta)\subset B_{4\rho}^k
\quad\text{or }
\nabla u(B_\delta)\cap B_{3\rho}^k=\emptyset.
\end{align*}  
If the first case occurs, then we are done since $4\rho=\eta <r$. 
If the second case occurs, 
we infer that 
$ \nabla u(B_{\delta}) \cap B^j_{\rho} = \emptyset $ for all neighboring balls $B_{\rho}^j$ such that $ B_{\rho}^j \cap B_{\rho}^k \neq \emptyset $. 
Then we can apply again Lemma~\ref{l:locVLambda} or Lemma~\ref{l:locVLambda}
to the rescaled function $\delta^{-1}u(\delta \cdot)$ 
and these neighboring balls $B_{\rho}^j$. 

We iterate this argument: if at some step we reach the first case, we are done.
Otherwise, since $\overline B_{2M}\setminus \bigcup_{j=1}^N B_r(\xi_j)$ is connected, 
we eventually cover it with the neighboring balls added at each step, and deduce that
$\nabla u(B_{\delta'})\subset \bigcup_{j=1}^N B_r(\xi_j)$.
Here $\delta'=\delta^K$ for $\delta$ as in Lemma~\ref{l:locVLambda} and Lemma~\ref{l:locOlambda}.
By connectedness, $\nabla u(B_{\delta'})$ is contained in one of the balls $B_r(\xi_j)$, 
and this concludes the proof.
 \end{proof}

\begin{remark}\label{r:connect}
If, in the assumptions of Proposition~\ref{p:apriori}, the union of the balls $B_r(\xi_j)$ is replaced by any open subset $U\subset\R^2$ such that $\overline B_{2M}\setminus U$ is connected, then the same proof shows, for any $r>0$, the existence of $\delta >0$ such that either $\diam(\nabla u(B_\delta))<r$ or $\nabla u(B_\delta)\subset U$.
\end{remark}

\section{Proof of Theorem~\ref{t:gen}}\label{s:proof}

In this section, we prove Theorem~\ref{t:gen} using the a priori estimate of Proposition~\ref{p:apriori} and an approximation argument. 
This is quite standard, see e.g. \cite{CF14a,Lledos}, 
but some details seem needed to make sure it applies in our situation.
For the reader's convenience we recall here the statement of Theorem~\ref{t:gen} .

\begin{theorem}
Let $G \colon \mathbb{R}^2 \to \mathbb{R}^2$ a continuous strictly monotone vector field such that  $\mathcal{S}(G) \cap \mathcal{D}(G) \cap \overline{B_M}$ is a finite set. 
Then any $M$-Lipschitz solution $u$ of $\dv G(\nabla u)=0 $ in $B_1$ is $C^1$.
\end{theorem}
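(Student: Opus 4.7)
My plan is to combine the quantitative a priori estimate of Proposition~\ref{p:apriori} with an approximation procedure, following the scheme suggested at the end of the introduction. The first step is to produce a covering of $\overline B_{2M}$ by open sets of the form required by Proposition~\ref{p:apriori}. To this end, I would first modify $G$ outside $B_{3M/2}$ to interpolate smoothly with the identity on $\R^2\setminus B_{2M}$: this does not affect any $M$-Lipschitz solution, and $\mathcal D(G)\cap\mathcal S(G)\cap\overline B_{2M}$ remains finite. Given a target oscillation $r>0$ (small enough that the points of this finite set have mutual separation $\geq 4r$), list it as $\{\xi_1,\ldots,\xi_N\}$ and set $K=\overline B_{2M}\setminus\bigcup_j B_r(\xi_j)$. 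Then $K$ is compact and disjoint from $\mathcal D(G)\cap\mathcal S(G)$, so each $\xi\in K$ lies in some $O_{\lambda'}(G)$ or $V_{\Lambda'}(G)$; extracting a finite subcover and choosing $\lambda,\Lambda>0$ smaller than all parameters produced yields
\[
\overline B_{2M}\subset O_\lambda(G)\cup V_\Lambda(G)\cup\bigcup_{j=1}^N B_r(\xi_j).
\]

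Next, I would approximate. Let $G_\varepsilon:=G\ast\rho_\varepsilon+\varepsilon\,\mathrm{Id}$ with $\rho_\varepsilon$ a standard mollifier; since convolution preserves monotonicity (by a change of variables in the averaging) and the term $\varepsilon\,\mathrm{Id}$ provides both ellipticity and global Lipschitz continuity, $G_\varepsilon$ is smooth and strongly monotone. Fix $\sigma>0$ small and, for each $\varepsilon$, solve the Dirichlet problem $\dv G_\varepsilon(\nabla u_\varepsilon)=0$ in $B_{1-\sigma}$ with boundary data $u|_{\partial B_{1-\sigma}}$: classical theory provides a unique smooth $u_\varepsilon$ with a uniform $W^{1,\infty}$ bound, and a Minty argument based on the strict monotony of $G$ gives $\nabla u_\varepsilon\to\nabla u$ in $L^p$ and almost everywhere along a subsequence. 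The delicate step is to show that the covering above persists for $G_\varepsilon$ with slightly relaxed constants: for $\varepsilon<\varepsilon_0$,
\[
\overline B_{2M}\subset O_{\lambda/2}(G_\varepsilon)\cup V_{\Lambda/2}(G_\varepsilon)\cup\bigcup_{j=1}^N B_r(\xi_j).
\]
Granted this, Proposition~\ref{p:apriori} applied to $(u_\varepsilon,G_\varepsilon)$ yields $\delta>0$, uniform in $\varepsilon$, such that $\diam(\nabla u_\varepsilon(B_\delta(x_0)))\leq r$ whenever $B_\delta(x_0)\subset B_{1-\sigma}$. All inputs of Proposition~\ref{p:apriori}---the Lebesgue number of the covering, the ellipticity constants $\lambda/2,\Lambda/2,M$, the $L^2$ norms of $\nabla u_\varepsilon$ and $G_\varepsilon(\nabla u_\varepsilon)$, and the modulus of monotony $\omega_{G_\varepsilon}$ on the relevant interval---are bounded uniformly, using uniform continuity of $G$ on compact sets together with standard energy estimates.

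The uniform oscillation bound then yields equicontinuity of $\{\nabla u_\varepsilon\}$ on compact subsets of $B_{1-\sigma}$; Arzel\`a--Ascoli combined with the a.e.\ convergence $\nabla u_\varepsilon\to\nabla u$ furnishes a continuous representative $w$ of $\nabla u$ satisfying $\diam(w(B_\delta(x_0)))\leq r$. Since $r>0$ is arbitrary, $\nabla u$ is continuous in $B_{1-\sigma}$, and letting $\sigma\to 0$ yields $u\in C^1(B_1)$.

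The main obstacle I foresee is the stability of the covering under mollification. The sets $O_\lambda(G)$ and $V_\Lambda(G)$ are defined via pointwise liminfs of finite-difference quotients of $G$, and such conditions are not automatically preserved by convolution when $G$ is only continuous. My plan is to exploit their characterization as topological interiors: on a compact subset of $O_\lambda(G)$ the defining inequality $\langle G(\xi+\zeta)-G(\xi),\zeta\rangle\geq\lambda|\zeta|^2$ should hold \emph{uniformly} for $|\zeta|$ below some threshold $\zeta_0$, and this uniform version survives convolution with $\rho_\varepsilon$ up to an arbitrarily small loss in $\lambda$ once $\varepsilon\ll\zeta_0$. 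The analogous issue for $V_\Lambda$ should be handled by similar reasoning applied to the quotient defining $\mathcal S(G)$, with the duality of Lemma~\ref{l:G*} providing a useful alternative viewpoint if needed.
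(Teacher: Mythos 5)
Your overall scheme matches the paper's: modify $G$ at infinity, mollify and add $\varepsilon\,\mathrm{Id}$, prove that the covering $\overline B_{2M}\subset O_\lambda\cup V_\Lambda\cup\bigcup B_r(\xi_j)$ persists for $G_\varepsilon$ with $\varepsilon$-independent constants, apply Proposition~\ref{p:apriori} to the approximating solutions, and pass to the limit. However, several of the steps you flag as ``delicate'' are in fact where the work lies, and your proposed reasoning for them has gaps.

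First, the modification at infinity. Interpolating $G$ linearly with the identity, say $\widetilde G=(1-\eta)G+\eta\,\mathrm{Id}$, does \emph{not} obviously preserve strict monotonicity: the symmetric gradient picks up a term $\nabla\eta\odot(\mathrm{Id}-G)$ of uncontrolled sign, and $\widetilde G$ could fail to be monotone on the annulus where $\eta$ varies. The paper instead sets $\widetilde G=\eta G+\nabla F$ with $F$ a specific convex function vanishing on $B_M$, chosen so that the cross term $\nabla\eta\odot G$ is dominated by $\nabla^2 F$. Moreover, one must check that the modification does not enlarge $\mathcal D\cap\mathcal S$ inside $\overline B_{2M}$: this is genuinely nontrivial for the $\mathcal S$ part, because $\mathcal S$ is defined through the inverse $G^{-1}$ (equivalently, the conjugate equation), and adding a convex gradient to $G$ changes $G^{-1}$ in a nonlinear way. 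Lemma~\ref{l:modif} devotes a dedicated argument to showing $\mathcal S(\widetilde G)\cap\overline B_M\subset\mathcal S(G)\cup\mathcal D(G)$; your proposal treats this as automatic.

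Second, the assertion that ``the term $\varepsilon\,\mathrm{Id}$ provides global Lipschitz continuity'' is incorrect. That term gives a lower ellipticity bound $\nabla^s G_\varepsilon\geq\varepsilon$, but global Lipschitzness of $G_\varepsilon$ (needed for the strong monotonicity \eqref{eq:strongmonot} required by Proposition~\ref{p:apriori}) comes from the at-infinity behavior of the \emph{modified} $G$: after modification $G$ has linear growth and bounded gradient outside a large ball, so that $|\nabla(G\ast\rho_\varepsilon)|$ is bounded by $4c$ at infinity and by $\varepsilon^{-1}\|G\|_{L^\infty}$ on the compact core.

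Third, and most importantly, the claim that on a compact subset of $O_\lambda(G)$ the inequality $\langle G(\xi+\zeta)-G(\xi),\zeta\rangle\geq\lambda|\zeta|^2$ holds \emph{uniformly} for small $|\zeta|$ is false in general: $O_\lambda$ is defined via a pointwise $\liminf$, and the rate at which the quotient approaches its liminf can degenerate along the compact set. The paper's Lemma~\ref{l:approxG} does not assert uniformity; it fixes $\xi_0$ with $B_{2\varepsilon}(\xi_0)\subset O_\lambda(G)$, writes the defect as a function $\varphi(\eta,|\zeta|)$ for each $\xi\in B_\varepsilon(\xi_0)$, and uses \emph{dominated convergence} of $\int\varphi(\eta,r)\rho(\eta)\,d\eta$ as $r\to0$ to conclude $\xi_0\in O_\lambda(G_\varepsilon)$. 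For $V_\Lambda$ a genuinely different argument is needed: the paper first observes (Lemma~\ref{l:VLambda}) that $G$ is $\Lambda$-Lipschitz, hence a.e.\ differentiable, on any ball inside $V_\Lambda(G)$, and then mollifies the pointwise a.e.\ inequality $|\nabla G\,\zeta|^2\leq\Lambda\langle\nabla G\,\zeta,\zeta\rangle$ using Jensen. Your ``similar reasoning'' sketch does not supply this; without it the covering persistence --- which you correctly identify as the crux --- is not established.
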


\begin{proof}
Let $u$ and $G$ as in the Theorem. 
Modifying $G$ ouside $\overline B_M$ does not change the equation satisfied by $u$.
Thanks to Lemma~\ref{l:modif} 
we can therefore assume that 
\begin{align*}
\mathcal D(G)\cap \mathcal S(G)\subset \overline B_M,
\end{align*}
and apply Lemma~\ref{l:approxG},
to obtain smooth strongly monotone vector fields $G_\e$ such that
$\omega_{G_{\e}} \geq \omega_G$ and
\begin{align}\label{eq:GepsOV}
\begin{aligned}
&
B_{2\e}(\xi)\subset O_\lambda(G)\;\Rightarrow\; \xi\in O_\lambda(G_\e)\, ,
\\
& 
B_{2\e}(\xi)\subset V_{\Lambda}(G)\;\Rightarrow\; \xi\in V_{\Lambda+\e}(G_\e)\, .
\end{aligned}
\end{align}
Thanks
 to Lemma~\ref{l:approxu}, the smooth solutions $u_\e$ of 
\begin{align*}
\dv G_\e(\nabla u_\e)=0\;\text{ in }B_1,\quad u_\e=u\;\text{ on }\partial B_1,
\end{align*}
satisfy $|\nabla u_\e|\leq \widetilde M$ in $B_{1/2}$ for some $\widetilde M\geq M$, and converge to $u$ in $H^1(B_1)$.

Rescaling $B_{1/2}$ to $B_1$, we assume that $|\nabla u_\e|\leq \widetilde M$ in $B_1$ and apply Proposition~\ref{p:apriori} to $u_\e$.
For any given $r>0$, we will check that the radius $\delta>0$ obtained that way,
such that $\diam(\nabla u_\e(B_\delta))<r$, does not depend on $\e$.
Passing to the limit will then prove continuity of $\nabla u$ at $0$ (and, 
translating and rescaling,
 at any point $x\in B_1$).

Denote
\begin{align*}
\mathcal D(G)\cap\mathcal S(G) =\lbrace \xi_1,\ldots,\xi_N\rbrace
\subset\overline B_{\widetilde M}.
\end{align*}
Let $r>0$. 
Since $\mathcal D$ is the complement of $\bigcup_{\lambda>0} O_\lambda$ and $\mathcal S$ the complement of $\bigcup_{\Lambda>0} V_\Lambda$, we can find $\lambda,\Lambda>0$ such that
\begin{align*}
\overline B_{2\widetilde M}
\subset O_\lambda(G)\cup V_\Lambda(G) \cup \bigcup_{j=1}^N B_{r}(\xi_j).
\end{align*}
We may also fix a Lebesgue number $\eta\in (0,r/2)$ 
such that
any ball 
$B_{4\eta}(\xi)$ 
with $|\xi|\leq 2\widetilde M$
must be contained in 
$O_\lambda(G)$ or 
$V_\Lambda(G)$
or one of the balls
$B_{r}(\xi_j)$ for some $j=1,\ldots,N$.
Thanks to the properties \eqref{eq:GepsOV} of $G_\e$, 
for any $0<\e<\min(\eta,\Lambda)$  we have
\begin{align*}
\overline B_{2\widetilde M}
\subset O_\lambda(G_\e)\cup V_{2\Lambda}(G_\e) \cup \bigcup_{j=1}^N B_{r}(\xi_j),
\end{align*}
and $\eta$ has the Lebesgue number property that
any ball 
$B_{\eta}(\xi)$ 
with $|\xi|\leq 2\widetilde M$
must be contained in 
$O_\lambda(G_\e)$ or 
$V_\Lambda(G_\e)$
or one of the balls
$B_{r}(\xi_j)$ for some $j=1,\ldots,N$.
Moreover, 
if $c>0$ is such that $\omega_G(t)/t\geq c$ for all $t\in [\eta/4,\widetilde M+\eta]$, then
we have $\omega_{G_\e}(t)/t\geq \omega_G(t)/t \geq c$ for all  $t\in [\eta/4,\widetilde M+\eta]$.
Applying Proposition~\ref{p:apriori} we obtain therefore a radius $\delta>0$, independent of $\e$, such that
$\diam(\nabla u_\e(B_\delta))\leq r$.
\end{proof}

\section{Nonlinear Beltrami equations}\label{s:beltrami}

In this section we describe how  
 to transform Theorem~\ref{t:gen} about degenerate elliptic equations $\dv G(\nabla u)=0$ into Theorem~\ref{t:beltrami} about degenerate Beltrami equations $f_{\bar z}=H(f_z)$.
 This relies on Minty's correspondence \cite{Minty} and is described thoroughly in \cite[\S~16]{AIM}.
 For the readers' convenience, we recall here and sketch the proof of
 the basic features that we are going to use.

\begin{proposition}\label{p:H}
Let  $H\colon\C\to\C$ a strictly $1$-Lipschitz function, that is,
\begin{align*}
|H(\xi)-H(\zeta)|<|\xi -\zeta|\quad\forall \xi\neq\zeta\in\C.
\end{align*}
Then:
\begin{enumerate}
\item One may modify $H$ outside any arbitrary compact in order to ensure
\begin{align}\label{eq:Hinfty}
\lim_{|z|\to \infty} 
\left(
|z| \pm \langle \overline H(z),\frac{z}{|z|}\rangle 
\right)= +\infty\, ,
\end{align}
which we assume from now on.
\item The maps $F,F_*\colon \mathbb C\to\C$ given by
\begin{align}\label{eq:FF*}
F( z)= \frac{ H(z) +\bar z }{2},
\quad 
F_*(z)=\frac{ H(z)-\bar z}{2i},
\end{align}
are homeomorphisms.
\item The maps $G,G^*$ given by
\begin{align}\label{eq:GF}
G=-i F_*\circ F^{-1},\qquad G^* = i F \circ F_*^{-1},
\end{align}
are continuous, strictly monotone vector fields,
and for any complex function $f\colon B_1\to\C$, we have the implication
\begin{align}\label{eq:fuv}
f_{\bar z}=H(f_z)
\quad\Longrightarrow\quad
\dv G(\frac 12 \nabla u) =\dv G^*(\frac 12 \nabla v)=0\ ,
\end{align}
where $u=\re f$ and $v=\im f$.
\item Under this correspondence, the sets $\mathcal D,\mathcal S$ \eqref{eq:DS}
are transformed as
\begin{align}\label{eq:GammaDS}
\begin{aligned}
&
\mathcal D(G)= F(\Gamma_+),\quad \mathcal S(G)=F(\Gamma_-),
\\
&
\mathcal D(G^*)= F_*(\Gamma_-),\quad \mathcal S(G^*)=F_*(\Gamma_+).
\end{aligned}
\end{align}
where $\Gamma_\pm$ are as in Theorem~\ref{t:beltrami}.
\end{enumerate}
\end{proposition}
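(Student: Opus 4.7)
The plan is to treat the four items in turn: item (1) ensures coercivity, after which items (2)--(4) reduce to a handful of complex-variable identities. For (1), I would compose $H$ with the nearest-point projection: set $\tilde H := H \circ \pi_R$, where $\pi_R \colon \C \to \overline B_R$ is the metric projection and $R$ is large enough that the prescribed compact lies in $B_R$. Then $\tilde H = H$ on $B_R$; strict $1$-Lipschitz persists (if $\pi_R(z) = \pi_R(w)$ with $z\neq w$ then both sides equal $0 < |z-w|$; otherwise strict $1$-Lipschitz of $H$ combined with $1$-Lipschitz of $\pi_R$ gives $|\tilde H(z) - \tilde H(w)| < |\pi_R(z) - \pi_R(w)| \le |z-w|$); and since $|\tilde H|$ is bounded by $\max_{\overline B_R}|H|$, condition \eqref{eq:Hinfty} is immediate.

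For (2), continuity of $F$ and $F_*$ is obvious and injectivity is immediate (for instance $F(z) = F(w)$ rewrites as $H(z) - H(w) = \overline{w-z}$, forcing equal moduli, hence $z=w$). Setting $\theta(z) := \langle \overline{H(z)}, z/|z|\rangle$, Cauchy--Schwarz gives $|\theta(z)| \le |H(z)|$, and one verifies the algebraic identities
\begin{align*}
4|F(z)|^2 &= \bigl(|H(z)|^2 - \theta(z)^2\bigr) + \bigl(|z| + \theta(z)\bigr)^2,\\
4|F_*(z)|^2 &= \bigl(|H(z)|^2 - \theta(z)^2\bigr) + \bigl(|z| - \theta(z)\bigr)^2,
\end{align*}
which exhibit $4|F|^2$ and $4|F_*|^2$ as sums of two nonnegative terms, the second of which diverges by \eqref{eq:Hinfty}. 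Hence $|F|, |F_*| \to \infty$; invariance of domain makes $F, F_*$ open injections, and properness with connectedness of $\C$ yields surjectivity, so both are homeomorphisms.

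For (3), given $z, w \in \C$ set $A := H(z) - H(w)$, $B := z - w$, $\xi := F(z)$, $\zeta := F(w)$. Expanding with $\langle \alpha, \beta\rangle = \re(\alpha\bar\beta)$ yields $\langle G(\xi) - G(\zeta), \xi - \zeta\rangle = \tfrac{1}{4}(|B|^2 - |A|^2)$, which is strictly positive when $z \neq w$ by strict $1$-Lipschitz of $H$; the same computation does $G^*$. For the divergence equation, the Wirtinger identities read $\nabla u = \overline{f_z} + f_{\bar z}$ and $\nabla v = i(\overline{f_z} - f_{\bar z})$ under the identification $\C \cong \R^2$, so plugging in $f_{\bar z} = H(f_z)$ gives exactly $\tfrac{1}{2}\nabla u = F(f_z)$ and $\tfrac{1}{2}\nabla v = F_*(f_z)$. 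Consequently $G(\tfrac{1}{2}\nabla u) = -iF_*(f_z) = -\tfrac{i}{2}\nabla v$, which is divergence-free as a $-\pi/2$-rotation of a gradient; likewise $G^*(\tfrac{1}{2}\nabla v) = \tfrac{i}{2}\nabla u$.

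For (4), with $\xi = F(z)$, $\xi + \zeta = F(z+B)$ and $L := L_H(z, B) = A/\overline B$, the same algebraic manipulations give $|\zeta|^2 = \tfrac{1}{4}|B|^2 |1+L|^2$, $|G(\xi+\zeta) - G(\xi)|^2 = \tfrac{1}{4}|B|^2 |1-L|^2$ and $\langle G(\xi+\zeta) - G(\xi), \zeta\rangle = \tfrac{1}{4}|B|^2(1 - |L|^2)$, so the two ratios in \eqref{eq:DS} collapse to $(1-|L|^2)/|1+L|^2$ and $(1-|L|^2)/|1-L|^2$, which are precisely the quantities defining $\Gamma_+$ and $\Gamma_-$. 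Since $F$ is a homeomorphism, $\liminf$, closure, and intersection all transfer cleanly, giving $\mathcal D(G) = F(\Gamma_+)$ and $\mathcal S(G) = F(\Gamma_-)$; the identities for $G^*$ follow from the mirror computation starting at $\eta = F_*(z)$. I expect the main subtlety to be item (1), where one must preserve strict $1$-Lipschitz while securing \eqref{eq:Hinfty}; once this setup is in place, the remaining items amount to bookkeeping with the maps $F$ and $F_*$.
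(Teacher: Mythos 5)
Your proof is correct and covers all four items. For item~(1), you use the nearest-point projection $\pi_R$ onto $\overline B_R$ where the paper uses a smooth radial contraction $\Phi(z)=\chi(|z|)z$; both are $1$-Lipschitz maps equal to the identity on $B_R$ with bounded image, so both give the desired modification, and smoothness of $\widetilde H$ is not needed by the proposition (the paper's smooth $\Phi$ is a stylistic choice, not a requirement). Your handling of the degenerate case $\pi_R(z)=\pi_R(w)$ is the right care to take. For item~(2), you derive injectivity directly from strict $1$-Lipschitzness, then establish $|F|,|F_*|\to\infty$ via the exact identities $4|F|^2=(|H|^2-\theta^2)+(|z|+\theta)^2$ and $4|F_*|^2=(|H|^2-\theta^2)+(|z|-\theta)^2$, and conclude via invariance of domain plus properness. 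The paper instead notes that $\overline F$ and $i\overline F_*$ are strictly monotone and coercive (coercivity is a one-line consequence of \eqref{eq:Hinfty}), and invokes the Minty--Browder theorem, which the paper already uses elsewhere (e.g.\ in Lemma~\ref{l:G*}); it then extracts continuity of the inverse from the same coercivity. Your topological route and the paper's monotone-operator route are equally valid and of comparable length. Items~(3) and~(4) match the paper's computations essentially identically: the Wirtinger bookkeeping $\tfrac12\nabla u=F(f_z)$, $\tfrac12\nabla v=F_*(f_z)$, the monotonicity identity $\langle D^\sigma G(\eta),\sigma\rangle=\tfrac14(|B|^2-|A|^2)$ (you are actually slightly more careful with the factor $1/4$ than the paper's display, which drops it), and the three collapse formulas in terms of $L_H$. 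One minor omission: in item~(3) you should explicitly note that continuity of $G,G^*$ follows from the homeomorphism property established in item~(2), since $G=-iF_*\circ F^{-1}$; the paper makes this one-line remark.
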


\begin{proof}[Proof of Proposition~\ref{p:H}]

1. 
For any $R>0$ one may pick a smooth function $\chi\colon [0,\infty)\to [0,1]$ such that $\chi\equiv 1$ on $[0,R]$, $-1\leq r\chi'(r)\leq 0$ for all $r\geq 0$,  and $\chi(r)\to 0$ as $r\to +\infty$.
Then the map $\Phi\colon \C\to\C$ given by $\Phi(z)=\chi(|z|)z$ 
equals the identity in $B_R$, 
is 1-Lipschitz because its differential at $z=re^{i\theta}$ is symmetric with eigenvalues $\chi(r)$ and $\chi(r)+ r\chi'(r)$,
and $\Phi(z)\to 0$ as $|z|\to +\infty$.
Thus $\widetilde H =H\circ\Phi$ equals $H$ in $B_R$, is strictly 1-Lipschitz, and $H(z)\to H(0)$ as $|z|\to +\infty$, which implies \eqref{eq:Hinfty}.

2. To check that $F,F_*$ are homeomorphisms,
one can remark that $\overline F$ and $i\overline F_*$  are strictly monotone and continuous,
and thanks to \eqref{eq:Hinfty} they are coercive:
\begin{align*}
\lim_{|z|\to +\infty} \frac{\langle \overline F(z),z\rangle}{|z|} 
=
\lim_{|z|\to +\infty} \frac{\langle i\overline F_*(z),z\rangle}{|z|}
=+\infty
\,
.
\end{align*}
Hence the Minty-Browder theorem 
\cite[Theorem~9.14-1]{Ciarlet} ensures that they are invertible.
Continuity of their inverses is also a consequence of the coercivity:
if a sequence $(z_k)$ is such that $F(z_k)\to\xi$,
then coercivity forces $(z_k)$ to be bounded, and
by continuity of $F$ any converging subsequence must converge to $F^{-1}(\xi)$.

3. 
Note that $G,G^*$ are dual to each other in the sense of Lemma~\ref{l:G*}, that is,
$-iG^*(iG(\xi))=iG(-iG^*(\xi))=\xi$ for all $\xi\in\C$.

Continuity of $G,G^*$ follows from the previous item.
For any $\xi\in \C$ and $\zeta\neq 0$, 
letting 
 $\eta=F(\xi)$ and
$\sigma=F(\xi+\zeta)-F(\xi)$, we have
\begin{align*}
\langle G(\eta +\sigma)-G(\eta),\sigma\rangle
=|\zeta|^2-|H(\xi+\zeta)-H(\xi)|^2 >0,
\end{align*}
so $G$ is strictly monotone, and similarly for $G^*$.
The implication
 \eqref{eq:fuv} 
follows by rewriting 
$f_{\bar z}=H(f_z)$, 
as 
\begin{align*}
2u_{\bar z} =H(f_z) +\overline{f_{z}}
\text{ and }
2i v_{\bar z} = H(f_z)-\overline{f_{z}},
\end{align*} 
that is,
\begin{align*}
u_{\bar z}= F(f_z)
\text{ and }
v_{\bar z}= F_*(f_z),
\end{align*}
or equivalently 
\begin{align*}
G(u_{\bar z})=-iv_{\bar z}
\text{ and }G^*(v_{\bar z})=iu_{\bar z},
\end{align*}
which are divergence free. 
More details can be found e.g. in \cite[Theorem~5]{AstalaClop}.

4.
For any $\xi\in \C$ and $\zeta\neq 0$, 
letting 
 $\eta=F(\xi)$ and
$\sigma=F(\xi+\zeta)-F(\xi)$,
we have the identities
\begin{align*}
\frac{\langle G(\eta +\sigma)-G(\eta),\sigma\rangle}{|\sigma|^2}
&
=\frac{1-|L_H(\xi,\zeta)|^2}{|1+L_H(\xi,\zeta)|^2}\, ,
\\
\frac{\langle G(\eta +\sigma)-G(\eta),\sigma\rangle}{|G(\eta +\sigma)-G(\eta)|^2}
&
=\frac{1-|L_H(\xi,\zeta)|^2}{|1-L_H(\xi,\zeta)|^2}\, ,
\end{align*}
where
\begin{align*}
L_H(\xi,\zeta)=\frac{H(\xi+\zeta)-H(\xi)}{\bar\zeta}.
\end{align*}
Recall moreover that
$F$ is a homeomorphism and
$\sigma=F(\xi+\zeta)-F(\xi)\to 0$ if and only if 
$\zeta=F^{-1}(F(\xi)+\sigma)-\xi \to 0$.
Therefore, these identities and the definitions \eqref{eq:DS} of $\mathcal D,\mathcal S$ 
 imply that
\begin{align}
\mathcal D(G)= F(\Gamma_+),\quad \mathcal S(G)=F(\Gamma_-),
\end{align}
with
\begin{align*}
\Gamma_\pm =\bigcap_{\lambda>0} \cl \left\lbrace
\xi\in\C\colon \liminf_{|\zeta|\to 0} \frac{1-|L_H(\xi,\zeta)|^2}{|1\pm L_H(\xi,\zeta)|^2}  \leq \lambda \right\rbrace,
\end{align*}
as in Theorem~\ref{t:beltrami}.
Similar calculations (or the duality of Lemma~\ref{l:G*}) give
$ \mathcal D(G^*)= F_*(\Gamma_-)$ and $\mathcal S(G^*)=F_*(\Gamma_+)$.
\end{proof}

\begin{remark}\label{r:minty}
Reciprocally, if $G\colon\R^2\to\R^2$ is continuous and strictly monotone, 
the Minty-Browder theorem ensures that $\psi\colon\R^2\to\R^2$ given by
\begin{align*}
 \psi(\xi) =\frac{\xi+G(\xi)}{2},
\end{align*} 
is a homeomorphism, so is its pointwise conjugate $\phi=\overline\psi$,
 and then the map $H\colon \C\to\C$ given
\begin{align*}
H(\phi(\xi))=\frac{\xi-G(\xi)}{2},
\end{align*}
is strictly $1$-Lipschitz. 
If it satisfies \eqref{eq:Hinfty}, then $G$ can be recovered as in Proposition~\ref{p:H}. 
\end{remark}

Thanks to \eqref{eq:fuv} and \eqref{eq:GammaDS},
it becomes apparent that
 Theorem~\ref{t:beltrami} is a consequence of Theorem~\ref{t:gen}.
In fact, keeping track of the role of $M$, we obtain the following  more precise version of Theorem~\ref{t:beltrami}.

\begin{theorem}\label{t:beltramiM}
Let $H \colon \mathbb{C} \to \mathbb{C}$ strictly $1$-Lipschitz, and $M>0$ such that 
$F(\Gamma_+)\cap F(\Gamma_-)\cap \overline B_M(\xi_0/2)$ and 
$F_*(\Gamma_+)\cap F_*(\Gamma_-)\cap \overline B_M(\xi_0/2i)$ are finite, where $\xi_0=H(0)$.
 Then any 
Lipschitz
 solution $f$ of $ f_{\bar{z}} = H(f_z)$ in $B_1$ with $|f_z|\leq M$ is $C^1$.
\end{theorem}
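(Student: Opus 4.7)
The plan is to derive Theorem~\ref{t:beltramiM} from Theorem~\ref{t:gen} applied separately to $u = \re f$ and $v = \im f$, using the correspondence of Proposition~\ref{p:H}. Since $|f_z|\leq M$, the equation involves only values of $H$ on $\overline B_M$, so I would first modify $H$ outside a sufficiently large ball (as in item (1) of Proposition~\ref{p:H}) to ensure the asymptotic condition \eqref{eq:Hinfty}; this changes neither $f$ nor the sets $\Gamma_\pm\cap \overline B_M$ that appear in the hypothesis. Item (3) of the proposition then furnishes continuous strictly monotone vector fields $G$ and $G^*$ together with
\begin{align*}
\dv G(\tfrac 12 \nabla u) = \dv G^*(\tfrac 12 \nabla v) = 0 \quad\text{in } B_1.
\end{align*}

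Next I would verify the gradient bounds. Identifying the complex number $u_x + iu_y$ with $2u_{\bar z}$, the identities $u_{\bar z} = F(f_z)$ and $v_{\bar z} = F_*(f_z)$ appearing in the proof of Proposition~\ref{p:H} read $\tfrac 12 \nabla u = F(f_z)$ and $\tfrac 12 \nabla v = F_*(f_z)$. Since $F(0) = \xi_0/2$ and $F_*(0) = \xi_0/(2i)$, and since $H$ is $1$-Lipschitz,
\begin{align*}
|F(f_z) - F(0)| \leq \tfrac 12 |H(f_z) - H(0)| + \tfrac 12 |f_z| \leq |f_z| \leq M,
\end{align*}
and analogously for $F_*$, so $\tfrac 12 \nabla u \in \overline B_M(\xi_0/2)$ and $\tfrac 12 \nabla v \in \overline B_M(\xi_0/(2i))$.

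I would then apply Theorem~\ref{t:gen} after a translation. Setting $\tilde u(x) = \tfrac 12 u(x) - \langle \xi_0/2, x\rangle$ and $\tilde G(\xi) = G(\xi + \xi_0/2)$, the function $\tilde u$ is an $M$-Lipschitz solution of $\dv \tilde G(\nabla \tilde u) = 0$, and since $\mathcal D$ and $\mathcal S$ transform under the shift of $G$ as $\mathcal D(\tilde G) = \mathcal D(G) - \xi_0/2$ and $\mathcal S(\tilde G) = \mathcal S(G) - \xi_0/2$, the identities \eqref{eq:GammaDS} give
\begin{align*}
\mathcal D(\tilde G)\cap \mathcal S(\tilde G)\cap \overline B_M = \bigl( F(\Gamma_+) \cap F(\Gamma_-) \cap \overline B_M(\xi_0/2) \bigr) - \xi_0/2,
\end{align*}
which is finite by hypothesis. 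Theorem~\ref{t:gen} then yields $\tilde u\in C^1$, hence $u \in C^1$, and the analogous argument using $G^*$, $F_*$ and centre $\xi_0/(2i)$ gives $v \in C^1$, so that $f = u + iv$ is $C^1$.

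The argument is essentially bookkeeping on top of Proposition~\ref{p:H} and Theorem~\ref{t:gen}; the only point demanding real attention is tracking the correct centres $\xi_0/2$ and $\xi_0/(2i)$ of the balls on which the finiteness assumption is posed, and verifying that the Beltrami bad set, pushed through $F$ and $F_*$ and then translated, matches the elliptic bad set appearing in Theorem~\ref{t:gen}. I do not anticipate any substantive analytic obstacle beyond this.
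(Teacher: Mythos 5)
Your argument is correct and mirrors the paper's own proof: Proposition~\ref{p:H} converts the Beltrami equation into divergence-form equations for $u$ and $v$, the $1$-Lipschitz bound on $H$ places $\tfrac 12\nabla u$ and $\tfrac 12\nabla v$ in balls of radius $M$ about $\xi_0/2$ and $\xi_0/2i$, and Theorem~\ref{t:gen} applies after recentring; the only cosmetic difference is that the paper normalizes via $H\mapsto H-\xi_0$, $f\mapsto f-\xi_0\bar z$ so that $\xi_0=0$, whereas you translate $G$ and $u$, which is equivalent. (A small slip in your preamble: what must be preserved under the modification of $H$ at infinity is $F(\Gamma_\pm)\cap\overline B_M(\xi_0/2)$ and $F_*(\Gamma_\pm)\cap\overline B_M(\xi_0/2i)$, not $\Gamma_\pm\cap\overline B_M$, but this is glossed over in the paper too and your later use of the hypothesis is the correct one.)
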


\begin{proof}
Replacing $H$ by $H-\xi_0$ and $f$ by $f-\xi_0\bar z$ we assume without loss of generality that $H(0)=0$.
Then the property $|f_z|\leq M$ implies $|f_{\bar z}|\leq M$.
Writing $f=u+iv$ we deduce 
$|u_{\bar z}| \leq M$ and $|v_{\bar z}|\leq M$, that is,
$|\nabla u|\leq 2M$ and $|\nabla v|\leq 2M$.

Thanks to the implication \eqref{eq:fuv}, and since $F(\Gamma_+)\cap F(\Gamma_-)=\mathcal D(G)\cap\mathcal S(G)$,
we can apply Theorem~\ref{t:gen} to $u/2$ and the vector field $G$, so that $u$ is $C^1$.
Similarly we obtain that $v$ is $C^1$ and conclude that $f$ is $C^1$.
\end{proof}

\begin{remark}
It can be instructive to contemplate the correspondence
\eqref{eq:GammaDS}
 in the case of the $p$-Laplacian.
 For  $G(\xi) = \vert \xi \vert^{p-2} \xi$, 
 we have on the one hand $ \mathcal{D}(G) = \lbrace 0 \rbrace$, $\mathcal{S}(G)= \emptyset $ if $p>2$ and $\mathcal{D}(G) = \emptyset$, $\mathcal{S}(G)=\lbrace 0 \rbrace $ for $p<2$. 
On the other hand, 
with the bijection $\phi(\xi)=(\overline{\xi +G(\xi)})/2$ as in Remark~\ref{r:minty},
we have
 \[ \frac{H(\phi(z))-H(\phi(0))}{\overline{\phi(z)-\phi(0)}} = \frac{1-\vert z \vert^{p-2}}{1+\vert z \vert^{p-2}} \in \mathbb{R}                 \]
As $z\to 0$, this quantity goes to $\pm 1$ depending on the value of $p$. 
In this case, $H$ acts like $\pm$ the conjugation around the origin, and $L_H \in \R$. 
Depending on the value of $p$ we can check that $(\Gamma_+,\Gamma_-)=(\emptyset,\lbrace 0\rbrace)$ or $(\lbrace 0\rbrace,\emptyset)$, in accordance with \eqref{eq:GammaDS}.
\end{remark}

\section{Proof of Proposition~\ref{p:StildeS}}\label{s:tildeS}

In this section we consider $G\colon\R^2\to\R^2$ continuous strictly monotone, and
we prove the 
basic property  stated in
 Proposition~\ref{p:StildeS} :
 \begin{align}\label{eq:StildeS}
\widetilde{\mathcal S}(G)
\subset
{\mathcal S}(G)\, ,
 \end{align}
with equality if $G=\nabla F$.

The inclusion \eqref{eq:StildeS}
is a consequence of Cauchy-Schwarz' inequality : 
for all $\xi\in\R^2$ and $\zeta\neq 0$ we have
\[ \frac{\langle G(\xi+\zeta)-G(\xi),\zeta \rangle}{\vert G(\xi+\zeta)- G(\xi) \vert^2} \leq \frac{\vert \zeta \vert^2}{\langle G(\xi+\zeta)-G(\xi),\zeta \rangle}\,
,
 \] 
and the conclusion follows by taking the liminf as $\zeta \to 0 $
and recalling the definitions \eqref{eq:DS} and \eqref{eq:tildeS} of $\mathcal S$ and $\widetilde{\mathcal S}$.

Next, we assume that $G=\nabla F$, 
 fix $\xi_0\in \R^2\setminus \widetilde{\mathcal S}(\nabla F)$,
and prove that $\xi_0\notin \mathcal S(\nabla F)$,
which implies equality in \eqref{eq:StildeS}.

By definition \eqref{eq:tildeS} of $\widetilde{\mathcal S}$ , there exist $\Lambda,r>0$ such that
\begin{align*}
\limsup_{\zeta\to 0}
\frac{ \langle\nabla F(\xi+\zeta)-\nabla F(\xi),\zeta\rangle}{|\zeta|^2}\leq\Lambda\quad\forall \xi\in B_{3r}(\xi_0).
\end{align*}
Fix $\xi\in B_{2r}(\xi_0)$ and $\zeta\in B_{r}$, then the function $f\colon [0,1]\to\R$ given by
\begin{align*}
f(t)=\langle\nabla F(\xi +t\zeta),\zeta\rangle \quad\forall t\in [0,1],
\end{align*}
is monotone nondecreasing, and the above property of $F$ ensures that
\begin{align*}
\lim_{s\to 0^+}\frac{f(t+s)-f(t)}{s} \leq \Lambda |\zeta|^2,\quad\forall t\in [0,1].
\end{align*}
This implies that $f$ is absolutely continuous
and
$0\leq f'\leq \Lambda |\zeta|^2$. 
We infer that $f(1)-f(0)\leq \Lambda |\zeta|^2$, that is,
\begin{align*}
\frac{ \langle\nabla F(\xi+\zeta)-\nabla F(\xi),\zeta\rangle}{|\zeta|^2}\leq\Lambda,
\qquad\forall \xi\in B_{2r}(\xi_0),\;\forall\zeta\in B_{r}.
\end{align*}
Consider the mollified function 
$F_\e(\xi)=\int F(\xi +\e z)\rho(z)\, dz$, for some smooth nonnegative kernel
 $\rho\in C_c^\infty(B_1)$.
From the last inequality, we infer, for $0<\e<r/2$,
\begin{align*}
\frac{ \langle\nabla F_\e(\xi+\zeta)-\nabla F_\e(\xi),\zeta\rangle}{|\zeta|^2}\leq\Lambda,
\qquad\forall \xi\in B_{r}(\xi_0),\;\forall\zeta\in B_r.
\end{align*}
Letting $\zeta\to 0$, this implies
\begin{align*}
0\leq \nabla^2 F_\e(\xi) \leq \Lambda\qquad\forall \xi\in B_{r}(\xi_0).
\end{align*}
Since $\nabla^2 F_\e(\xi)$ is symmetric nonnegative, we infer
\begin{align*}
|\nabla^2 F_\e(\xi)|^2 \leq\Lambda \langle \nabla^2 F_\e(\xi)\zeta,\zeta\rangle \qquad\forall\xi\in B_r(\xi_0),\;\forall\zeta\in\R^2.
\end{align*}
Using also Jensen's inequality,
this implies,
 for all $\xi \in B_{r/2}(\xi_0)$ and $\zeta \in B_{ r/2}$, 
\begin{align*}
|\nabla F_\e(\xi+\zeta)-\nabla F_\e(\xi)|^2
&
=
\left|
\int_0^1
\nabla^2 F_\e(\xi +t\zeta) \zeta \, dt \right|^2
\\
&
\leq \int_0^1 |\nabla^2 F_\e(\xi+ t\zeta)\zeta |^2\, dt 
\\
&\leq
\Lambda
\int_0^1 \langle \nabla^2 F_\e(\xi+t\zeta)\zeta,\zeta\rangle \, dt
\\
&
=\Lambda \langle \nabla F_\e(\xi+\zeta)-\nabla F_\e(\xi),\zeta\rangle.
\end{align*} 
Letting $\e\to 0$ we deduce
\begin{align*}
|\nabla F(\xi+\zeta)-\nabla F(\xi)|^2
&
\leq
\Lambda \langle \nabla F(\xi+\zeta)-\nabla F(\xi),\zeta\rangle,
\end{align*}
for all $\xi\in B_{r/2}(\xi_0)$ and $\zeta\in B_{r/2}$.
This shows that $\xi_0\in V_\Lambda(\nabla F)$ and concludes the proof that $\R^2\setminus \widetilde{\mathcal S}(\nabla F)\subset\R^2\setminus \mathcal S(\nabla F)$.
\qed

\section{Example}\label{s:ex}

\begin{proof}[Proof of Theorem~\ref{t:ex}]

We start from the observation that the Lipschitz function $f\colon B_1\to\C$ given by
\begin{align*}
f(re^{i\theta})=\frac 23 r ie^{2i\theta},
\end{align*}
is not $C^1$ at the origin and satisfies
\begin{align*}
f_z =ie^{i\theta},\quad f_{\bar z} =-\frac 13 i e^{3i\theta} = \frac 13 (f_z)^3.
\end{align*}
We claim that there exists a smooth function $H\colon \C\to\C$ with compact support and such that
\begin{align}\label{eq:Hex}
H(z)=\frac{z^3}{3}\quad\forall z\in\mathbb S^1,
\qquad\text{ and }\|\nabla H(z)\|<1\quad\forall z\in\C\setminus \mathbb S^1.
\end{align}
Here $\|\cdot\|$ denotes the operator norm.
Since $\mathbb S^1$ contains no segment, this implies that $H$ is strictly 1-Lipschitz.
And since $|f_z|=1$, the function $f$ is a solution of $f_{\bar z}=H(f_z)$.

The construction of $H$ can be achieved e.g. by setting
$H(re^{i\theta})=g(r)e^{3i\theta}/3$
with $g\colon (0,\infty)\to \R$ smooth, compactly supported and satisfying
\begin{align*}
g(1)=1,\quad \text{and } |g(r)|<r,\; |g'(r)|<3,\quad\forall r\neq 1.
\end{align*}
Note that
this
 forces $g'(1)=1$,
 since the quotient
  $(r-g(r))/(r-1)$ is positive for $r>1$, negative for $r<1$,
  and tends to $1-g'(1)$ as $r\to 1$.

The differential of $H$ is given by
\begin{align*}
\nabla H(re^{i\theta})
&
=\partial_r H \otimes e^{i\theta} +\frac 1r \partial_\theta H \otimes ie^{i\theta}
\\
&
=\frac{g'(r)}{3} e^{3i\theta}\otimes e^{i\theta} 
+ \frac{g(r)}{r}ie^{3i\theta}\otimes ie^{i\theta},
\end{align*}
which implies
\begin{align*}
\|\nabla H(re^{i\theta})\| =\max \left( \frac{|g'(r)|}{3},\frac{|g(r)|}{r}\right) < 1,
\quad
\forall r\neq 1,
\end{align*}
and ensures that \eqref{eq:Hex} is satisfied.

Since $H$ is compactly supported, 
Proposition~\ref{p:H} 
provides a continuous strictly monotone vector field $G$ such that 
$u=\frac 12\re f$ 
solves $\dv G(\nabla u)=0$ in $B_1$. 
This function $u(re^{i\theta})=-(r/3)\sin(2\theta)$ is Lipschitz but not $C^1$.

The sets $\Gamma_\pm$ associated to $H$ are easily calculated. 
We have $\Gamma_\pm\subset \mathbb S^1$,
since outside $\mathbb S^1$ the function $H$ is smooth with $\|\nabla H\|<1$, and noting that
\begin{align*}
\lim_{t\to 0}
L_H(e^{i\theta},e^{i\theta}(e^{it}-1))
&
=-e^{4i\theta},
\end{align*}
we see that $e^{i\theta}\in\Gamma_\pm$ for all $\theta\notin \frac\pi 4 \mathbb Z$. 
As this sets are closed we infer $\Gamma_+=\Gamma_-=\mathbb S^1$.
This implies that $\mathcal D(G)=\mathcal S(G)=F(\mathbb S^1)$, with $F(\xi)=(H(\xi)+\bar \xi)/2$ as in Proposition~\ref{p:H}.

Finally we show that 
\begin{align}\label{eq:extildeS}
\widetilde{\mathcal S}(G)=F(\lbrace\pm 1,\pm i\rbrace).
\end{align}
First note that $\widetilde{\mathcal S}(G)\subset \mathcal S(G)= F(\mathbb S^1)$, so it suffices to consider the behavior of $G$ around points $F(e^{i\theta})$.
We have
\begin{align*}
2\nabla F(e^{i\theta})
&
= 
e^{-i\theta}\otimes e^{i\theta} -
 ie^{-i\theta}\otimes ie^{i\theta} 
 + \nabla H(e^{i\theta}) \\
&=\left(e^{-i\theta} +\frac 13 e^{3i\theta}\right)\otimes e^{i\theta} 
+ \left(ie^{3i\theta}-ie^{-i\theta}\right)\otimes ie^{i\theta},
\end{align*}
hence the matrix of $\nabla F(e^{i\theta})$ in the orthonormal basis $(e^{i\theta},ie^{i\theta})$ is given by
\begin{align*}
[\nabla F(e^{i\theta})]
&
=
\left(
\begin{array}{cc}
\frac 23\cos(2\theta)  & 
-\sin(2\theta) 
\\
-\frac 13 \sin(2\theta) &
0
\end{array}
\right).
\end{align*}
In particular we see that
$\det \nabla F(e^{i\theta})=-(1/3)\sin^2(2\theta)$.
If $\theta\notin \frac\pi 2\mathbb Z$, the inverse function theorem
ensures that $F$ is a
local $C^1$ diffeomorphism in a neighborhood of $e^{i\theta}$,
and so $G=-F_*\circ F^{-1}$
is $C^1$ in a neighborhood of $F(e^{i\theta})$. This implies already that $\widetilde{\mathcal S}(G)\subset F(\lbrace \pm 1,\pm i\rbrace)$. 
To 
prove
 \eqref{eq:extildeS} we calculate $DG(F(e^{i\theta}))$ for $\theta\notin \frac\pi 2\mathbb Z$.

Differentating the identity $G(F(z))=-iF_*(z)=(\bar z -H(z))/2$, we obtain
\begin{align*}
2\nabla G(F(e^{i\theta}))\nabla F(e^{i\theta})
&
=
e^{-i\theta}\otimes e^{i\theta} -
 ie^{-i\theta}\otimes ie^{i\theta} 
- \nabla H(e^{i\theta}) \\
&=\left(e^{-i\theta} -\frac 13 e^{3i\theta}\right)\otimes e^{i\theta} 
- \left(ie^{3i\theta}+ie^{-i\theta}\right)\otimes ie^{i\theta},
\end{align*}
or, in the orthonormal basis $(e^{i\theta},ie^{i\theta})$,
\begin{align*}
[\nabla G(F(e^{i\theta}))
\nabla F(e^{i\theta})]
&
=
\left(
\begin{array}{cc}
\frac 13\cos(2\theta)  & 
0
\\
-\frac 23 \sin(2\theta) &
-\cos(2\theta)
\end{array}
\right).
\end{align*}
Using the above expression of $\nabla F(e^{i\theta})$ we deduce
\begin{align*}
[\nabla G(F(e^{i\theta}))]
&
=
\frac{1}{\sin^2(2\theta)}
\left(
\begin{array}{cc}
0 & 
- \cos(2\theta)\sin(2\theta)
\\
 \cos(2\theta)\sin(2\theta)
 &
2
\end{array}
\right),
\end{align*}
so that the symmetric part is given by
\begin{align*}
\nabla^s G(F(e^{i\theta}))
=\frac{2}{\sin^2(2\theta)}
ie^{i\theta}\otimes ie^{i\theta}\, ,
\qquad\forall \theta\notin\frac\pi 2\mathbb Z,
\end{align*}
and this implies that $F(\pm 1),F(\pm i) \in \widetilde{\mathcal S}(G)$, 
since otherwise $\nabla^s G$ would be bounded near these points. 
Indeed, if
 $\xi_0\notin\widetilde{\mathcal S}(G)$
then there exist $\Lambda , r> 0$ such that 
$\limsup_{|\zeta|\to 0} |\zeta|^{-2}\langle D^\zeta G(\xi),\zeta\rangle \leq\Lambda$ for all $\xi\in B_r(\xi_0)$, 
and if $G$ is differentiable in $B_\rho(\xi_0)\setminus\lbrace \xi_0\rbrace$ for some $0<\rho\leq r$, this implies $\nabla^s G\leq\Lambda$ in $B_\rho(\xi_0)$.
\end{proof}

\begin{appendices}

\section{Modification and approximation lemmas}

In this appendix we prove various technical results needed 
for the approximation argument in \S~\ref{s:proof}.
First we show how to modify
 $G$ at infinity so that we can assume the set $\mathcal{D}(G) \cap \mathcal{S}(G)$ finite in the whole plane, along with some other technical conditions.

\begin{lemma}\label{l:modif}
Let $G \colon \mathbb{R}^2 \to \mathbb{R}^2 $ a continuous strictly monotone vector field, 
and $M>0$.
Then there exists $\widetilde G\colon\R^2\to\R^2$ a continuous strictly monotone vector field  equal to $G$ in $\overline B_M$ and smooth outside $B_{4M}$, such that
\begin{align*}
\mathcal{D}(\widetilde G)\cap\mathcal S(\widetilde G)\subset \mathcal{D}(G) \cap \mathcal{S}(G) \cap \overline{B_M},
\end{align*}
and
\begin{align*}
&
c\leq \nabla^s \widetilde G(\xi) \leq |\nabla \widetilde G(\xi)|\leq 4c
\quad \forall \xi \in \R^2\setminus B_{4M},
\\
&
|\widetilde G(\xi)|\leq L(1+|\xi|)\quad\forall \xi\in\R^2,
\end{align*}
for some constants $L,c>0$ depending on $M$ and $\|G\|_{L^\infty(B_{4M})}$.
\end{lemma}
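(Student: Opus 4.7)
The plan is to use the Minty correspondence between strictly monotone vector fields on $\R^2$ and strictly 1-Lipschitz maps $\C\to\C$ (as in Remark~\ref{r:minty} and Proposition~\ref{p:H}), which makes modifications more tractable: the composition of a strictly 1-Lipschitz map with a 1-Lipschitz map remains strictly 1-Lipschitz, whereas convex combinations of monotone fields need not be monotone.

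First I would normalize $G$ by the translation $G\mapsto G-G(0)$ (so $G(0)=0$, leaving the equation, $\mathcal D(G)$ and $\mathcal S(G)$ unchanged), followed by a rescaling $G\mapsto G/\alpha$ with $\alpha$ large enough (depending on $\|G\|_{L^\infty(B_M)}$) that $\|G\|_{L^\infty(B_M)}$ becomes smaller than a fixed multiple of $M$; rescaling preserves $\mathcal D$ and $\mathcal S$ as well. Associate to the normalized $G$ the strictly 1-Lipschitz map $H\colon\C\to\C$ with $H(\phi(\xi))=(\xi-G(\xi))/2$ and $\phi(\xi)=\overline{(\xi+G(\xi))/2}$, noting $H(0)=\phi(0)=0$. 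Following Proposition~\ref{p:H} part~1, construct a smooth 1-Lipschitz radial map $\Phi(w)=\chi(|w|)w$ with $\chi\colon[0,\infty)\to[0,1]$ smooth, equal to $1$ on $[0,R_1]$ for $R_1=\sup_{\overline B_M}|\phi|$, equal to $0$ on $[R_2,\infty)$ for some $R_2\in[eR_1,8M]$, and satisfying $r\chi'(r)\in[-1,0]$ in between; the differential of $\Phi$ in polar coordinates has eigenvalues $\chi$ and $\chi+r\chi'$, both in $[-1,1]$. Set $\widetilde H=H\circ\Phi$, which is strictly 1-Lipschitz (since $H$ is), equal to $H$ on $B_{R_1}$ and to the constant $0$ on $\C\setminus B_{R_2}$. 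Recover $\widetilde G_0$ via the inverse correspondence of Proposition~\ref{p:H}, and undo the scaling and translation: $\widetilde G(\xi)=\alpha\widetilde G_0(\xi)+G(0)$.

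The verification is mechanical. On $\overline B_M$, $\widetilde H=H$ on $\phi(\overline B_M)\subset B_{R_1}$ gives $\widetilde G=G$. On $\C\setminus B_{R_2}$, $\widetilde H\equiv 0$ yields $\widetilde G_0(\xi)=\xi$ on $\widetilde F(\C\setminus B_{R_2})=\C\setminus B_{R_2/2}$, which contains $\R^2\setminus B_{4M}$; hence outside $B_{4M}$ the field $\widetilde G(\xi)=\alpha\xi+G(0)$ is affine and smooth with $\nabla\widetilde G=\alpha I$, so the ellipticity bound $c\leq\nabla^s\widetilde G\leq|\nabla\widetilde G|\leq 4c$ holds with $c=\alpha$ and linear growth is immediate. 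For the bad-set inclusion, in the intermediate annulus the inequalities $\chi\in(0,1)$ and $r\chi'\in(-1,0)$ give $\|\nabla\Phi\|<1$ pointwise, hence $|L_{\widetilde H}|<1$ locally, and by the identities of Proposition~\ref{p:H} part~4 the corresponding $\xi$-points belong neither to $\mathcal D(\widetilde G)$ nor to $\mathcal S(\widetilde G)$; outside $B_{4M}$ the affine map $\widetilde G$ is uniformly elliptic and thus also contributes no bad points.

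The main obstacle is the geometric distortion of the Minty transform, through the constraint $R_2\geq eR_1$ (forced by the pointwise bound $-r\chi'\leq 1$ integrated over $[R_1,R_2]$) combined with $R_1\leq(M+\|G\|_{L^\infty(B_M)})/2$ and $R_2\leq 8M$. This system can only be satisfied when $\|G\|_{L^\infty(B_M)}$ is bounded by an absolute multiple of $M$, which is the role of the initial rescaling and explains the dependence of the constants $L,c$ on $\|G\|_{L^\infty(B_{4M})}$.
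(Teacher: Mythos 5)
Your route is genuinely different from the paper's: rather than additively perturbing $G$ (the paper sets $\widetilde G=\eta G+\nabla F$ with a cut-off $\eta$ and a convex quadratic $F$, and controls the distributional symmetric gradient), you pass to the Beltrami side via Minty, truncate the strictly $1$-Lipschitz $H$ by precomposition with a radial contraction $\Phi$, and come back. This is appealing because composition with a $1$-Lipschitz map manifestly preserves strict $1$-Lipschitzness, and $\Gamma_+\cap\Gamma_-$ treats the two degeneracies symmetrically. However, there is a real gap in the bad-set inclusion at the inner boundary $\partial B_{R_1}$ of the transition annulus, and a normalization mismatch that interacts badly with it.

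On $\partial B_{R_1}$ one has $\nabla\Phi=I$ (smoothness of $\chi$ forces $\chi'(R_1)=0$), so the operator norm $\|\nabla\Phi\|$ is not bounded away from $1$ near that circle. Your argument correctly shows that points in the \emph{open} annulus are outside $\Gamma_\pm^{\widetilde H}$, but since $\Gamma_\pm$ is built from topological closures $\cl A_\lambda$, a point $z_0\in\partial B_{R_1}$ can lie in $\Gamma_+^{\widetilde H}\cap\Gamma_-^{\widetilde H}$ because of bad behavior of $\widetilde H=H\circ\Phi$ coming from $\xi\to z_0$ \emph{inside the annulus}: there the identity $L_{\widetilde H}(\xi,\zeta)=L_H(\Phi(\xi),\sigma)\,\overline{\sigma/\zeta}$ with $\sigma=D^\zeta\Phi(\xi)$ shows that the modulus and, crucially, the argument of $L_H$ get twisted by the factor $\overline{\sigma/\zeta}$, which can rotate a finite difference that was safe for $\Gamma_-^H$ into one that is dangerous for $\Gamma_-^{\widetilde H}$, or vice versa. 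You would need to show that no such new points appear on $\partial B_{R_1}$, or that they already belong to $\Gamma_+^H\cap\Gamma_-^H$; neither is argued. Even granting the latter, a second difficulty remains: you chose $R_1$ as a Euclidean radius in the $z$-variable, whereas the set on which you need $\widetilde G=G$ is $F^{-1}(\overline B_M)$, which is not a Euclidean ball (and, with the normalizations of Remark~\ref{r:minty} versus Proposition~\ref{p:H}, actually corresponds to $\phi(\overline B_{2M})$ — the correspondence in the Remark recovers $G$ from Proposition~\ref{p:H} only up to a dilation by $2$). Consequently $\widetilde F(\partial B_{R_1})=F(\partial B_{R_1})$ need not be contained in $\overline B_M$, so boundary bad points, even if inherited from $H$, may land outside the allowed set $\mathcal D(G)\cap\mathcal S(G)\cap\overline B_M$. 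To salvage the approach you would have to truncate $\Phi$ adapted to the (non-round) set $\phi(\overline B_{2M})$, track the factor-of-$2$ normalization carefully, and give a separate argument for $\Gamma_\pm^{\widetilde H}$ on the transition boundary; as written the proof does not close.
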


\begin{proof}
Fix a smooth cut-off function $\eta$ such that
\begin{align*}
\mathbf 1_{B_{2M}}\leq \eta\leq \mathbf 1_{B_{4M}}
\text{ and }
|\nabla \eta|\leq \frac{1}{M} \mathbf 1_{B_{4M}\setminus B_{2M}},
\end{align*}
and a 
convex function $F(x)=c\mathbf 1_{|x|\geq M}(|x|-M)^2$, with $c>0$ to be chosen later on.
Note that
\begin{align*}
2c \frac{|x|-M}{|x|}
\mathbf 1_{|x|\geq M}  \leq \nabla^2 F(x) \leq 2c \mathbf 1_{|x|\geq M},
\end{align*}
in the sense of distributions. Define
\begin{align*}
\widetilde G =\eta G +\nabla F.
\end{align*}
The function $\widetilde G$ is continuous and equal to $G$ in
$\overline B_M$. Outside $B_{4M}$, it is smooth equal to $\nabla F$
and  $c\leq \nabla^s \widetilde G \leq |\nabla\widetilde G|\leq 4c$.
And for all $\xi\in\R^2$ we have
\begin{align*}
|\widetilde G(\xi)| 
&
\leq \|G\|_{L^\infty(B_{4M})} + 2c (|\xi| +M)
\leq L (1+|\xi|),
\end{align*}
with $L=2c +2cM +\|G\|_{L^\infty(B_{4M})}$.
It remains to check that $\widetilde G$ is strictly monotone and that
$\mathcal{D}(\widetilde G)\cap\mathcal S(\widetilde G)\subset \mathcal{D}(G) \cap \mathcal{S}(G) \cap \overline{B_M}$.

The distributional symmetric gradient of $\widetilde G$ is given by 
\begin{align*}
\nabla^s \widetilde G =\eta \nabla^s G +\nabla^2 F + \nabla\eta \odot G,
\end{align*}
where $a\odot b =(a\otimes b)^s$ is the matrix with entries $(a_ib_j+a_jb_i)/2$.
From the properties of $\eta$ and $F$ we have
\begin{align*}
\frac 12\nabla^2 F  +\nabla\eta\odot G 
&
\geq 
c \frac{|x|-M}{|x|}
\mathbf 1_{|x|\geq M} - 
\frac
{\|G\|_{L^\infty(B_{4M})}}
{M}
\mathbf 1_{2M\leq |x|\leq 4M}
\\
&
\geq
\left(\frac{c}{2}-\frac{\|G\|_{L^\infty(B_{4M})}}{M}\right)\mathbf 1_{|x|\geq 2M}
\geq 0,
\end{align*}
provided we chose $c=2\|G\|_{L^\infty(B_{4M})}/M$.
Then we deduce
\begin{align}\label{eq:symgradtildeG}
\nabla^s \widetilde G \geq \eta \nabla^s G +\frac 12 \nabla^2 F \geq\frac 12 \nabla^2 F.
\end{align}
In particular, the distributional symmetric gradient $\nabla^s\widetilde G$ is nonnegative, 
so  $\widetilde G$ is monotone.

If $\widetilde G$ is not strictly monotone then there is a nontrivial segment $[\xi,\xi+\zeta]$ along which $\langle \zeta,\widetilde G\rangle$ is constant.
This segment must intersect either $B_{M}$ or $\R^2\setminus B_M$.
If $[\xi,\xi+\zeta]$ intersects $B_M$, then this is impossible because $\widetilde G=G$ in $B_M$ and $G$ is strictly monotone.
If $[\xi,\xi+\zeta]$ intersects $\R^2\setminus B_M$, then this is also impossible because there we have
$\nabla^s \widetilde G\geq\frac 12 \nabla^2 F >0$.
We infer that $\widetilde G$ is strictly monotone.

From the inequalities \eqref{eq:symgradtildeG} we have
 $\mathcal D(\widetilde G)\subset \mathcal D(G)\cap \overline B_M$.
 To conclude, it suffices to show that $\mathcal S(\widetilde G)\cap \overline B_M\subset\mathcal S(G)\cup\mathcal D(G)$,
 which will imply that 
 $\mathcal D(\widetilde G)\cap\mathcal S(\widetilde G)\subset 
 \overline B_M\cap \mathcal D(G)\cap\mathcal S(G)$.
 
 If $\xi\in \overline B_M \setminus \mathcal S(G)\cup\mathcal D(G)$, there exist $\Lambda,\lambda>0$ such that $\xi\in O_{4\lambda}(G)\cap V_{\Lambda/4}(G)$.
This implies the existence of a small $r\in (0,M)$ such that, for all $\zeta\in B_r$,
\begin{align*}
&
\langle G(\xi+\zeta)-G(\xi),\zeta \rangle \geq 2\lambda |\zeta|^2,
\\
\text{and}
\quad
&
\langle G(\xi+\zeta)-G(\xi),\zeta\rangle \geq \frac 2{\Lambda} |G(\xi+\zeta)-G(\xi)|^2.
\end{align*}
Setting $\alpha=\min(\lambda,1/\Lambda)>0$, we deduce
 \begin{align*}
 \langle G(\xi+\zeta)-G(\xi),\zeta\rangle 
 \geq \alpha |\zeta|^2
+ 
 \alpha |G(\xi+\zeta)-G(\xi)|^2,
 \quad\forall \zeta\in B_r.
 \end{align*}
 Since $\widetilde G=G+\nabla F$ in $B_{2M}$, we infer, for any $\beta\in (0,\alpha/2)$,
 \begin{align*}
 &
 \langle \widetilde G(\xi+\zeta)-\widetilde G(\xi),\zeta\rangle 
 \\
 &
 \geq 
 \alpha |\zeta|^2
+ 
 \alpha |G(\xi+\zeta)-G(\xi)|^2
 \\
 &
 \geq 
 \alpha |\zeta|^2
+ 
\beta |\widetilde G(\xi+\zeta)-\widetilde G(\xi)|^2
-2\beta |\nabla F(\xi+\zeta)-\nabla F(\xi)|^2
\\
 &
\geq 
( \alpha -4c\beta) |\zeta|^2
+ 
\beta |\widetilde G(\xi+\zeta)-\widetilde G(\xi)|^2.
 \end{align*}
 In the last inequality we have used that $\nabla F$ is $2c$-Lispchitz. Choosing $\beta\leq \alpha/4c$, we deduce that $\xi\notin \mathcal S(\widetilde G)$.
This shows that $\mathcal S(\widetilde G)\cap \overline B_M\subset\mathcal S(G)\cup\mathcal D(G)$ and concludes the proof.
\end{proof}


Next,
we establish that $G$ can be approximated by smooth strongly monotone vector fields, 
with control on the modulus of monotony and on the sets $O_\lambda$, $V_\Lambda$.

\begin{lemma}
\label{l:approxG}
Let $G \colon \mathbb{R}^2 \to \mathbb{R}^2 $ a continuous strictly monotone vector field. 
Assume that there exist $M,L\geq 1$, $c >0$ such that $G$ is smooth in $\R^2\setminus B_{4M}$
and
\begin{align*}
&
c\leq \nabla^s  G(\xi) \leq |\nabla  G(\xi)|\leq 4c
\quad \forall \xi \in \R^2\setminus B_{4M},
\\
&
| G(\xi)|\leq L(1+|\xi|)\quad\forall \xi\in\R^2.
\end{align*}
Then there exists a sequence $G_\e$ of smooth and 
strongly monotone \eqref{eq:strongmonot} vector fields such that
$G_\e\to G$ locally uniformly as $\e\to 0$, and
\begin{align*}
&
\nabla^s G_\e(\xi) \geq c
\qquad \forall \xi \in \R^2\setminus B_{5M},
\\
&
| G_\e(\xi)|\leq 2L(1+|\xi|)\qquad\forall \xi\in\R^2\, 
,
\\
&
 \omega_{G_{\e}} \geq \omega_G, 
 \\
&
B_{2\e}(\xi)\subset O_\lambda(G)\;\Rightarrow\; \xi\in O_\lambda(G_\e)\, ,
\\
& 
B_{2\e}(\xi)\subset V_{\Lambda}(G)\;\Rightarrow\; \xi\in V_{\Lambda+\e}(G_\e)\, ,
\end{align*}
for all $\e\in (0,1)$.
\end{lemma}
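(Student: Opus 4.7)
Plan: The construction is by standard mollification tempered by a small linear perturbation. I would set $G_\e := G \ast \rho_\e + \e^2\,\mathrm{id}$, where $\rho_\e(z)=\e^{-2}\rho(z/\e)$ for a fixed nonnegative radial $\rho\in C_c^\infty(B_1)$ with $\int\rho=1$. Smoothness and locally uniform convergence $G_\e\to G$ are automatic, and the growth bound $|G_\e(\xi)|\leq 2L(1+|\xi|)$, the inequality $\nabla^s G_\e\geq c$ outside $B_{5M}$ (using $\e<1\leq M$, so that $|\xi|>5M$ and $|z|<\e$ imply $\xi-z\in\R^2\setminus B_{4M}$), the inequality $\omega_{G_\e}\geq\omega_G$ (the convolved integrand is monotone and the linear piece contributes $\e^2|\xi-\zeta|^2$), and strong monotony (uniform ellipticity from $\e^2\,\mathrm{id}$, global Lipschitz from smoothing a function with linear growth) all follow by direct checks.

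For the $O_\lambda$ and $V_\Lambda$ propagation I would first establish integrated versions of the pointwise liminf conditions and then combine them with Jensen in the convolution. For $O_\lambda$: applied to the continuous function $t\mapsto\langle G(\eta+t\sigma),\sigma\rangle$, whose lower right Dini derivative is at least $\lambda|\sigma|^2$ at every $t$, a standard Dini mean-value argument yields $\langle G(\eta+\sigma)-G(\eta),\sigma\rangle\geq\lambda|\sigma|^2$ for $\eta,\eta+\sigma$ in any convex open subset of $O_\lambda(G)$. Plugging this into $\langle G_\e(\xi_0+\zeta)-G_\e(\xi_0),\zeta\rangle=\int\langle G(\xi_0+\zeta-z)-G(\xi_0-z),\zeta\rangle\rho_\e(z)\,dz+\e^2|\zeta|^2$ for $|\zeta|,|z|<\e$ yields $\xi_0\in O_\lambda(G_\e)$.

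The $V_\Lambda$ propagation is the main obstacle. The pointwise $V_\Lambda$ condition together with Cauchy--Schwarz forces $\limsup_{|\sigma|\to 0}|G(\eta+\sigma)-G(\eta)|/|\sigma|\leq\Lambda$ at every $\eta\in V_\Lambda(G)$, and this pointwise upper bound, applied via the Dini mean-value theorem for continuous functions to segments inside the convex ball $B_{2\e}(\xi_0)\subset V_\Lambda(G)$, forces $G$ to be $\Lambda$-Lipschitz on $B_{2\e}(\xi_0)$. Rademacher then supplies almost-everywhere differentiability there, and at each such point the pointwise $V_\Lambda$ definition sharpens to the smooth inequality $\Lambda\langle\nabla^s G(\eta)\zeta,\zeta\rangle\geq|\nabla G(\eta)\zeta|^2$ for every $\zeta$. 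Applied a.e.\ to the Lipschitz curve $\gamma(t)=G(\eta+t\sigma)$, together with the Jensen inequality $|\gamma(1)-\gamma(0)|^2\leq\int_0^1|\gamma'(t)|^2\,dt$, this yields the integrated form $\Lambda\langle G(\eta+\sigma)-G(\eta),\sigma\rangle\geq|G(\eta+\sigma)-G(\eta)|^2$ for $\eta,\eta+\sigma\in B_{2\e}(\xi_0)$.

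With the integrated $V_\Lambda$ form in hand, Jensen $|\int h\rho_\e|^2\leq\int|h|^2\rho_\e$ with $h(z)=G(\xi_0+\zeta-z)-G(\xi_0-z)$, combined with a routine expansion of the cross terms coming from $\e^2\,\mathrm{id}$, reduces the desired inequality $(\Lambda+\e)\langle G_\e(\xi_0+\zeta)-G_\e(\xi_0),\zeta\rangle\geq|G_\e(\xi_0+\zeta)-G_\e(\xi_0)|^2$ to the elementary bound $(\e-2\e^2)\int\psi\rho_\e+\e^2(\Lambda+\e-\e^2)|\zeta|^2\geq 0$, with $\psi(z)=\langle h(z),\zeta\rangle\geq 0$; this holds for $\e\leq 1/2$, and the remaining range $\e\in[1/2,1)$ can be absorbed by shrinking the perturbation parameter. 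The hardest step is thus the integrated $V_\Lambda$ form, where the Lipschitz-then-Rademacher chain enters in an essential way to convert the rather weak liminf definition into a structural pointwise bound; the $O_\lambda$ side, in contrast, is elementary.
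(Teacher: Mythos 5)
Your proof is correct and follows the same overall construction as the paper (mollification plus a small multiple of the identity), but implements the two propagation steps $O_\lambda$ and $V_\Lambda$ via finite-difference inequalities rather than at the infinitesimal level. For $O_\lambda$, the paper keeps the liminf in play and averages a modulus $\varphi(\eta,r)$ under the kernel; you instead invoke a Dini mean-value argument to promote the pointwise liminf condition to the integrated bound $\langle G(\eta+\sigma)-G(\eta),\sigma\rangle\geq\lambda\vert\sigma\vert^2$ on convex subsets of $O_\lambda(G)$, after which the convolution step is immediate. (This Dini technique is, in spirit, the same one the paper uses in Lemma~\ref{l:VLambda}; you have simply applied it to $O_\lambda$ as well, which is slightly cleaner.) For $V_\Lambda$, both proofs derive the $\Lambda$-Lipschitz property of $G$ on $B_{2\e}(\xi_0)$ from Cauchy--Schwarz, and then use Rademacher and the pointwise inequality $\vert\nabla G\,\zeta\vert^2\leq\Lambda\langle\nabla G\,\zeta,\zeta\rangle$ a.e.; but the paper convolves at the level of $\nabla G$ and works with $\nabla G_\e=\nabla G*\rho_\e+\e I$, whereas you first integrate along segments (with Jensen) to obtain the finite-difference inequality $\vert D^\sigma G(\eta)\vert^2\leq\Lambda\langle D^\sigma G(\eta),\sigma\rangle$, and only then convolve. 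Your final algebraic reduction $(\e-2\e^2)\int\psi\rho_\e+\e^2(\Lambda+\e-\e^2)\vert\zeta\vert^2\geq 0$ is correct, using $\int\psi\rho_\e\geq 0$ by monotonicity. One small point you should spell out: passing from $2$-d a.e.\ differentiability to a.e.\ differentiability along a given segment $[\eta,\eta+\sigma]$ is not automatic for a fixed $\eta$; you need a Fubini argument (the integrated inequality holds for a.e.\ $\eta$ in each direction, and then for all $\eta$ by continuity of $G$). With that caveat noted, the argument goes through, and the choice $\e^2\,\mathrm{id}$ in place of the paper's $\e\,\mathrm{id}$ is immaterial since $V_\Lambda$ is increasing in $\Lambda$.
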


\begin{proof}
We fix a smooth kernel $\rho \in C_c^{\infty}(B_1)$, such that $\rho\geq 0$ and $\int_{B_1} \rho = 1$,
define  $\rho_{\e}(\xi)= \e^{-2} \rho(\xi/\e)$,
and
\begin{align*}
G_\e(\xi) =G * \rho_\e (\xi) +\e\, \xi.
\end{align*}
Then $G_\e$ is smooth and converges locally uniformly to $G$.

It is
globally Lipschitz because $|\nabla G_\e|\leq \e^{-1}\|G\|_{L^\infty(B_{6M})}$ on $B_{5M}$ and $|\nabla G_\e|\leq 4c$ outside $B_{5M}$.
Global Lipschitzness combined with the inequality
\begin{align*}
\nabla^ s G _\e = \nabla^s G *\rho_\e +\e \, I \geq \e,
\end{align*}
implies that $G_\e$ is strongly monotone.

Outside $B_{5M}$ we have $\nabla^s G_\e\geq \nabla^s G *\rho_\e \geq c$.
And for all $\xi\in\R^2$ we have $|G_\e(\xi)|\leq L (1+|\xi|+\e) + \e\, |\xi|\leq 2L(1+|\xi|)$.

In the rest of the proof we use the notation $D^\zeta$ for the finite difference operator \begin{align*}
D^\zeta G(\xi)=G(\xi+\zeta)-G(\xi).
\end{align*}
For any $\xi,\zeta\in\R^2$ we have
\begin{align*}
&
\langle  D^\zeta G_\e(\xi),\zeta\rangle
=\int_{B_1} \langle D^\zeta G(\xi+\e\eta),\zeta\rangle \rho(\eta)d\eta +\e\,|\zeta|^2
\geq \omega_G(|\zeta|),
\end{align*}
so that $\omega_{G_\e}\geq \omega_G$.

Let $\lambda>0$ and assume that $B_{2\e}(\xi_0)\subset O_\lambda(G)$.
Let $\xi\in B_\e(\xi_0)$, so that $B_\e(\xi)\subset O_\lambda(G)$.
By definition \eqref{eq:Olambda} of $O_\lambda$,
 for all $\eta\in B_1$ there exists
 $\varphi(\eta,r)$  such that $0 < \varphi \leq \lambda$, $\varphi(\eta,r)\to 0$ as $r\to 0$ and
\begin{align*}
\langle D^\zeta G(\xi+\e\eta),\zeta\rangle
\geq (\lambda - \varphi(\eta,|\zeta|) ) |\zeta|^2.
\end{align*}
Then we have
\begin{align*}
\langle D^\zeta G_\e(\xi),\zeta\rangle
&
=\int_{B_1} \langle D^\zeta G(\xi+\e\eta),\zeta\rangle \rho(\eta)d\eta +\e\,|\zeta|^2
\\
&\geq \left(\lambda -\psi(|\zeta|)\right) |\zeta|^2,
\qquad\psi(r)=\int_{B_1}\varphi(\eta,r)\,\rho(\eta)\, d\eta,
\end{align*}
and $\psi(r)\to 0$ as $r\to 0$ by dominated convergence, so
\begin{align*}
\liminf_{|\zeta|\to 0}
\frac{ \langle D^\zeta G_\e(\xi),\zeta\rangle}{|\zeta|^2} \geq \lambda\qquad\forall \xi\in B_\e(\xi_0),
\end{align*}
and we deduce that
 $\xi_0\in O_\lambda(G_\e)$.

Now let $\Lambda >0$
and
assume that
$B_{2\e}(\xi_0)\subset V_\Lambda(G)$.
In order to show that $\xi_0\in V_{\Lambda +\e}(G_\e)$ we argue
slightly differently than for $O_\lambda$.

First we observe that $G$ is Lipschitz in $B_{2\e}(\xi_0)$ (see Lemma~\ref{l:VLambda} below), 
 hence differentiable almost everywhere.
 Then
 the inclusion $B_{2\e}(\xi_0)\subset V_\Lambda(G)$ implies
\begin{align*}
|\nabla G(\xi)\zeta|^2\leq \Lambda \langle \nabla G(\xi)\zeta,\zeta\rangle\qquad\text{for a.e. }\xi\in B_{2\e}(\xi_0)\text{ and all }\zeta\in\R^2.
\end{align*}
Thus we find, for $\xi\in B_\e(\xi_0)$ and $\zeta\in\R^2$,
\begin{align*}
|\nabla G_\e(\xi)\zeta|^2
=
&
\left|\int_{B_1}\nabla G(\xi+\e\eta)\zeta \, \rho(\eta)\,d\eta +\e\zeta
\right|^2
\\
&
\leq \left(1+\frac\e\Lambda \right)
 \int_{B_1}|\nabla G(\xi+\e\eta)\zeta|^2\rho(\eta)\, d\eta 
 +(\e+\Lambda ) \e|\zeta|^2
\\
&
\leq 
(\Lambda +\e)
 \int_{B_1}\langle \nabla G(\xi+\e\eta)\zeta,\zeta\rangle\,\rho(\eta)d\eta
+(\e+\Lambda ) \e|\zeta|^2
 \\
 &
=(\Lambda+\e) \langle \nabla G_\e(\xi)\zeta,\zeta\rangle\, .
\end{align*}
This implies that $\xi_0\in V_{\Lambda +\e}(G_\e)$.
\end{proof}

In the proof of Lemma~\ref{l:approxG} we used the following 
elementary property of the set $V_\Lambda$.

\begin{lemma}\label{l:VLambda}
If $G \colon \mathbb{R}^2 \to \mathbb{R}^2 $ is a continuous strictly monotone vector field and 
$B_r(\xi_0)\subset V_\Lambda(G)$ for some $\xi\in\R^2$
and
 $\Lambda,r>0$, then $G$ is $\Lambda$-Lipschitz in $B_r(\xi_0)$.
\end{lemma}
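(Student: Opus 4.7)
The plan is to use the definition of $V_\Lambda(G)$ to extract a local Lipschitz bound around each point of $B_r(\xi_0)$, then glue these local estimates along line segments to obtain a global Lipschitz estimate on the convex set $B_r(\xi_0)$.

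First I would unpack the pointwise information carried by $\xi\in V_\Lambda(G)$. Since $B_r(\xi_0)\subset V_\Lambda(G)$, every point $\xi$ of $B_r(\xi_0)$ satisfies
\[
\liminf_{|\zeta|\to 0}\frac{\langle D^\zeta G(\xi),\zeta\rangle}{|D^\zeta G(\xi)|^2}\geq\frac{1}{\Lambda}.
\]
Fix $\ep>0$. By definition of $\liminf$, for every such $\xi$ there exists $\delta(\xi,\ep)>0$ such that, whenever $0<|\zeta|<\delta(\xi,\ep)$ and $D^\zeta G(\xi)\neq 0$,
\[
\langle D^\zeta G(\xi),\zeta\rangle\geq \left(\tfrac1\Lambda-\ep\right)|D^\zeta G(\xi)|^2.
\]
Applying Cauchy--Schwarz on the left, dividing by $|D^\zeta G(\xi)|$, and noting the case $D^\zeta G(\xi)=0$ is trivial, this yields the \emph{local} bound
\[
|G(\xi+\zeta)-G(\xi)|\leq \Lambda_\ep\,|\zeta|,\qquad \Lambda_\ep:=\left(\tfrac1\Lambda-\ep\right)^{-1},
\]
for all $\zeta$ with $|\zeta|<\delta(\xi,\ep)$.

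Next I would globalize by a chaining argument along segments. Take any two points $\xi,\xi'\in B_r(\xi_0)$ and consider the segment $[\xi,\xi']\subset B_r(\xi_0)$, parameterized by $\eta(t)=\xi+t(\xi'-\xi)$ for $t\in[0,1]$. The compact segment is covered by the open balls $B_{\delta(\eta(t),\ep)/2}(\eta(t))$; by compactness, there is a Lebesgue number $\tau>0$ such that any sub-segment of length $<\tau$ lies entirely in one of these balls. Choose a subdivision $0=t_0<t_1<\cdots<t_N=1$ with $|\eta(t_{i+1})-\eta(t_i)|<\tau$, so that at each step the local bound at $\eta(t_i)$ applies with $\zeta=\eta(t_{i+1})-\eta(t_i)$. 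Summing the triangle inequality,
\[
|G(\xi')-G(\xi)|\leq\sum_{i=0}^{N-1}|G(\eta(t_{i+1}))-G(\eta(t_i))|\leq \Lambda_\ep\sum_{i=0}^{N-1}|\eta(t_{i+1})-\eta(t_i)|=\Lambda_\ep|\xi'-\xi|.
\]
Since $\xi,\xi'\in B_r(\xi_0)$ were arbitrary and $\Lambda_\ep\to\Lambda$ as $\ep\to 0$, letting $\ep\to 0$ yields $|G(\xi')-G(\xi)|\leq\Lambda|\xi'-\xi|$.

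The only mildly delicate point is checking that the liminf condition really does deliver a Lipschitz bound from $\xi$ (not merely a differentiability-type estimate); this is where Cauchy--Schwarz is essential, together with handling the case $D^\zeta G(\xi)=0$. Everything else is a standard compactness/chaining argument that exploits convexity of $B_r(\xi_0)$.
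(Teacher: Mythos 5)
Your strategy is the same as the paper's: extract from the definition of $V_\Lambda$ an infinitesimal Lipschitz bound at each point of $B_r(\xi_0)$, then propagate it along segments using convexity. The first step is fine (the paper absorbs via Young's inequality, you divide after Cauchy--Schwarz; these are equivalent, and as a small aside $D^\zeta G(\xi)\neq 0$ is automatic by strict monotonicity so that case never arises).

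There is, however, a gap in the chaining step as you wrote it. The Lebesgue number $\tau$ for the cover $\{B_{\delta(\eta(t),\ep)/2}(\eta(t))\}_{t\in[0,1]}$ guarantees that a sub-segment $[\eta(t_i),\eta(t_{i+1})]$ of length $<\tau$ lies in \emph{some} ball $B_{\delta(\eta(t^*),\ep)/2}(\eta(t^*))$, but $t^*$ need not equal $t_i$, nor even lie in $[t_i,t_{i+1}]$, and nothing prevents $\delta(\eta(t_i),\ep)$ from being much smaller than $\tau$. So the claim ``the local bound at $\eta(t_i)$ applies with $\zeta=\eta(t_{i+1})-\eta(t_i)$'' is not justified; and if one instead chains through $\eta(t^*)$, the triangle inequality only yields the factor $\Lambda_\ep\bigl(|\eta(t_i)-\eta(t^*)|+|\eta(t_{i+1})-\eta(t^*)|\bigr)$, which exceeds $\Lambda_\ep|\eta(t_{i+1})-\eta(t_i)|$ whenever $t^*\notin[t_i,t_{i+1}]$. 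The standard fix is a continuous-induction argument: set $T=\sup\{t\in[0,1]:\ |G(\eta(s))-G(\eta(0))|\le \Lambda_\ep\,|\eta(s)-\eta(0)|\ \ \forall s\le t\}$, note $T$ is attained by continuity, and use the infinitesimal bound at $\eta(T)$ together with collinearity of $\eta(0),\eta(T),\eta(s)$ for $s>T$ to push $T$ forward, forcing $T=1$. The paper's phrase ``by compactness and infinitesimal Lipschitzness we can find $0=t_0<\cdots<t_N=1$'' is implicitly appealing to this kind of argument; your Lebesgue-number version, taken literally, does not deliver the required subdivision.
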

\begin{proof}[Proof of Lemma~\ref{l:VLambda}]
By definition \eqref{eq:VLambda} of $V_\Lambda$, for any fixed $\xi\in B_{r}(\xi_0)$ and $\delta>0$, if $|\zeta|$ is small enough we have
\begin{align*}
|D^\zeta G(\xi)|^2 
&
\leq (1+\delta)\Lambda
\langle D^\zeta G(\xi),\zeta\rangle
\leq 
\frac 12 |D^\zeta G(\xi)|^2 + \frac 12 (1+\delta)^2\Lambda^2 |\zeta|^2,
\end{align*}
so that, letting $|\zeta|\to 0$ and then $\delta\to 0$ we deduce
\begin{align*}
\limsup_{\zeta\to 0} \frac{|D^\zeta G(\xi)|}{|\zeta|}\leq \Lambda\quad\forall \xi\in B_{r}(\xi_0)\,
.
\end{align*}
This infinitesimal Lipschitz property implies that $G$ is $\Lambda$-Lipschitz in the convex set $B_{r}(\xi_0)$.
Indeed, for $[\xi,\xi +\zeta]\subset B_{r}(\xi_0)$ and $\delta>0$,
by compactness and infinitesimal Lipschitzness we can find $0=t_0<t_1<\cdots <t_N=1$ such that $|G(\xi +t_{j+1}\zeta)-G(\xi+t_j\zeta)|\leq (1+\delta)\Lambda (t_{j+1}-t_j)|\zeta|$, and concatenating these inequalities gives $|D^\zeta G(\xi)|\leq (1+\delta)\Lambda |\zeta|$.
\end{proof}

Finally, we check that solutions of the equation given by the smooth approximating vector fields $G_\e$ are locally uniformly Lipschitz, thanks to the results of \cite{EMT}.

\begin{lemma}\label{l:approxu}
Let $G,G_\e \colon \R^2 \to \R^2 $ be as in Lemma~\ref{l:approxG}, 
and $u$
 a solution of $\dv G(\nabla u)=0$ in $B_1$ with $|\nabla u|\leq M$. 
 For $\e\in (0,1)$, let $u_{\e}$ be the unique smooth solution of the boudary value problem 
\begin{align*}
\dv G_{\e}(\nabla u_{\e} )=0 
\quad \text{in } B_1,
\qquad
u_{\varepsilon}=u  
\quad \text{in } \partial B_1.
\end{align*} 
Then we have
\begin{align*}
\sup_{\e\in (0,1)}\|\nabla u_\e\|_{L^\infty(K)} <\infty\qquad\text{ for all compact }K\subset B_1,
\end{align*}
and $u_{\e} \to u $ locally uniformly in $B_1$, and strongly in $H^1(B_1)$.
\end{lemma}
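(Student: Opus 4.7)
The plan is to establish the four claims in a standard but careful order: (i) existence, uniqueness and smoothness of $u_\e$, (ii) a uniform $H^1(B_1)$ bound, (iii) a uniform local Lipschitz bound, and then (iv) identify the limit and upgrade convergence.

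For (i), for each fixed $\e$ the vector field $G_\e$ is smooth and strongly monotone by Lemma~\ref{l:approxG}, so it is in particular uniformly elliptic and globally Lipschitz with constants that may depend on $\e$. The Minty–Browder theorem provides a unique weak solution $u_\e\in u+H^1_0(B_1)$, and standard Schauder-type bootstrap in the frozen linear equation $\dv(\nabla G_\e(\nabla u_\e)\nabla u_\e)=0$ makes $u_\e$ smooth.

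For (ii), the idea is to test the equation against $u_\e-u\in H^1_0(B_1)$ and exploit the structural bounds from Lemma~\ref{l:approxG} in a non-uniformly-elliptic way. On the set $\{|\nabla u_\e|\leq 7M\}$ the $L^2$ norm is trivially controlled. On $\{|\nabla u_\e|>7M\}$, the bound $\nabla^s G_\e\geq c$ outside $B_{5M}$ (uniform in $\e$) gives, after a line-integration from $\nabla u(x)$ to $\nabla u_\e(x)$, an inequality of the form $\langle G_\e(\nabla u_\e)-G_\e(\nabla u),\nabla u_\e-\nabla u\rangle\geq \tfrac{c}{7}|\nabla u_\e|^2$. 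Combined with the weak formulation, the linear growth $|G_\e(\xi)|\leq 2L(1+|\xi|)$ and Young's inequality yields an $\e$-uniform bound on $\|\nabla u_\e\|_{L^2(B_1)}$.

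For (iii) — the main obstacle — I would invoke the local Lipschitz regularity results of \cite{EMT}. Those results produce quantitative $L^\infty_{\loc}$ estimates for $\nabla u_\e$ in terms of the $H^1$ norm and structural parameters of $G_\e$ (growth, monotony modulus, lower ellipticity bound at infinity), all of which are uniform in $\e$ thanks to the list of properties in Lemma~\ref{l:approxG} ($\omega_{G_\e}\geq\omega_G$, $|G_\e(\xi)|\leq 2L(1+|\xi|)$, $\nabla^s G_\e\geq c$ outside $B_{5M}$). The somewhat delicate point is to verify that the hypotheses of \cite{EMT} are indeed satisfied with $\e$-independent constants; this is the step where most care is needed.

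For (iv), uniform local Lipschitzness together with the uniform $H^1$ bound gives, via Arzelà–Ascoli and Rellich–Kondrachov, subsequential local uniform convergence of $u_\e$ to some $u^*\in H^1(B_1)$, with $u^*-u\in H^1_0(B_1)$. Passing to the limit in the weak formulation via Minty's monotonicity trick — writing $\int\langle G_\e(\nabla u_\e)-G_\e(\nabla \varphi),\nabla u_\e-\nabla\varphi\rangle\geq 0$ for smooth test $\varphi$, using $G_\e\to G$ locally uniformly and $\nabla u_\e\rightharpoonup \nabla u^*$ weakly in $L^2$ — identifies $u^*$ as a weak solution of $\dv G(\nabla u^*)=0$ with boundary trace $u$, and strict monotony of $G$ yields $u^*=u$; in particular the whole sequence converges. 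For the strong $H^1$ convergence, one tests the difference equation with $u_\e-u$ and obtains $\int\omega_G(|\nabla u_\e-\nabla u|)\to 0$ (using $G_\e(\nabla u)\to G(\nabla u)$ uniformly and $\dv G(\nabla u)=0$), hence convergence in measure. Splitting $\int|\nabla u_\e-\nabla u|^2$ exactly as in step (ii) — bounded by $8M$ on $\{|\nabla u_\e|\leq 7M\}$ and controlled by $\tfrac{7}{c}\int\langle G_\e(\nabla u_\e)-G_\e(\nabla u),\nabla u_\e-\nabla u\rangle$ on its complement — then upgrades convergence in measure to strong $L^2$ convergence of gradients and concludes.
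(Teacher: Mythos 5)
Your proof follows essentially the same route as the paper: existence and uniqueness from monotone operator theory, a uniform $H^1(B_1)$ bound by testing against $u_\e-u$, a uniform local Lipschitz bound by invoking \cite{EMT} with $\e$-independent structural constants (the point the paper also only sketches), and a limit identification via monotonicity. Two small remarks are worth recording. First, your quantitative coercivity claim
\begin{align*}
\langle G_\e(\nabla u_\e)-G_\e(\nabla u),\nabla u_\e-\nabla u\rangle\geq \tfrac{c}{7}|\nabla u_\e|^2 \quad\text{on }\lbrace |\nabla u_\e|>7M\rbrace
\end{align*}
does not hold at the stated threshold: along the segment from $\nabla u\in\overline B_M$ to $\nabla u_\e$, the portion lying inside $B_{5M}$ can have length up to $10M$, which for $|\nabla u_\e|$ slightly above $7M$ exceeds the whole segment length, so the line-integration produces no lower bound. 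Raising the threshold to something like $13M$ (or, as the paper does, converting the lower bound into $\langle G_\e(\xi)-G_\e(\zeta),\xi-\zeta\rangle\geq c|\xi-\zeta|^2-D$ with $D$ depending only on $M,L,c$) fixes this at no cost. Second, for the limit identification you apply Minty's trick against smooth test fields, whereas the paper passes to the limit directly in the exact identity $\int\langle G_\e(\nabla u_\e)-G(\nabla u),\nabla u_\e-\nabla u\rangle\,dx=0$ and invokes the Young measure characterization from \cite{Lledos}; the two are equivalent here, and your subsequent argument for upgrading to strong $H^1$ convergence (convergence in measure plus the same $\lbrace|\nabla u_\e|\lessgtr R M\rbrace$ splitting) matches what the Young measure conclusion delivers. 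No conceptual gap, only the numerical slip in the threshold.
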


\begin{proof}[Proof of Lemma~\ref{l:approxu}] 
The existence of a unique solution $u_\e\in H^1(B_1)$ follows from the strict monotony of $G_\e$ and its behavior at infinity, see e.g.
 \cite[\S~9.1]{EvansPDE}.
 Moreover, this solution $u_\e$ is Lipschitz thanks to \cite[Theorem~4.1]{EMT} (applied with $g(t)=t^2$), and therefore smooth since $G_\e$ is smooth.
On each compact $K\subset B_1$, the Lipschitz constant of $u_\e$ provided by \cite[Theorem~4.1]{EMT} depends on the constants $c,L,M$ such that
\begin{align}\label{eq:hypGcLM}
\nabla^s G_\e(\xi) \geq c
\quad \forall \xi \in \R^2\setminus B_{5M},
\quad
| G_\e(\xi)|\leq 2L(1+|\xi|)\quad\forall \xi\in\R^2\, ,
\end{align}
and on the $L^2$ norm of $\nabla u_\e$ in $B_1$.
Therefore, the locally uniform Lipschitz bound will follow from  
\begin{align*}
\sup_{\e\in (0,1)}\int_{B_1}|\nabla u_\e|^2\, dx<\infty.
\end{align*}
From \eqref{eq:hypGcLM} we infer the existence of $D>0$ depending on $M$ and $L$ such that
\[
\langle G_{\e}(\xi)-G_{\e}(\zeta),\xi-\zeta \rangle \geq c \vert \xi-\zeta \vert^2 -D                 
\qquad\forall \xi,\zeta\in\R^2.
\] 
Testing the equation $\dv (G_\e(\nabla u_\e)-G_\e(\nabla u))=\dv(G(\nabla u)-G_\e(\nabla u))$ against $u_\e-u$, we infer
\begin{align*}
c\int_{B_1}|\nabla u_\e-\nabla u|^2\, dx 
&
\leq D +\int_{B_1} \|G_\e-G\|_{L^\infty(B_M)} |\nabla u-\nabla u_\e|\, dx
\\
&
\leq 
D +\frac{\pi M^2}{2c}\|G_\e-G\|_{L^\infty(B_M)}^2 +\frac c 2 \int_{B_1}|\nabla u_\e-\nabla u|^2\, dx,
\end{align*}
which implies the uniform boundedness of $\int_{B_1}|\nabla u_\e-\nabla u|^2\, dx$, 
and therefore of $\int_{B_1}|\nabla u_\e|^2\, dx$ since $u$ is $M$-Lipschitz.

We may extract a subsequence $\e_k\to 0$ such that $u_{\e_k}$ converges locally uniformly in $B_1$ and weakly in $H^1(B_1)$.
Passing to the limit in the identity
\begin{align*}
\int_{B_1}\langle G_\e(\nabla u_\e)-G(\nabla u),\nabla u_\e-\nabla u\rangle\, dx =0,
\end{align*}
using the locally uniform convergence $G_\e\to G$ and the strict monotony of $G$,
we obtain that any Young measure generated by a subsequence of $(\nabla u_\e)$ is concentrated at $\nabla u$, see e.g.  \cite[Lemma~2.8 \& Remark~2.9]{Lledos} for details.
We conclude that $u_\e\to u$ locally uniformly, and strongly in $H^1(B_1)$.
\end{proof}

\section{Degenerate linear Beltrami equations}\label{a:linbeltrami}

In this appendix we prove the assertion claimed in Remark~\ref{r:conjug}.

\begin{proposition}\label{p:linbeltrami}
Let $\mu, \nu \in \mathbb{C}$ such that $ \vert \mu \vert + \vert \nu \vert =1 $.
 Then, for any open set $\Omega\subset \C$,
 the implication
\[
f_{\bar z} = \mu f_z + \nu \overline{f_z} 
\quad\Rightarrow
\quad \re f \text{ or } \im f \text{ is constant} ,\]
is true for all differentiable $f\colon\Omega\to\C$
if and only if $\mu=0$ and $\nu=\pm 1$.
\end{proposition}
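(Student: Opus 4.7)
The plan is to treat the two implications separately. For $(\Leftarrow)$, I would write $f=u+iv$ with $u,v$ real-valued and expand both sides of $f_{\bar z}=\pm\overline{f_z}$ via the standard formulas $2f_z=(u_x+v_y)+i(v_x-u_y)$ and $2f_{\bar z}=(u_x-v_y)+i(v_x+u_y)$. Comparing real and imaginary parts: when $\nu=1$ one obtains $v_y=0$ and $v_x=0$, forcing $\im f$ to be constant; when $\nu=-1$ one obtains $u_x=0$ and $u_y=0$, forcing $\re f$ to be constant.

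For $(\Rightarrow)$ I would argue by contraposition: assuming $|\mu|+|\nu|=1$ and $(\mu,\nu)\neq(0,\pm 1)$, I seek an $\R$-affine solution $f(z)=az+b\bar z$, with $a\neq 0$, for which neither $\re f$ nor $\im f$ is constant. Since $f_z=a$ and $f_{\bar z}=b$, the equation reduces to the scalar identity $b=\mu a+\nu\bar a$, and a direct decomposition in real/imaginary coordinates yields
\begin{align*}
\re f=(a_1+b_1)\,x+(b_2-a_2)\,y,\qquad \im f=(a_2+b_2)\,x+(a_1-b_1)\,y,
\end{align*}
so $\re f$ is constant iff $b=-\bar a$ and $\im f$ is constant iff $b=\bar a$. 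It therefore suffices to find some $a\in\C\setminus\{0\}$ such that $\mu a+\nu\bar a\notin\{\bar a,-\bar a\}$.

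The main obstacle is to verify that the two obstruction sets
\begin{align*}
S_\pm=\bigl\{a\in\C\colon \mu a=(1\mp\nu)\bar a\bigr\}
\end{align*}
do not together exhaust $\C$. Observe that $S_\pm=\C$ happens precisely when $\mu=0$ and $1\mp\nu=0$, i.e., exactly when $(\mu,\nu)=(0,\pm 1)$, which is the excluded case. In every other situation, at least one of the coefficients $\mu$, $1\mp\nu$ is nonzero, so for $a\neq 0$ the equation $\mu a=(1\mp\nu)\bar a$ forces the unimodular ratio $\bar a/a$ to take a single prescribed value in $\C$; hence $S_\pm$ is contained in at most one $\R$-line through the origin (empty unless the modulus condition $|\mu|=|1\mp\nu|$ is met). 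Thus $S_+\cup S_-$ is a proper closed subset of $\C$, and any $a$ outside it yields the required affine counterexample, concluding the proof.
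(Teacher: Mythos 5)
Your proof is correct, and for $(\Rightarrow)$ it takes a genuinely cleaner route than the paper's while arriving at the same kind of counterexample. The $(\Leftarrow)$ direction coincides with the paper: expanding in $u,v$ and reading off $v_x=v_y=0$ (resp.\ $u_x=u_y=0$). For $(\Rightarrow)$, both you and the paper produce affine solutions, but where the paper decomposes the equation into a real $2\times 2$ system with coefficients $1\pm\mu\pm\nu$ and runs a case analysis on which of them vanish, you use the complex ansatz $f(z)=az+b\bar z$, reduce the equation to $b=\mu a+\nu\bar a$, and identify the only failure modes as $b=\pm\bar a$. The key observation that each obstruction set pins down the unimodular ratio $\bar a/a$, hence is contained in a single real line through the origin unless $\mu=0$ and $1\mp\nu=0$, neatly replaces the paper's case analysis and handles complex $\mu,\nu$ transparently. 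One cosmetic slip: from $b=-\bar a$ the correct condition is $\mu a=-(1+\nu)\bar a$, whereas you wrote $S_-=\{a:\mu a=(1+\nu)\bar a\}$; the sign only rotates the line and leaves both the ``at most a line'' conclusion and the criterion $S_-=\C\Leftrightarrow(\mu,\nu)=(0,-1)$ unchanged, so the argument stands.
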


\begin{proof}
With $f=u+iv$, the equation $f_{\bar z} = \mu f_z + \nu \overline{f_z} $ is equivalent to
\begin{equation*}
\begin{cases}
(1- \mu - \nu ) \partial_x u - (1+ \mu + \nu )\partial_y v =0  \\
(1+\mu-\nu)\partial_y u + (1-\mu + \nu)\partial_x v =0.
\end{cases}
\end{equation*}
If $(\mu,\nu)=(0,-1)$ or $(0,+1)$, one directly deduces $\partial_xu=\partial_y u=0$ or $\partial_x v =\partial_y v=0$, so that either $u$ or $v$ is constant.

Conversely, assume that $(\mu,\nu) \notin \lbrace (0,\pm 1)\rbrace$, and let us construct 
an affine solution $f$ such that both $u$ and $v$ are not constant. 
We rewrite the system as
\begin{equation*}
\begin{cases}
a\partial_x u = b \partial_y v , \\
c \partial_y u = d \partial_x v,
\end{cases}
\quad (a,c)\neq (0,0)\text{ and }(b,d) \neq (0,0).
\end{equation*} 
If $a \neq 0$ and $ b \neq 0$, we set $ u(x,y)= \frac{b}{a} x $ and $ v(x,y) = y $. Then $f = u+iv$ satisfies the equation and both $u$ and $v$ are not constant. 

If $a \neq 0$ and $b= 0$, then $d \neq 0$. We have two cases in this situation depending on the value of $c$. If $c=0$, we can take $u(x,y)=v(x,y)=y$ as a solution. If $ c \neq 0$ then, $u(x,y) = \frac{d}{c} y $ and $v(x,y)=x$ is a solution. 

The cases where 
$a=0$ 
  can be dealt with similarly. 
\end{proof}

\end{appendices}

\bibliographystyle{acm}
\bibliography{ref}

\begin{thebibliography}{10}

\bibitem{ADPK11}
{\sc Ambrosio, L., De~Philippis, G., and Kirchheim, B.}
\newblock Regularity of optimal transport maps and partial differential
  inclusions.
\newblock {\em Atti Accad. Naz. Lincei, Cl. Sci. Fis. Mat. Nat., IX. Ser.,
  Rend. Lincei, Mat. Appl. 22}, 3 (2011), 311--336.

\bibitem{ACFJ19}
{\sc Astala, K., Clop, A., Faraco, D., and J{\"a}{\"a}skel{\"a}inen, J.}
\newblock Manifolds of quasiconformal mappings and the nonlinear {Beltrami}
  equation.
\newblock {\em J. Anal. Math. 139}, 1 (2019), 207--238.

\bibitem{ACFJK17}
{\sc Astala, K., Clop, A., Faraco, D., J{\"a}{\"a}skel{\"a}inen, J., and Koski,
  A.}
\newblock Nonlinear {Beltrami} operators, {Schauder} estimates and bounds for
  the {Jacobian}.
\newblock {\em Ann. Inst. Henri Poincar{\'e}, Anal. Non Lin{\'e}aire 34}, 6
  (2017), 1543--1559.

\bibitem{AstalaClop}
{\sc Astala, K., Clop, A., Faraco, D., J{\"a}{\"a}skel{\"a}inen, J., and Koski,
  A.}
\newblock Improved {H{\"o}lder} regularity for strongly elliptic {PDEs}.
\newblock {\em J. Math. Pures Appl. (9) 140\/} (2020), 230--258.

\bibitem{ACFJKM}
{\sc Astala, K., Clop, A., Faraco, D., J{\"a}{\"a}skel{\"a}inen, J., Koski, A.,
  and Martin, G.}
\newblock Laminate and nonlinear autonomous {B}eltrami equations.

\bibitem{ACFJS12}
{\sc Astala, K., Clop, A., Faraco, D., J{\"a}{\"a}skel{\"a}inen, J., and
  Sz{\'e}kelyhidi, L.~j.}
\newblock Uniqueness of normalized homeomorphic solutions to nonlinear
  {Beltrami} equations.
\newblock {\em Int. Math. Res. Not. 2012}, 18 (2012), 4101--4119.

\bibitem{AIM}
{\sc Astala, K., Iwaniec, T., and Martin, G.}
\newblock {\em Elliptic partial differential equations and quasiconformal
  mappings in the plane}, vol.~48 of {\em Princeton Math. Ser.}
\newblock Princeton, NJ: Princeton University Press, 2009.

\bibitem{BB16}
{\sc Bousquet, P., and Brasco, L.}
\newblock Global {Lipschitz} continuity for minima of degenerate problems.
\newblock {\em Math. Ann. 366}, 3-4 (2016), 1403--1450.

\bibitem{BB18}
{\sc Bousquet, P., and Brasco, L.}
\newblock {{\(C^1\)}} regularity of orthotropic {{\(p\)}}-harmonic functions in
  the plane.
\newblock {\em Anal. PDE 11}, 4 (2018), 813--854.

\bibitem{BB20}
{\sc Bousquet, P., and Brasco, L.}
\newblock Lipschitz regularity for orthotropic functionals with nonstandard
  growth conditions.
\newblock {\em Rev. Mat. Iberoam. 36}, 7 (2020), 1989--2032.

\bibitem{BBJ16}
{\sc Bousquet, P., Brasco, L., and Julin, V.}
\newblock Lipschitz regularity for local minimizers of some widely degenerate
  problems.
\newblock {\em Ann. Sc. Norm. Super. Pisa, Cl. Sci. (5) 16}, 4 (2016),
  1235--1274.

\bibitem{BBL24}
{\sc Bousquet, P., Brasco, L., and Leone, C.}
\newblock Singular orthotropic functionals with nonstandard growth conditions.
\newblock {\em Rev. Mat. Iberoam. 40}, 2 (2024), 753--802.

\bibitem{BBLV18}
{\sc Bousquet, P., Brasco, L., Leone, C., and Verde, A.}
\newblock On the {Lipschitz} character of orthotropic {{\(p\)}}-harmonic
  functions.
\newblock {\em Calc. Var. Partial Differ. Equ. 57}, 3 (2018), 1--33.
\newblock Id/No 88.

\bibitem{Ciarlet}
{\sc Ciarlet, P.~G.}
\newblock {\em Linear and nonlinear functional analysis with applications.
  {With} 401 problems and 52 figures}, vol.~130 of {\em Other Titles Appl.
  Math.}
\newblock Philadelphia, PA: Society for Industrial {and} Applied Mathematics
  (SIAM)., 2013.

\bibitem{CF14b}
{\sc Colombo, M., and Figalli, A.}
\newblock An excess-decay result for a class of degenerate elliptic equations.
\newblock {\em Discrete Contin. Dyn. Syst., Ser. S 7}, 4 (2014), 631--652.

\bibitem{CF14a}
{\sc Colombo, M., and Figalli, A.}
\newblock Regularity results for very degenerate elliptic equations.
\newblock {\em J. Math. Pures Appl. (9) 101}, 1 (2014), 94--117.

\bibitem{CT24}
{\sc Colombo, M., and Tione, R.}
\newblock Non-classical solutions of the p-laplace equation.
\newblock {\em J. Eur. Math. Soc.\/} (2024).

\bibitem{DPGT}
{\sc De~Philippis, G., Guerra, A., and Tione, R.}
\newblock Unique continuation for differential inclusions.
\newblock {\em arXiv:2312.05022\/} (2023).

\bibitem{DSS}
{\sc de~Silva, D., and Savin, O.}
\newblock Minimizers of convex functionals arising in random surfaces.
\newblock {\em Duke Math. J. 151}, 3 (2010), 487--532.

\bibitem{EMT}
{\sc Esposito, L., Mingione, G., and Trombetti, C.}
\newblock On the {Lipschitz} regularity for certain elliptic problems.
\newblock {\em Forum Math. 18}, 2 (2006), 263--292.

\bibitem{EvansPDE}
{\sc Evans, L.~C.}
\newblock {\em Partial differential equations}, 2nd ed.~ed., vol.~19 of {\em
  Grad. Stud. Math.}
\newblock Providence, RI: American Mathematical Society (AMS), 2010.

\bibitem{GT}
{\sc Gilbarg, D., and Trudinger, N.~S.}
\newblock {\em Elliptic partial differential equations of second order. 2nd
  ed}, vol.~224 of {\em Grundlehren Math. Wiss.}
\newblock Springer, Cham, 1983.

\bibitem{GSVM}
{\sc Gutlyanskii, V., Martio, O., Sugawa, T., and Vuorinen, M.}
\newblock On the degenerate {Beltrami} equation.
\newblock {\em Trans. Am. Math. Soc. 357}, 3 (2005), 875--900.

\bibitem{GRSS}
{\sc Gutlyanski{\u{\i}}, V., Ryazanov, V., Salimov, R., and Sevost'yanov, E.}
\newblock On divergence-type linear and quasi-linear equations in the complex
  plane.
\newblock {\em J. Math. Sci., New York 279}, 1 (2024), 37--66.

\bibitem{HN59}
{\sc Hartman, P., and Nirenberg, L.}
\newblock On spherical image maps whose {Jacobians} do not change sign.
\newblock {\em Am. J. Math. 81\/} (1959), 901--920.

\bibitem{IMGeom}
{\sc Iwaniec, T., and Martin, G.}
\newblock {\em Geometric function theory and nonlinear analysis}.
\newblock Oxford Math. Monogr. Oxford: Oxford University Press, 2001.

\bibitem{LLP}
{\sc Lamy, X., Lorent, A., and Peng, G.}
\newblock Rigidity of a non-elliptic differential inclusion related to the
  {Aviles}-giga conjecture.
\newblock {\em Arch. Ration. Mech. Anal. 238}, 1 (2020), 383--413.

\bibitem{LLP24}
{\sc Lamy, X., Lorent, A., and Peng, G.}
\newblock On regularity and rigidity of $2\times 2$ differential inclusions
  into non-elliptic curves.
\newblock {\em arXiv:2404.02121\/} (2024).

\bibitem{LR18}
{\sc Lindqvist, P., and Ricciotti, D.}
\newblock Regularity for an anisotropic equation in the plane.
\newblock {\em Nonlinear Anal., Theory Methods Appl., Ser. A, Theory Methods
  177\/} (2018), 628--636.

\bibitem{Lledos}
{\sc Lledos, B.}
\newblock Regularity of the stress field for degenerate and/or singular
  elliptic problems.
\newblock {\em preprint hal-04153354\/} (2023).

\bibitem{LP}
{\sc Lorent, A., and Peng, G.}
\newblock Regularity of the {Eikonal} equation with two vanishing entropies.
\newblock {\em Ann. Inst. Henri Poincar{\'e}, Anal. Non Lin{\'e}aire 35}, 2
  (2018), 481--516.

\bibitem{GMSuper}
{\sc Martin, G.~J.}
\newblock Super regularity for {Beltrami} systems.
\newblock {\em Ann. Fenn. Math. 46}, 1 (2021), 59--65.

\bibitem{MingioneDarkSide}
{\sc Mingione, G.}
\newblock Regularity of minima: an invitation to the dark side of the calculus
  of variations.
\newblock {\em Appl. Math., Praha 51}, 4 (2006), 355--425.

\bibitem{Minty}
{\sc Minty, G.~J.}
\newblock Monotone (nonlinear) operators in {Hilbert} space.
\newblock {\em Duke Math. J. 29\/} (1962), 341--346.

\bibitem{CMCE}
{\sc Mooney, C.}
\newblock Minimizers of convex functionals with small degeneracy set.
\newblock {\em Calc. Var. Partial Differ. Equ. 59}, 2 (2020), 19.
\newblock Id/No 74.

\bibitem{CM}
{\sc Mooney, C.}
\newblock Hilbert's 19th problem revisited.
\newblock {\em Boll. Unione Mat. Ital. 15}, 4 (2022), 483--501.

\bibitem{morrey38}
{\sc Morrey, C. B.~j.}
\newblock On the solutions of quasilinear elliptic partial differential
  equations.
\newblock {\em Trans. Am. Math. Soc. 43\/} (1938), 126--166.

\bibitem{nirenberg53}
{\sc Nirenberg, L.}
\newblock On nonlinear elliptic partial differential equations and {H{\"o}lder}
  continuity.
\newblock {\em Commun. Pure Appl. Math. 6\/} (1953), 103--156.

\bibitem{SV}
{\sc Santambrogio, F., and Vespri, V.}
\newblock Continuity in two dimensions for a very degenerate elliptic equation.
\newblock {\em Nonlinear Anal., Theory Methods Appl., Ser. A, Theory Methods
  73}, 12 (2010), 3832--3841.

\end{thebibliography}

\end{document}